\newtheorem{defn}{Definition}[section]
\newtheorem{theo}[defn]{Theorem}
\newtheorem{lem}[defn]{Lemma}
\newtheorem{prop}[defn]{Proposition}
\newtheorem{cor}[defn]{Corollary}
\newtheorem{rem}[defn]{Remark}
\newtheorem{exam}[defn]{Example}
\newenvironment{proof}{{\bf Proof }}{{\vskip 0.1cm \hfill$\Box$}}
\begin{document} 

\noindent
{\Large\bf On countably skewed Brownian motion with accumulation point}

\bigskip
\noindent
{\bf Youssef Ouknine}
\footnote{The research of Youssef Ouknine was supported by the \lq\lq Mathematics and Applications\rq\rq\ Project of Hassan II Academy of Sciences and Technology.},
{\bf Francesco Russo}
{
\footnote{The research of Francesco Russo was partially supported by the \lq\lq FMJH Program Gaspard Monge in optimization and operation research\rq\rq\ (Project 2014-1607H). 
}, 
{\bf Gerald Trutnau}
{\footnote{The research of Gerald Trutnau was supported by Basic Science Research Program through the National Research Foundation of Korea(NRF) funded by the Ministry of Education, Science and Technology(MEST)(2013028029) and Seoul National University Research Grant 0450-20110022.}
}}\\

\noindent
{\small{\bf Abstract.}  In this work we connect the theory of symmetric Dirichlet forms and direct stochastic calculus to obtain strong existence and pathwise 
uniqueness for Brownian motion that is perturbed by a series of constant multiples of local times at a sequence of points that has exactly one accumulation point in $\mathbb{R}$. The considered process is identified as special 
distorted Brownian motion $X$ in dimension one and is studied thoroughly.  
Besides strong uniqueness, we present necessary and sufficient conditions for non-explosion, recurrence and positive recurrence as well as for $X$ to be semimartingale and possible applications to advection-diffusion in layered media.}\\ 

\noindent
{Mathematics Subject Classification (2010): primary; 31C25, 60J60, 60J55; secondary: 31C15, 60B10.}\\

\noindent 
{Key words: Skew Brownian motion, local time, strong existence, pathwise uniqueness, transience, recurrence, positive recurrence.}

\section{Introduction}\label{one}
In this paper we are concerned with a special distorted Brownian motion in dimension one. 
Distorted BM in dimension $d$ was first introduced in \cite{ahkrs}. It is roughly speaking the Hunt process associated to the regular Dirichlet form
$$
{\cal E}(f,g):=\frac{1}{2}\int_{\mathbb{R}^d} \nabla f\cdot \nabla g \,d\mu, \ \ \ \ f,g\in D({\cal E}),
$$
on $L^2(\mathbb{R}^d;\mu)$, where $\mu$ is a Radon measure on $\mathbb{R}^d$ with full support. If $\mu=\rho\,dx$ is absolutely continuous with respect to the Lebesgue measure $dx$, then  the conditions on $\rho$ for an extension of 
$({\cal E}, C_0^{\infty}(\mathbb{R}^d))\subset D({\cal E}))$ to be associated to a Hunt process are quite weak 
(cf. \cite[Theorem 3.1.6]{fot}). In particular, if the partial derivatives of $\rho$ are absolutely continuous and sufficiently regular, we obtain using integration by parts 
$$
{\cal E}(f,g)=-\int_{\mathbb{R}^d} \left (\frac{1}{2}\Delta f +\frac{\nabla \rho}{2\rho}\cdot \nabla f\right ) g \,d\mu,
$$
and we can see that the process associated to $({\cal E},D({\cal E}))$ is a $d$-dimensional BM with drift $\frac{\nabla \rho}{2\rho}$ (cf. \cite{fuku81}).\\
In this work, we will take a particularly probabilistic viewpoint on distorted BM. For a given a.e. strictly positive nice function $\rho$, distorted BM in dimension one 
with initial condition $x\in \mathbb{R}$ 
may be regarded (whenever it makes sense) as a solution to 
\begin{eqnarray}\label{dist0}
X_t=x + W_t + \int_{\mathbb{R}} \ell_t^a(X)\frac{d\rho(a)}{2\rho(a)},
\end{eqnarray}
where $W$ is a standard BM and $\ell^a(X)$ the symmetric semimartingale local time of $X$ at $a\in \mathbb{R}$. 
(\ref{dist0}) makes in particular sense, when $\rho$ is weakly 
differentiable with derivative $\rho'\in L^1_{loc}(\mathbb{R};dx)$, and $\frac{1}{\rho}$ is not too singular. In this case, we obtain by the occupation times formula  
$$
\int_{\mathbb{R}} \ell_t^a(X)\frac{d\rho(a)}{2\rho(a)}=\int_0^t\frac{\rho'}{2\rho}(X_s)ds,
$$ 
so that (\ref{dist0}) is a BM with logarithmic derivative as a drift. The Bessel processes of dimension $\delta\in (1,2)$ fall into this category with $\rho(x)=|x|^{\delta-1}$. In this paper, however, we will consider a very special $\rho$ whose logarithmic derivative has no absolutely continuous component. More precisely, 
we consider a concrete simple function $\rho$ whose logarithmic derivative is an infinite sum of Dirac measures such that (\ref{dist0}) can be rewritten as
\begin{eqnarray}\label{dist}
X_t=x+W_t+\sum_{k\in \mathbb{Z}}(2\alpha_k-1)\ell_t^{z_k}(X), 
\end{eqnarray}
where $(z_k)_{k\in \mathbb{Z}}$ is an unbounded sequence of real numbers that may have an accumulation point and $\alpha_k\in (0,1)$, $k\in \mathbb{Z}$ are real numbers (see (\ref{distbm(i)}) for the concrete $\rho$).\\ 
To our knowledge, an equation of the form (\ref{dist}) first occurs explicitly in \cite{taka86}, \cite{taka86a} as special one dimensional case. 
There, weak existence and pathwise uniqueness of some multidimensional analogue of (\ref{dist}) with additional diffusion coefficient and absolutely continuous drift was studied. However, \cite{taka86}, \cite{taka86a} do not allow for accumulation points and the one dimensional case is already covered by earlier work of Le Gall \cite{lg84}.
There Le Gall obtained weak existence and pathwise uniqueness of (\ref{dist0}) in a general global setting, 
where $\frac{d\rho(a)}{2\rho(a)}$ is replaced by an arbitrary signed measure $\nu(da)$ with globally bounded total variation and whose absolute value on atoms is strictly less than one (in Le Gall's setting there is also some diffusion coefficient, see (\ref{legall0}) below). Although Le Gall's global condition applied to (\ref{dist}) is equivalent to $\sum_{k\in \mathbb{Z}}|2\alpha_k-1|<\infty$ and is quite strong, it does not exclude the possibility of accumulation points. In case 
$(z_k)_{k\in \mathbb{Z}}$ has no accumulation points and $\sum_{k\in \mathbb{Z}}|2\alpha_k-1|<\infty$, Ramirez considers in  \cite{ram} the pathwise unique solution to (\ref{dist}) of \cite{lg84} as regular diffusion (cf. \cite{ka02}) and presents interesting applications to advection-diffusions in layered media. Our work includes all the mentioned cases. For a more detailed discussion, we refer to the end of Section \ref{3.1}.\\
We discover at least two interesting phenomena that seem to be generic for equations with a drift as in (\ref{dist}) and we fully characterize these. First, a solution to  (\ref{dist}), 
which by definition is a semimartingale and continuous up to infinity (hence non-explosive) may exist, even if $\sum_{\{k\,|\, z_k \in U_0\}}|2\alpha_k-1|=\infty$, where $U_0$ is any neighborhood of the accumulation point (see Remark \ref{counterex} and Example \ref{Bessel}). In fact the semimartingale property is equivalent to $\rho(a)$ being locally of bounded variation (see (\ref{semi1}) below).  
In particular, we are able to consider (\ref{dist0}), even if $\frac{d\rho(a)}{\rho(a)}$ is not locally of bounded total variation. Furthermore, (\ref{dist}) is not automatically non-explosive, i.e. 
a solution to (\ref{dist}) might not exist. We present an example with explosion in finite time where the sequence $(z_k)_{k\in \mathbb{Z}}$ has an accumulation point at infinity, but none in $\mathbb{R}$ (see Example  \ref{counterexample}). In this case, we may nonetheless consider a solution up to lifetime with local times defined in the Dirichlet form sense via the Revuz correspondence (cf. Remark \ref{localtime}). \\
A pathwise unique solution to (\ref{dist}) shall be called countably skewed Brownian motion in order to contrast with the terminology of multi-skewed Brownian motion in \cite{ram} 
when $(z_k)_{k\in \mathbb{Z}}$ has no accumulation point.
For the proof of strong existence and pathwise uniqueness of (\ref{dist}), we need two types of conditions. The first one is the local condition $\sum_{\{k\,|\, z_k \in U_0\}}|2\alpha_k-1|<\infty$ and the second is a 
global condition that ensures non-explosion (see Theorem \ref{legallweaker1} and discussions in Remarks \ref{leGallweaker3} and \ref{conservativeglobalprop} where we relate our work to \cite{lg84}).  
Both conditions are explicit. The local condition is optimal in the sense that it is equivalent to the existence of a nice scale function (see Remark \ref{hbddvarrem}) and the global condition on non-explosion is sharp (see Proposition \ref{conservativenessdir}, Corollary \ref{conservativeness2} and Remark \ref{conservativeglobalprop}). We emphasize that the global conditions (\ref{global1}) and (\ref{global2}) are directly readable from the density $\rho$ in (\ref{gammadef}) of the underlying Dirichlet form $({\cal E}, D({\cal E}))$ determined by (\ref{closureDF}). 
In fact, the construction of a solution to (\ref{dist}) is performed via Dirichlet form theory. The key point is to identify (\ref{dist}) as distorted BM. 
This is done in Proposition \ref{legallweaker0} where starting from (\ref{dist}), the density $\rho$ for which (\ref{dist}) is a distorted BM with respect to 
the Dirichlet form given by (\ref{closureDF}) is determined. The identification of the distorted BM (\ref{weak1}) in Theorem \ref{semimart} with a solution to  equation (\ref{dist}) (see (\ref{weak3}) in Corollary \ref{countablyskewbm}) is done with the help of (\ref{localtime(ii)}). Note that the approach through distorted BM, i.e. through the process associated to the Dirichlet form (\ref{closureDF}), is more general than the approach through (\ref{dist}), since distorted BM does not need to be semimartingale. 
Necessary and sufficient conditions for the latter are presented in Theorem \ref{semimart}. \\
In addition to the above mentioned results, we present necessary and sufficient conditions for transience, recurrence and positive recurrence, as well as a sufficient condition for the existence of a unique invariant distribution (see Theorem \ref{recurrence}, Theorem \ref{recurrence2} and Corollary \ref{uniqueinvariantdist}). These results are quite standard from the existence of a scale function $h$ as in Remark \ref{hbddvarrem} and similar results were also presented in \cite{ram}. However, we insist on explicitly pointing out that in each of these statements, additionally to the statements corresponding to the scale function, an equivalent condition for the Dirichlet form (\ref{closureDF}) is presented. This 
underlines our bidirectional approach. \\
In section \ref{33}, we use the theory of generalized Dirichlet forms as  applied in \cite{St1}, as well as the results of this work to propose a generalization for the longitudinal and transversal directions of advection-diffusion in layered media considered in \cite{ram} and \cite{ram2} 
(see Remark \ref{advection}). \\
Finally, we want to say a few words on skew reflected diffusions and corresponding uniqueness results. If all $\alpha_k$ except $\alpha:=\alpha_1$ are $\frac12$ in (\ref{dist}) and $z_1=0$, then $X$ is called the $\alpha$-skew Brownian motion (see Remark \ref{skewbm}). It was first considered by It\^o and McKean  (see e.g. \cite[Section 4.2, Problem 1]{itomckean}) and strong uniqueness was derived in \cite{hs}. Skew reflected diffusions and strong uniqueness results have been considered by many authors then. Additionally to \cite{taka86, taka86a, lg84} the existence and uniqueness results of \cite{es}, \cite{rut}, \cite{rut90} and \cite{bc2} are particularly close to ours. A survey on skew reflected diffusions is given in \cite{lejay}.

\section{Construction and basic properties of a countably skew reflected Brownian motion}\label{two}
In this section, we first construct a countably skew reflected Brownian motion by Dirichlet form methods. As a byproduct of the construction 
method, its basic properties like diffusion and semimartingale property as well as the explicit form of SDE that it solves are directly readable 
from the density $\rho$ of the Dirichlet form. Besides in remarks and examples, we point out some remarkable features of the constructed process.\\
 We consider two sequences of real numbers $(l_k)_{k\in \mathbb{Z}}$ and  $(r_k)_{k\in \mathbb{Z}}$ such that
$$
l_k< l_{k+1}<0 < r_k < r_{k+1}, \ \ \ \forall k\in \mathbb{Z}, 
$$
$$
\mbox{and} \ \lim_{k\to \infty}l_k = 0 =\lim_{k\to -\infty}r_k.
$$
We suppose further, that zero is the sole accumulation point of the sequences $(l_k)_{k\in \mathbb{Z}}$, $(r_k)_{k\in \mathbb{Z}}$. In particular 
$$
\lim_{k\to -\infty}l_k=-\infty \ \mbox{ and } \lim_{k\to \infty}r_k=\infty.
$$
Let $(\gamma_k)_{k\in \mathbb{Z}}$ and  $(\overline{\gamma}_k)_{k\in \mathbb{Z}}$ be another two sequences of arbitrary, but strictly positive real numbers. Let
\begin{eqnarray}\label{gammadef}
\rho(x):=\sum_{k\in \mathbb{Z}}\left \{\gamma_{k+1}1_{(l_k,l_{k+1})}+\overline{\gamma}_{k+1}1_{(r_k,r_{k+1})} \right \}(x),
\end{eqnarray}
where $1_A$ is the indicator function of the set $A$ and $(a,b)$ the open interval from $a$ to $b$. Since $\rho$ appears as density to the Lebesgue measure we do not have to care about the values of $\rho$ at the boundary points $l_k, r_k, 0$ at the moment. 
However, whenever we have to choose a pointwise version, we will choose its \lq\lq symmetric\rq\rq\ version $\tilde{\rho}$ (cf. (\ref{distbm(i)}) below).
With the sole exception of Proposition \ref{legallweaker0}, we always assume that
$$
\rho\in L^1_{loc}(\mathbb{R};dx).
$$
Then $\rho dx$ is a positive Radon measure and the bilinear form 
\begin{eqnarray}\label{closureDF}
{\cal E}(f,g):=\frac{1}{2}\int_{\mathbb{R}}f'(x)g'(x)\rho(x)dx, \ \ \ f,g\in C_0^{\infty}(\mathbb{R}),
\end{eqnarray}
is well defined. Here  $C_0^{\infty}(\mathbb{R})$ denotes the space of infinitely often continuously differentiable functions with compact support 
and $f'$ denotes the derivative of $f$. By definition of the sequences $(\gamma_k)_{k\in \mathbb{Z}}$ and  $(\overline{\gamma}_k)_{k\in \mathbb{Z}}$, $\rho$ 
is bounded above and below away from zero on each compact subset of $\mathbb{R}\setminus \{0\}$. Thus each point of $\mathbb{R}\setminus \{0\}$ is a regular point for $\rho$, 
and so by the results of \cite[Theorem 3.1.6]{fot} $({\cal E}, C_0^{\infty}(\mathbb{R}))$ 
is closable in $L^2(\mathbb{R};\rho dx)$. The closure $({\cal E}, D({\cal E}))$ is a regular symmetric Dirichlet form (see \cite[pages 3-6]{fot}). 
Indeed, the regularity, i.e. that $C_0(\mathbb{R})\cap D({\cal E})$ is dense both in the continuous functions with compact support $C_0(\mathbb{R})$ 
and in $D({\cal E})$, just follows from the fact that we constructed $({\cal E}, D({\cal E}))$ as the closure of $({\cal E}, C_0^{\infty}(\mathbb{R}))$. 
The submarkovian property of $({\cal E}, D({\cal E}))$ (called Markovian property in \cite{fot}) follows easily by showing \cite[(1.1.6)]{fot} with 
the help of a mollifier as in \cite[Problem 1.2.1]{fot}. \\
Let $(T_t)_{t\ge 0}$ be the strongly continuous submarkovian (called Markovian in \cite{fot}) contraction semigroup on $L^2(\mathbb{R};\rho dx)$ that is associated to $({\cal E}, D({\cal E}))$ (see \cite[Section 1.3]{fot}). 
By general Dirichlet form theory (see \cite[Chapter 7]{fot}) there exists a Hunt process with life time $\zeta$ and cemetery $\Delta$
$$
((X_t)_{t\ge 0}, {\cal F}, ({\cal F}_t)_{t\ge 0}, \zeta, (P_x)_{x\in \mathbb{R}\cup \{\Delta\}})
$$ 
such that $x\mapsto E_x[f(X_t)]$ is a quasi-continuous $dx$-version of $T_t f$ for any (Borel measurable) $f\in L^2(\mathbb{R};\rho dx)$, and $E_x$ denotes the expectation with respect to $P_x$. \\ \\
The semigroup $(T_t)_{t\ge 0}$ can be regarded as a semigroup on $L^{\infty}(\mathbb{R};\rho dx)$ (see \cite[p. 49]{fot}). 
Then $({\cal E}, D({\cal E}))$ is called {\it conservative}, if $T_t 1_{\mathbb{R}}(x)=1$ for $dx$-a.e. $x\in \mathbb{R}$ and any $t\ge 0$. 
{\bf Throughout this section} (but see Remark \ref{localtime}), {\bf we assume} 
\begin{itemize}
	\item[(H0)] $({\cal E}, D({\cal E}))$ is conservative.
\end{itemize}
For instance, if 
$$
\int_{1}^{\infty} \frac{r}{\log v(r)}dr=+\infty, 
$$
where $v(r):=\int_{B_r(0)}\rho(x)dx$, or if there exists some $T>0$  such that for any $R>0$ 
$$
\liminf_{r\to \infty}\frac{e^{-\frac{r^2}{2T}}}{r}\int_{\{|x|<\sqrt{e^{R+r}-1}\}}\rho(x)dx = 0,
$$
then $({\cal E}, D({\cal E}))$ is conservative (see \cite[Theorem 4]{stu} and \cite[Theorem 2.2]{Ta89}). 
Necessary and sufficient conditions are presented a posteriori in Proposition \ref{conservativenessdir}, Corollary \ref{conservativeness2} and Remark \ref{conservativeglobalprop}, in case (\ref{semi1}) and (\ref{hbddvargamma}) below hold.\\ \\
Let $\mbox{cap}$ be the capacity related to $({\cal E}, D({\cal E}))$ as defined in \cite[p.64]{fot}. Since $C_0^{\infty}(\mathbb{R})$ is a special 
standard core for $({\cal E}, D({\cal E}))$, and $({\cal E}, D({\cal E}))$ is  except in zero locally comparable with the Dirichlet form $\frac{1}{2}\int_{\mathbb{R}}f'(x)g'(x) dx$, $f,g\in H^{1,2}(\mathbb{R}):=\{f\in L^2(\mathbb{R};dx)\,|\, f'\in L^2(\mathbb{R}; dx)\}$ of Brownian motion, it follows from \cite[Lemma 2.2.7 (ii), and Theorem 4.4.3]{fot} that 
$$
\mbox{cap}(\{x\})>0\ \mbox{ for any }\ x\in \mathbb{R}\setminus\{0\}. 
$$
We will consider the following assumption on $({\cal E}, D({\cal E}))$:
\begin{itemize}
	\item[(H1)] $\mbox{cap}(\{0\})>0$.
\end{itemize}
\begin{rem}\label{caph1}
(H1) holds if for instance $\exists\lim_{k\to\infty}\gamma_k,\lim_{k\to-\infty}\overline{\gamma}_k>0$. In this case $\rho$ is locally bounded away from zero and above. Therefore, the Dirichlet norm of $({\cal E}, D({\cal E}))$ is  
(everywhere) locally comparable with the Dirichlet norm of $\frac{1}{2}\int_{\mathbb{R}}f'(x)g'(x) dx$ on $H^{1,2}(\mathbb{R})$, which is as we remarked before the Dirichlet form of Brownian motion. $({\cal E}, D({\cal E}))$ has hence the same exceptional sets as Brownian motion, i.e. (H1) holds (see \cite[Lemma 2.2.7(ii) and Theorem 4.4.3]{fot}). 
\end{rem}
\begin{prop}\label{consdiff}
Under (H1), the Hunt process $((X_t)_{t\ge 0}, {\cal F}, ({\cal F}_t)_{t\ge 0}, \zeta, (P_x)_{x\in \mathbb{R}\cup \{\Delta\}})$ associated to 
$({\cal E}, D({\cal E}))$ is a conservative diffusion, i.e. we have:
\begin{itemize}
	\item[(i)] The process has infinite life time, namely
$$
P_x[\zeta=\infty]=P_x[X_t\in \mathbb{R}, \forall t\ge 0]=1\ \mbox{ for all }\ x\in \mathbb{R}.
$$
\item[(ii)] The process is a diffusion, namely
$$
P_x[t\mapsto X_t\ \mbox{ is continuous on }\ [0,\infty)]=1\ \mbox{ for all }\ x\in \mathbb{R}.
$$
\end{itemize}
If (H1) does not hold, then in general (i), (ii) are only valid for all $x\in \mathbb{R}\setminus\{0\}$.
\end{prop}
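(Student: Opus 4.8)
The plan is to obtain the diffusion property (ii) from the strong locality of $({\cal E},D({\cal E}))$, the non-explosion (i) from (H0), and then to use (H1) to upgrade both statements from quasi-everywhere to everywhere.

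First I would check that $({\cal E},C_0^\infty(\mathbb{R}))$ is strongly local: if $f,g\in C_0^\infty(\mathbb{R})$ and $f$ is constant on a neighbourhood of $\mathrm{supp}(g)$, then $f'\equiv 0$ on $\mathrm{supp}(g)$ and hence ${\cal E}(f,g)=\frac{1}{2}\int_{\mathbb{R}}f'g'\rho\,dx=0$; this property is inherited by the closure, so $({\cal E},D({\cal E}))$ is a regular, strongly local (in particular local) symmetric Dirichlet form. By \cite[Theorem 4.5.1]{fot}, the associated Hunt process is a diffusion off a properly exceptional set $N$, i.e. $P_x[t\mapsto X_t\text{ is continuous on }[0,\zeta)]=1$ for every $x\in\mathbb{R}\setminus N$.

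Next, (H0) says that $T_t 1_{\mathbb{R}}=1$ $dx$-a.e. for every $t\ge 0$. Approximating $1_{\mathbb{R}}$ monotonically by $1_{[-n,n]}\in L^2(\mathbb{R};\rho\,dx)$, using that $x\mapsto E_x[1_{[-n,n]}(X_t);t<\zeta]$ is a quasi-continuous $dx$-version of $T_t 1_{[-n,n]}$, and passing to the monotone limit, one gets that $x\mapsto P_x[\zeta>t]$ is quasi-continuous and equal to $1$ $dx$-a.e.; since two quasi-continuous functions agreeing $dx$-a.e. agree q.e., it follows that $P_x[\zeta>t]=1$ for q.e. $x$, for every $t\ge 0$, whence, letting $t\to\infty$ along the integers, $P_x[\zeta=\infty]=1$ for every $x$ outside some properly exceptional set $N'$. (This is just the standard probabilistic reformulation of conservativeness, and a reference to \cite{fot} may be substituted for the above sketch.)

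Finally, I would enlarge the exceptional set to $N\cup N'$, which is again properly exceptional, hence of zero capacity. Under (H1) this set must be empty: by the discussion preceding (H1) one has $\mathrm{cap}(\{x\})>0$ for every $x\in\mathbb{R}\setminus\{0\}$, and $\mathrm{cap}(\{0\})>0$ is precisely (H1), while by monotonicity of the capacity any set containing a point of positive capacity cannot have zero capacity. Hence $N\cup N'=\emptyset$, so the path-continuity on $[0,\zeta)$ and the identity $P_x[\zeta=\infty]=1$ hold for \emph{every} $x\in\mathbb{R}$, and combining the two gives continuity on all of $[0,\infty)$; this is (i) and (ii). If (H1) is dropped, the identical reasoning only forces $N\cup N'\subset\{0\}$, so (i), (ii) are then guaranteed only for every $x\in\mathbb{R}\setminus\{0\}$. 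The one genuinely delicate point is this last upgrade from q.e. to everywhere: it rests entirely on the strict positivity of the capacity of \emph{all} singletons (which is exactly where (H1) enters) together with the fact that the diffusion and conservativeness assertions of \cite{fot} hold modulo a properly exceptional set that is then recognized to be void.
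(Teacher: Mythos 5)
Your proposal is correct and follows essentially the same route as the paper: strong locality plus \cite[Theorem 4.5.1]{fot} for the diffusion property off an exceptional set, (H0) for conservativeness off an exceptional set, and then the observation that $\{0\}$ is the only nonempty set that can have zero capacity, so that (H1) forces the exceptional set to be empty. The one place you diverge is the upgrade of conservativeness from $dx$-a.e.\ to quasi-everywhere: the paper forms the single $1$-excessive function $R_1 1_{\mathbb{R}}(x)=\int_0^\infty e^{-t}P_x[X_t\in\mathbb{R}]\,dt$, which equals $1$ a.e.\ by (H0) and hence equals $1$ q.e.\ by excessivity, whereas your step \lq\lq passing to the monotone limit, one gets that $x\mapsto P_x[\zeta>t]$ is quasi-continuous\rq\rq\ is not literally justified, since an increasing limit of quasi-continuous functions need not be quasi-continuous. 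The intended conclusion is standard and you flag that a reference can be substituted, so this is a presentational imprecision rather than a gap, but the resolvent/excessivity argument is the cleaner way to make it rigorous.
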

\begin{proof}
We have $P_{\cdot}[X_t\in \mathbb{R}]=T_t 1_{\mathbb{R}}$ $dx$-a.e. Thus by (H0) $R_1 1_{\mathbb{R}}(x):=\int_0^{\infty}e^{-t}P_x[X_t\in \mathbb{R}]dt=1$ for $dx$-a.e. $x\in\mathbb{R}$. Since 
$R_1 1_{\mathbb{R}}$ is $1$-excessive, it follows that $R_1 1_{\mathbb{R}}(x)=1$ for all $x\in \mathbb{R}\setminus N_1$, where 
$N_1\subset\mathbb{R}$ satisfies $\mbox{cap}(N_1)=0$. It follows $P_x[\zeta=\infty]=P_x[X_t\in \mathbb{R}, \forall t\ge 0]=1$  for all $x\in \mathbb{R}\setminus N_1$.\\ 
Since $({\cal E}, D({\cal E}))$ is {\it (strongly) local} we obtain by \cite[Theorem 4.5.1 (ii)]{fot} that 
$$
P_x[t\mapsto X_t\ \mbox{ is continuous on }\ [0,\zeta)]=1\ \mbox{ for all }\ x\in \mathbb{R}\setminus N_2
$$
where $N_2\subset\mathbb{R}$ satisfies $\mbox{cap}(N_2)=0$. By considering $N:=N_1\cup N_2$ if necessary, we may assume that $N:=N_1=N_2$. Since $\{0\}$ is the only non-trivial subset of $\mathbb{R}$ which might have zero capacity we obtain that $((X_t)_{t\ge 0}, (P_x)_{x\in \mathbb{R}})$ is a conservative diffusion for any 
$x\in \mathbb{R}\setminus\{0\}$, and under (H1) for any $x\in \mathbb{R}$.
\end{proof}\\ \\
Next, we want to identify the stochastic differential equation verified by $(X_t)_{t\ge 0}$. In Dirichlet form theory this is done via the Fukushima decomposition 
for $(X_t)_{t\ge 0}$ in the following way:\\
Let $(L,D(L))$ denote the self-adjoint generator of $({\cal E}, D({\cal E}))$ (cf. \cite[Section 1.3]{fot}). Then
\begin{eqnarray}\label{id2afirst}
-{\cal E}(f,g)= \int_{\mathbb{R}} Lf\cdot g \,\rho\, dx, \ f\in D(L), g\in D({\cal E}).
\end{eqnarray}
Now, in order to identify the drift of $(X_t)_{t\ge 0}$, we have to evaluate $L$ at the identity map which is (typically) 
even not locally in $D(L)$. 
However, the identity map is (typically) locally in $D({\cal E})$ and so the left hand side of (\ref{id2afirst}) can be evaluated. 
The drift is then identified with a signed Radon measure (cf. (\ref{id2a})). If this measure is smooth in the sense of \cite{fot} 
then it corresponds uniquely to a continuous additive functional via the Revuz correspondence (cf. \cite[Theorem 5.1.4]{fot}). 
This additive functional it then the drift part of $(X_t)_{t\ge 0}$. The identification of the local martingale part in Fukushima's 
decomposition is particularly easy in our situation. Since its quadratic variation 
is related to the energy $\cal{E}$ one obtains
$$
(\langle M^{[id]} \rangle_t)_{t\ge 0} \leftrightarrow (id')^2 \rho dx=\rho dx
$$
and the Revuz measure of $A_t \equiv t$ is $\rho dx$ (by a straightforward calculation). So by uniqueness of the Revuz correspondence $\langle M^{[id]} \rangle_t\equiv t$.
\begin{theo}\label{semimart}
(i) Under (H1), the family $\mathbb{M}:=((X_t)_{t\ge 0}, (P_x)_{x\in \mathbb{R}})$ associated to 
$({\cal E}, D({\cal E}))$ satisfies: for $x\in \mathbb{R}$, $((X_t)_{t\ge 0}, P_x)$ is a semimartingale, if and only if
\begin{equation}\label{semi1}\tag{S0}
\sum_{k\le 0}|\overline{\gamma}_{k+1}-\overline{\gamma}_{k}|+\sum_{k\ge 0}|\gamma_{k+1}-\gamma_{k}|<\infty.
\end{equation}
In particular (\ref{semi1}) implies 
\begin{eqnarray}\label{semi2}
\exists \lim_{k\to\infty}\gamma_k=:\gamma\in [0,\infty) \ \mbox{ and }\ \exists \lim_{k\to-\infty}\overline{\gamma}_k=:\overline{\gamma}\in [0,\infty).
\end{eqnarray}
(ii) If (H1) does not hold, then $\mathbb{M}$ is (always) a semimartingale for all $x\in \mathbb{R}\setminus \{0\}$.\\
(iii) Suppose (H1) and (\ref{semi1}) hold. Then we have for all $x\in \mathbb{R}$  
\begin{eqnarray}\label{weak1}
X_t=x+W_t+ \sum_{k\in \mathbb{Z}}\left \{\frac{\gamma_{k+1}-\gamma_{k}}{2}\ell_t^{l_k}+\frac{\overline{\gamma}_{k+1}-\overline{\gamma}_{k}}{2}\ell_t^{r_k}\right \}+
\frac{\overline{\gamma}-\gamma}{2}\ell^0_t, \ t\ge 0,\ P_x\mbox{-a.s.}
\end{eqnarray}
where $(\ell^a_t)_{t \ge 0}$ is the unique positive continuous additive functional (PCAF) of $\mathbb{M}$ (cf. \cite[Chapter 5.1]{fot}) 
that is associated via the Revuz correspondence (cf. \cite[Theorem 5.1.3]{fot}) to the smooth measure 
$\delta_a$, $a\in \mathbb{R}$, and $((W_t)_{t\ge 0}, ({\cal F}_t)_{t\ge 0}, P_x)$ is a Brownian motion starting from zero for all $x\in \mathbb{R}$.\\
(iv) If (H1) does not hold, then 
\begin{eqnarray}\label{weak2}
X_t=x+W_t+ \sum_{k\in \mathbb{Z}}\left \{\frac{\gamma_{k+1}-\gamma_{k}}{2}\ell_t^{l_k}+\frac{\overline{\gamma}_{k+1}-\overline{\gamma}_{k}}{2}\ell_t^{r_k}\right \}, \ t\ge 0,\ P_x\mbox{-a.s.}
\end{eqnarray}
for all $x\in \mathbb{R}\setminus \{0\}$.
\end{theo}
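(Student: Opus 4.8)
The plan is to read off both the semimartingale property and the SDE from the Fukushima decomposition of the identity map $\mathrm{id}(x)=x$, which is exactly the object set up in the discussion preceding the theorem. Although $\mathrm{id}\notin D({\cal E})$, it belongs to $D({\cal E})$ locally, because $\rho\in L^1_{loc}(\mathbb{R};dx)$ gives $\int_K(\mathrm{id}')^2\rho\,dx=\int_K\rho\,dx<\infty$ on each compact $K$. Hence, by the strong local Fukushima decomposition (\cite[Chapter 5]{fot}) together with Proposition \ref{consdiff} (which also gives that there are no exceptional points under (H1), and that only $\{0\}$ may be exceptional otherwise), one has for all $x\in\mathbb{R}$ under (H1), and for all $x\in\mathbb{R}\setminus\{0\}$ without (H1),
\[
X_t-x=M_t^{[\mathrm{id}]}+N_t^{[\mathrm{id}]},\qquad t\ge 0,\ P_x\mbox{-a.s.},
\]
where $M^{[\mathrm{id}]}$ is a martingale additive functional and $N^{[\mathrm{id}]}$ a continuous additive functional of zero energy. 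As observed before the theorem, $\langle M^{[\mathrm{id}]}\rangle_t\equiv t$, so by L\'evy's characterization $W:=M^{[\mathrm{id}]}$ is, under every $P_x$, a Brownian motion starting at $0$. Since $X$ is a Dirichlet process whose continuous martingale part is exactly $W$, it follows that $((X_t),P_x)$ is a semimartingale \emph{if and only if} $N^{[\mathrm{id}]}$ is of bounded variation on compact time intervals, in which case $X=x+W+N^{[\mathrm{id}]}$ is its canonical decomposition. Everything thus reduces to deciding when $N^{[\mathrm{id}]}$ has bounded variation and, in that case, to computing it.

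For this I would evaluate $-{\cal E}(\mathrm{id},\cdot)$: for $g\in C_0^\infty(\mathbb{R})$, since $\mathrm{id}'\equiv 1$,
\[
-{\cal E}(\mathrm{id},g)=-\tfrac12\int_{\mathbb{R}}g'(x)\rho(x)\,dx=\tfrac12\langle\rho',g\rangle ,
\]
the distributional derivative of the step function $\rho$ of (\ref{gammadef}). Replacing $\mathrm{id}$ by $u_n\in D({\cal E})$ with $u_n=\mathrm{id}$ on $(-n,n)$ (so ${\cal E}(u_n,g)={\cal E}(\mathrm{id},g)$ whenever $\mathrm{supp}\,g\subset(-n,n)$) and using the correspondence between continuous additive functionals of bounded variation and signed smooth measures via the Revuz correspondence (\cite[Chapter 5]{fot}), one gets: $N^{[\mathrm{id}]}$ is of bounded variation $\iff$ $\rho'$ is (realizable as) a signed smooth measure, and then $N^{[\mathrm{id}]}=A^{\rho'/2}$. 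Now $\rho$ jumps only at the $l_k$, $r_k$ and possibly at the accumulation point $0$; on each compact subset of $\mathbb{R}\setminus\{0\}$ only finitely many jumps occur, so $\rho'$ is always a Radon measure there, namely $\sum_k(\gamma_{k+1}-\gamma_k)\delta_{l_k}+\sum_k(\overline{\gamma}_{k+1}-\overline{\gamma}_k)\delta_{r_k}$. It extends to a locally finite measure on all of $\mathbb{R}$ precisely when $\rho$ is of finite variation near $0$, i.e. when $\sum_{k\ge 0}|\gamma_{k+1}-\gamma_k|+\sum_{k\le 0}|\overline{\gamma}_{k+1}-\overline{\gamma}_k|<\infty$, which is (\ref{semi1}); this forces $(\gamma_k)$ and $(\overline{\gamma}_k)$ to be Cauchy, giving (\ref{semi2}) and a jump $\overline{\gamma}-\gamma$ of $\rho$ at $0$, hence $\rho'=\sum_k(\gamma_{k+1}-\gamma_k)\delta_{l_k}+\sum_k(\overline{\gamma}_{k+1}-\overline{\gamma}_k)\delta_{r_k}+(\overline{\gamma}-\gamma)\delta_0$.

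Assembling the pieces: for (i) ``only if'', a semimartingale $X$ has $N^{[\mathrm{id}]}$ of bounded variation, hence $\rho'$ is a signed smooth measure, hence locally finite at $0$, i.e.\ (\ref{semi1}) holds. For (i) ``if'' and (iii), assume (H1) and (\ref{semi1}); then $\tfrac12\rho'$ is a locally finite signed measure charging only points of strictly positive capacity (the $l_k$, $r_k$ by the discussion preceding (H1), and $0$ by (H1)), hence a difference of two smooth measures, so $N^{[\mathrm{id}]}=A^{\rho'/2}$ is of bounded variation and $X$ is a semimartingale. Since $A^{\delta_a}=\ell^a$ by definition of $\ell^a$, $\sigma$-additivity of the Revuz correspondence, applied to the positive and negative parts with monotone convergence of the partial-sum PCAFs, identifies $N^{[\mathrm{id}]}_t$ with the drift series in (\ref{weak1}). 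For (ii) and (iv), if (H1) fails then $\mathrm{cap}(\{0\})=0$, so $\{0\}$ is polar; hence $X$ started from $x\in\mathbb{R}\setminus\{0\}$ never reaches $0$ and lives on $\mathbb{R}\setminus\{0\}$, where $\rho'$ is always a Radon measure (now with no $\delta_0$-term); therefore $N^{[\mathrm{id}]}$ is \emph{always} of bounded variation and coincides with the series in (\ref{weak2}), so $X$ is always a semimartingale there.

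The main obstacle will be the direction ``$X$ semimartingale $\Rightarrow$ (\ref{semi1})'' under (H1): one must show that if $\rho$ has infinite variation near $0$ then $\rho'$ cannot be represented by any signed smooth measure in a neighbourhood of $0$. The obstruction is that $\mathrm{cap}(\{0\})>0$ forbids removing an entire neighbourhood of the accumulation point when building a defining nest, which forces any candidate Revuz measure there to have infinite total variation; equivalently, on the event $\{\ell^0_t>0\}$, of positive probability under (H1), the total variation of the candidate drift behaves like $\ell^0_t\cdot\bigl(\sum_{k\ge 0}\tfrac{|\gamma_{k+1}-\gamma_k|}{2}+\cdots\bigr)=\infty$, contradicting bounded variation. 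Making this quantitative, and likewise establishing the $P_x$-a.s.\ convergence of the local-time series in (\ref{weak1}) uniformly near the accumulation point, are the delicate points; the rest is bookkeeping of the (H1)/not dichotomy together with standard Dirichlet-form machinery.
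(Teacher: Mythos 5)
Your proposal follows essentially the same route as the paper's proof: compute $-{\cal E}(id,v)$ to identify the drift measure $\nu=\tfrac12\rho'$ as a sum of weighted Dirac masses at the $l_k$, $r_k$ and $0$, invoke the equivalence (via \cite[Theorem 5.5.4]{fot}) between the semimartingale property of the Dirichlet process and $\nu$ being a signed smooth measure, and then use the Revuz correspondence $A^{\delta_a}=\ell^a$ together with $\langle M^{[id]}\rangle_t=t$ to read off (\ref{weak1}) and (\ref{weak2}). The one point you flag as a remaining obstacle --- the implication \lq\lq semimartingale $\Rightarrow$ (\ref{semi1})\rq\rq\ --- requires no quantitative local-time estimate: under (H1) every nonempty set has positive capacity, so signed smooth measures coincide with signed Radon measures, and local finiteness of $|\nu|$ near the accumulation point $0$ is literally condition (\ref{semi1}); this is exactly how the paper closes that direction.
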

\begin{proof}
(i) Let $id(x):=x$ for $x\in\mathbb{R}$. Then $id\in D({\cal E})_{loc}$ (cf \cite[p. 117]{fot} for the definition), and for any $v\in C_0^{\infty}(\mathbb{R})$ we calculate
\begin{eqnarray}\label{id1}
-{\cal E}(id,v) & = & \lim_{k\to\infty}\left \{-\frac{1}{2}\int_{-\infty}^{l_{k+1}}v'(x)\rho(x)dx-\frac{1}{2}\int^{\infty}_{r_{-k}}v'(x)\rho(x)dx\right \} \nonumber \\ 
& = &  \lim_{n\to\infty} \{\sum_{k\le n}\frac{\gamma_{k+1}-\gamma_{k}}{2}\int_{\mathbb{R}}v(x) \delta_{l_k}(dx) -\frac{\gamma_{n+1}}{2}\int_{\mathbb{R}}v(x)\delta_{l_{n+1}}(dx) \nonumber \\
&& + \sum_{k\ge -n}\frac{\overline{\gamma}_{k+2}-\overline{\gamma}_{k+1}}{2}\int_{\mathbb{R}}v(x) \delta_{r_{k+1}}(dx) +\frac{\overline{\gamma}_{-n+1}}{2}\int_{\mathbb{R}}v(x)\delta_{r_{-n}}(dx) \}  \\ \nonumber
\end{eqnarray}
where $\delta_x$ denotes the Dirac measure in $x\in\mathbb{R}$. Thus in an informal way, we can write
\begin{eqnarray}\label{id2a}
-{\cal E}(id,v) =\int_{\mathbb{R}}v(x)\nu(dx),
\end{eqnarray}
with
\begin{eqnarray}\label{id2}
\nu=\sum_{k\in \mathbb{Z}}\left \{\frac{\gamma_{k+1}-\gamma_{k}}{2}\delta_{l_k}+\frac{\overline{\gamma}_{k+1}-\overline{\gamma}_{k}}{2}\delta_{r_k} \right \}+
\frac{\lim_{k\to-\infty}\overline{\gamma}_k-\lim_{k\to\infty}\gamma_k}{2}\delta_0,
\end{eqnarray}
Under (H1) the notion of smooth measure is equivalent to the notion of Radon measure, 
i.e. a positive measure on $\mathbb{R}$ is smooth, if and only if it is locally finite in $\mathbb{R}$. 
Thus $\nu$ in (\ref{id2}) is a signed smooth measure, if and only if its positive and negative parts are locally finite in $\mathbb{R}$.
The last is the case, if and only if (\ref{semi1}) holds, because (\ref{semi1}) clearly implies (\ref{semi2}).  
Now the statement follows easily by \cite[Theorem 5.5.4]{fot}.\\
(ii) If (H1) does not hold, then $\nu$ is a signed smooth measure, if and only if it is locally finite in $\mathbb{R}\setminus \{0\}$. 
But $\nu$ is always locally finite in $\mathbb{R}\setminus \{0\}$ and so the assertion follows from \cite[Theorem 5.5.4]{fot}. \\
(iii) By \cite[Theorem 5.5.1]{fot} we only have to calculate the local martingale part $M^{[id]}_t$, and the local zero energy part $N^{[id]}_t$ 
appearing in the local Fukushima decomposition for $A^{[id]}_t=X_t-X_0=M^{[id]}_t+N^{[id]}_t$. By (\ref{id1}), (\ref{semi1}), and (\ref{semi2}) it immediately follows with 
\cite[Corollary 5.5.1]{fot} that 
\begin{eqnarray}\label{zero}
N_t^{[id]}=\sum_{k\in \mathbb{Z}}\left \{\frac{\gamma_{k+1}-\gamma_{k}}{2}\ell_t^{l_k}+\frac{\overline{\gamma}_{k+1}-\overline{\gamma}_{k}}{2}\ell_t^{r_k}\right \}+
\frac{\overline{\gamma}-\gamma}{2}\ell^0_t, t\ge 0.
\end{eqnarray}
Under (H1) the equality in (\ref{zero}) is strict, i.e. it holds $P_x$-a.s. for all $x\in\mathbb{R}$. 
Since  $M^{[id]}_t$ is a continuous local martingale it suffices to show that for its quadratic variation, 
we have $\langle M^{[id]} \rangle_t=t$. The Revuz measure $\mu_{\langle M^{[id]} \rangle}$ of $\langle M^{[id]} \rangle$ satisfies 
$$
\mu_{\langle M^{[id]} \rangle}=\rho dx
$$
which is the same than the Revuz measure of the additive functional $A_t=t$. Thus the equality $\langle M^{[id]} \rangle_t=t$  is strict by  
(H1).\\
(iv) If (H1) does not hold, then using (ii) and the same line of arguments as in (iii), 
with $u:=id$ and test functions $v\in D({\cal E})_{b, F_n}$, $n\ge 1$ in \cite[Theorem 5.5.4]{fot}, where $(F_n)_{n\ge 1}$ 
is a generalized nest, we obtain that
\begin{eqnarray}\label{zero2}
N_t^{[id]}=\sum_{k\in \mathbb{Z}}\left \{\frac{\gamma_{k+1}-\gamma_{k}}{2}\ell_t^{l_k}+\frac{\overline{\gamma}_{k+1}-\overline{\gamma}_{k}}{2}\ell_t^{r_k}\right \}, t\ge 0.
\end{eqnarray}
$P_x$-a.s. for all $x\in\mathbb{R}\setminus \{0\}$. As in (iii) we obtain $\langle M^{[id]} \rangle_t=t$ in the sense of equivalence of PCAFs. 
Thus $\langle M^{[id]} \rangle_t=t$  $P_x$-a.s. for all $x\in\mathbb{R}\setminus \{0\}$. This completes our proof.
\end{proof}
\begin{rem}\label{localtime}
If we do not assume (H0), then we obtain Theorem \ref{semimart} exactly as before, 
except that the semimartingale property and the identification of the associated process only hold up to the lifetime $\zeta$, i.e. for $t<\zeta$. 
Indeed, the corresponding process is then a diffusion up to lifetime, i.e. 
Proposition \ref{consdiff}(ii) holds with $\infty$ replaced by $\zeta$ (see proof of Proposition \ref{consdiff}) and the semimartingale 
property, as well as the identification of the process can be worked out up to lifetime exactly as in the proof of Theorem \ref{semimart}.\\
\end{rem}
The PCAFs $(\ell_t^a)_{t\ge 0}$ in Theorem \ref{semimart} can be uniquely determined up to a constant. 
If $((X_t)_{t\ge 0}, P_x)$ is a semimartingale, then
\begin{eqnarray}\label{localtime(ii)}
\ell^{l_k}=\frac{2}{\gamma_{k+1}+\gamma_k}\ell^{l_k}(X), \ \ \ell^{r_k}=\frac{2}{\overline{\gamma}_{k+1}+\overline{\gamma}_{k}}\ell^{r_k}(X), \ \ \ell^{0}=\frac{2}{\overline{\gamma}+\gamma}\ell^{0}(X)
\end{eqnarray}
$P_x$-a.s. for any $k\in\mathbb{Z}$ and for $\overline{\gamma}+\gamma\not= 0$, where $\ell^a(X)$, $a\in\mathbb{R}$, denotes the symmetric 
semimartingale local time at $a$ of $((X_t)_{t\ge 0}, P_x)$  as defined in \cite[VI. (1.25) Exercise]{ry05}. Once the process $X_t$ is a semimartingale, 
this can be carried out by comparing the symmetric Tanaka formula (see \cite[VI. (1.2) Theorem]{ry05} for the left version of it) for $|X_t-a|$ 
with the local Fukushima decomposition (cf. \cite[Theorem 5.5.1]{fot}) for 
$|X_t-a|$, where we choose $a=0$ for $\overline{\gamma}+\gamma\not= 0$, and $a=l_k,\, r_k$, $k\in \mathbb{Z}$. 
This is done in all details for the point $a=0$ in the introduction of \cite{RuTr4}, but the procedure is exactly the same for any other point. So, we 
omit the proof. Therefore, the following corollary follows immediately from Theorem \ref{semimart}.
\begin{cor}\label{countablyskewbm}
\begin{itemize}
\item[(i)] Suppose that $((X_t)_{t\ge 0}, (P_x)_{x\in \mathbb{R}})$ is a semimartingale. If (H1) holds, then for any $x\in \mathbb{R}$
\begin{eqnarray}\label{weak3}
X_t=x+W_t+ N_t, \ \ t\ge 0,\ P_x\mbox{-a.s.},
\end{eqnarray}
with
\begin{eqnarray}\label{n1}
N_t=\sum_{k\in \mathbb{Z}}\left \{(2\alpha_k -1)\ell_t^{l_k}(X)+(2\overline{\alpha}_k-1)\ell_t^{r_k}(X)\right \}+(2\alpha-1)\ell_t^0(X),
\end{eqnarray}
where $(\ell^a_t(X))_{t \ge 0}$  is the symmetric semimartingale local time of $((X_t)_{t\ge 0}, (P_x)_{x\in\mathbb{R}})$ at $a$, and (with $\gamma, \overline{\gamma}$ as defined in (\ref{semi2}))
$$
\alpha=\frac{\overline{\gamma}}{\overline{\gamma}+\gamma}\ \mbox{ if } \ \overline{\gamma}\not=\gamma \ \mbox{ otherwise }\  \alpha=\frac{1}{2}, \ \  \alpha_k=\frac{\gamma_{k+1}}{\gamma_{k+1}+\gamma_k}, \ \ 
\overline{\alpha}_k=\frac{\overline{\gamma}_{k+1}}{\overline{\gamma}_{k+1}+\overline{\gamma}_{k}},\ \ k\in \mathbb{Z}.
$$
\item[(ii)] If (H1) does not hold, then (\ref{weak3}) holds for any $x\in \mathbb{R}\setminus \{0\}$ with
\begin{eqnarray}\label{n2}
N_t=\sum_{k\in \mathbb{Z}}\left \{(2\alpha_k -1)\ell_t^{l_k}(X)+(2\overline{\alpha}_k-1)\ell_t^{r_k}(X)\right \},
\end{eqnarray}
where $(\ell^a_t(X))_{t \ge 0}$  is the symmetric semimartingale local time of $((X_t)_{t\ge 0}, (P_x)_{x\in\mathbb{R}\setminus\{0\}})$ at $a$.
\end{itemize}
\medskip
Consequently, we have  $P_x$-a.s. $\langle X\rangle_t = \langle W\rangle_t=t$ for any $t\ge 0$ and for all $x\in \mathbb{R}$ in case of (i) (resp. for all $x\in \mathbb{R}\setminus \{0\}$ in case of (ii)). 
Thus by the occupation times formula \cite{ry05}, we have $\int_0^t 1_{\{y\}}(X_s)ds=\int_{\mathbb{R}}1_{\{y\}}(a)\ell^a_t(X)da=0$ $P_x$-a.s. for any $x,y\in \mathbb{R}$ in case of (i) 
(for any $x,y\in \mathbb{R}\setminus\{0\}$ in case of (ii)), and so also
\begin{eqnarray}\label{n3} 
\int_0^t 1_{\{y\}}(X_s)dW_s=0,
\end{eqnarray}
$P_x$-a.s. for any $t\ge 0$ and $x,y\in \mathbb{R}$ in case of (i) (for any $x,y\in \mathbb{R}\setminus\{0\}$ in case of (ii)).\\
\end{cor}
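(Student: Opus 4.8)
The plan is to obtain parts (i) and (ii) by substituting the identities (\ref{localtime(ii)}) into the decompositions (\ref{weak1}), (\ref{weak2}) supplied by Theorem \ref{semimart}, and then to read off the \lq\lq Consequently\rq\rq\ assertions from the fact that $X$ is thereby a continuous semimartingale with $\langle X\rangle_t=t$. Concretely, assume first that $((X_t)_{t\ge 0},(P_x)_{x\in\mathbb{R}})$ is a semimartingale and that (H1) holds; then Theorem \ref{semimart}(i),(iii) applies, so (\ref{semi1}), (\ref{semi2}) hold and we have the representation (\ref{weak1}) $P_x$-a.s.\ for every $x\in\mathbb{R}$, with $\ell^{l_k},\ell^{r_k},\ell^0$ the PCAFs Revuz-associated to $\delta_{l_k},\delta_{r_k},\delta_0$. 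I would then replace, using (\ref{localtime(ii)}), $\ell^{l_k}=\tfrac{2}{\gamma_{k+1}+\gamma_k}\ell^{l_k}(X)$, $\ell^{r_k}=\tfrac{2}{\overline{\gamma}_{k+1}+\overline{\gamma}_k}\ell^{r_k}(X)$ and, when $\overline{\gamma}+\gamma\neq 0$, $\ell^{0}=\tfrac{2}{\overline{\gamma}+\gamma}\ell^{0}(X)$, where $\ell^a(X)$ is the symmetric semimartingale local time of \cite[VI. (1.25) Exercise]{ry05}. The coefficient bookkeeping is then elementary: $\tfrac{\gamma_{k+1}-\gamma_k}{2}\cdot\tfrac{2}{\gamma_{k+1}+\gamma_k}=\tfrac{\gamma_{k+1}-\gamma_k}{\gamma_{k+1}+\gamma_k}=2\alpha_k-1$ for $\alpha_k=\tfrac{\gamma_{k+1}}{\gamma_{k+1}+\gamma_k}$, and likewise for the $r_k$- and the $0$-terms, producing exactly (\ref{weak3})--(\ref{n1}). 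The degenerate case $\overline{\gamma}=\gamma$ (in particular $\overline{\gamma}=\gamma=0$) needs a word: there the coefficient $\tfrac{\overline{\gamma}-\gamma}{2}$ of $\ell^0$ already vanishes in (\ref{weak1}), so one sets $\alpha=\tfrac12$, the term $(2\alpha-1)\ell^0_t(X)=0$, and (\ref{localtime(ii)}) at $0$ is not invoked. Convergence of the series in (\ref{n1}) is inherited from the already established convergence in (\ref{weak1}). Part (ii) is identical, starting from (\ref{weak2}) of Theorem \ref{semimart}(iv), valid for $x\in\mathbb{R}\setminus\{0\}$, which carries no $\ell^0$-term and yields (\ref{n2}).

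For the \lq\lq Consequently\rq\rq\ part, by (i)/(ii) we have $X_t=x+W_t+N_t$ with $W$ a standard Brownian motion, hence $\langle W\rangle_t=t$; since $X$ is a semimartingale and $W$ is a continuous local martingale with $W_0=0$, uniqueness of the canonical decomposition of the continuous semimartingale $X$ forces $N$ to be continuous of locally bounded variation, whence $\langle X\rangle_t=\langle W\rangle_t=t$ $P_x$-a.s.\ (in case (ii): for $x\in\mathbb{R}\setminus\{0\}$). Next I would apply the occupation times formula for the continuous semimartingale $X$ (cf.\ \cite{ry05}): for Borel $\varphi\ge 0$, $\int_0^t\varphi(X_s)\,ds=\int_0^t\varphi(X_s)\,d\langle X\rangle_s=\int_{\mathbb{R}}\varphi(a)\ell^a_t(X)\,da$; with $\varphi=1_{\{y\}}$ the right-hand side is $0$ since $\{y\}$ is Lebesgue-null, so $\int_0^t 1_{\{y\}}(X_s)\,ds=0$ $P_x$-a.s. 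Finally, since $s\mapsto 1_{\{y\}}(X_s)$ is bounded and progressively measurable, $M^y_t:=\int_0^t 1_{\{y\}}(X_s)\,dW_s$ is a continuous $L^2$-martingale with $\langle M^y\rangle_t=\int_0^t 1_{\{y\}}(X_s)^2\,ds=\int_0^t 1_{\{y\}}(X_s)\,ds=0$ $P_x$-a.s.; a continuous local martingale with identically vanishing quadratic variation vanishes, which gives (\ref{n3}).

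There is no obstacle of substance here: the corollary is, as indicated in the text, an immediate consequence of Theorem \ref{semimart} together with (\ref{localtime(ii)}) — whose proof, a comparison at each of the points $l_k, r_k, 0$ of the symmetric Tanaka formula \cite[VI. (1.2) Theorem]{ry05} with the local Fukushima decomposition \cite[Theorem 5.5.1]{fot}, is the same as the one worked out at $0$ in \cite{RuTr4}. The only points needing a little care are keeping track of the exceptional point $0$ and of the sub-case $\overline{\gamma}=\gamma$ (resp.\ $\overline{\gamma}+\gamma=0$) when rewriting the $\ell^0$-term, and verifying that the hypotheses of the occupation times formula are met — which reduces precisely to $X$ being a continuous semimartingale with $\langle X\rangle_t=t$, as secured by (i)/(ii).
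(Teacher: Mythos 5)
Your proposal is correct and follows essentially the same route as the paper: the paper likewise obtains (\ref{weak3})--(\ref{n2}) by substituting the identities (\ref{localtime(ii)}) (themselves justified by comparing the symmetric Tanaka formula with the local Fukushima decomposition, as in \cite{RuTr4}) into the decompositions (\ref{weak1}), (\ref{weak2}) of Theorem \ref{semimart}, and reads off the \lq\lq Consequently\rq\rq\ assertions from $\langle X\rangle_t=\langle W\rangle_t=t$ together with the occupation times formula. Your additional care with the degenerate case $\overline{\gamma}=\gamma$ and the isometry argument for (\ref{n3}) only makes explicit what the paper leaves implicit.
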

Let us choose a \lq\lq symmetric\rq\rq\ pointwise version of $\rho$ 
\begin{eqnarray}\label{distbm(i)}
\tilde{\rho}&:=&\sum_{k\in \mathbb{Z}}\left \{\gamma_{k+1}1_{(l_k,l_{k+1})}+\overline{\gamma}_{k+1}1_{(r_k,r_{k+1})}
+\frac{\gamma_{k+1}+\gamma_{k}}{2}1_{\{l_k\}}+\frac{\overline{\gamma}_{k+1}+\overline{\gamma}_{k}}{2}1_{\{r_k\}} \right \} \nonumber \\
&& +\frac{\overline{\gamma}+\gamma}{2}1_{\{0\}}.\\ \nonumber
\end{eqnarray}
Condition (\ref{semi1}) implies that $\tilde{\rho}$ is locally of bounded variation and so $d\tilde{\rho}$ is a signed Radon measure that is 
locally of bounded total variation. In particular, it can be written as
$$
d\tilde{\rho}(a)=\sum_{k\in \mathbb{Z}}\left \{(\gamma_{k+1}-\gamma_{k})\delta_{l_k}(da)
+(\overline{\gamma}_{k+1}-\overline{\gamma}_{k})\delta_{r_k}(da) \right \}+(\overline{\gamma}-\gamma)\delta_{0}(da).
$$
Then clearly $N_t$ in (\ref{n1}) equals
$$
\frac{1}{2}\int_{\mathbb{R}}\ell_t^a(X)\frac{d\tilde{\rho}(a)}{\tilde{\rho}(a)}.
$$
and so (\ref{weak3}) has the form
\begin{eqnarray}\label{distgeneral0}
X_t=x + W_t + \frac{1}{2}\int_{\mathbb{R}} \ell_t^a(X)\frac{d\tilde{\rho}(a)}{\tilde{\rho}(a)}.
\end{eqnarray}
We will see below in Remark \ref{counterex} and Example \ref{Bessel} that the signed measure 
$\frac{d\tilde{\rho}(a)}{\tilde{\rho}(a)}$ needs not to be locally of bounded total variation in general.\\

\begin{rem}\label{distbm}
Let  $\tilde{\rho}$ be as in (\ref{distbm(i)}). Instead of (\ref{distgeneral0}), we could have considered the more general equation 
\begin{eqnarray}\label{distgeneral1}
X_t=x + \int_0^t \sigma(X_s)dW_s +\int_0^t b(X_s)ds+ \frac{1}{2}\int_{\mathbb{R}} \ell_t^a(X)\frac{d\tilde{\rho}(a)}{\tilde{\rho}(a)}.
\end{eqnarray}
Indeed, this is possible for very general $\sigma$ and $b$ by considering instead of the bilinear form (\ref{closureDF}) on $L^2(\mathbbm{R},\rho dx)$, the bilinear form 
\begin{eqnarray*}
{\cal E}(f,g):=\frac{1}{2}\int_{\mathbb{R}}\sigma^2(x)f'(x)g'(x)\rho(x)\varphi(x)dx, \ \ \ f,g\in C_0^{\infty}(\mathbb{R})
\end{eqnarray*}
on $L^2(\mathbbm{R},\rho \varphi dx)$, where
$$
\varphi(x):=\frac{1}{\sigma(x)^{2}}e^{\int_0^x \frac{2b}{\sigma^2}(y)dy}.
$$
If $\sigma$ and $b$ are not too singular, (\ref{distgeneral1}) may be derived by similar techniques as presented here for $\varphi\equiv 1$. 
Thus, we do not expect any new phenomena resulting from $\sigma$ and $b$,
 except if $\sigma$ and $b$ are very singular as for instance in
 \cite{FRT1, FRT2, RTAP}. Such an analysis however, mixing the techniques of  \cite{FRT1, FRT2, RTAP} and the countably skew reflected
Brownian motion framework would lead us
too far and is more suitably performed in a subsequent work.\\
\end{rem}
\begin{rem}\label{counterex}
Assume that (H1) holds. It may then happen that (\ref{semi1}) holds, i.e.
$$
\sum_{k\le 0}|\overline{\gamma}_{k+1}-\overline{\gamma}_{k}|+\sum_{k\ge 0}|\gamma_{k+1}-\gamma_{k}|<\infty,
$$ 
thus $((X_t)_{t\ge 0}, (P_x)_{x\in \mathbb{R}})$ is a semimartingale by Theorem \ref{semimart}(i), 
but for the $\alpha_k$ and $\overline{\alpha}_k$ corresponding to the  $l_k$, $k\ge 0$, and $r_k$, $k\le 0$, we have 
$$
\sum_{k\ge 0}|2\alpha_k -1|=\sum_{k\ge 0}\left |\frac{\gamma_{k+1}-\gamma_k}{\gamma_{k+1}+\gamma_k}\right |=\infty \ \ \mbox{ or }\ \ 
\sum_{k\le 0}|2\overline{\alpha}_k-1|=\sum_{k\le 0}\left |\frac{\overline{\gamma}_{k+1}-\overline{\gamma}_{k}}{\overline{\gamma}_{k+1}+\overline{\gamma}_{k}}\right |
=\infty.
$$
This happens typically if $\ell^0(X)\equiv 0$. Indeed, $\ell^0(X)\equiv 0$ implies 
the continuity of $a\mapsto \ell^a_t(X)$ in $a=0$ by \cite[VI.(1.7) Theorem]{ry05}. 
Thus $\lim_{k\to\infty} \ell_t^{l_k}(X)=0$ and $\lim_{k\to-\infty} \ell_t^{r_k}(X)=0$. 
This is for instance the case in Example \ref{Bessel} below with $\delta\in (1,2)$.\\ 
On the other hand $\sum_{k\ge 0}|2\alpha_k -1|+\sum_{k\le 0}|2\overline{\alpha}_k-1|<\infty$ is stronger than (\ref{semi1}) and (H1) together as it 
implies (\ref{semi1}) and (\ref{hbddvargamma}) below (cf. Remark \ref{leGallweaker3}(ii)) and then also (H1) holds (cf. Remark \ref{caph1}).
\end{rem}
\begin{exam}\label{skewbm}($\alpha$-skew Brownian motion)\\
Let  $\alpha\in (0,1)$, and  $\gamma_k = \frac{1-\alpha}{\alpha}$, $\overline{\gamma}_k=1$, for all $k\in \mathbb{Z}$, i.e. for $x\notin \{l_k, r_k;k\in \mathbb{Z}\}$
$$
\rho(x)=\frac{1-\alpha}{\alpha}1_{(-\infty,0)}(x)+1_{(0,\infty)}(x).
$$
Then, since the corresponding Dirichlet (form) norm is equivalent to the one of 
Brownian motion, we obtain that the corresponding process is conservative (even recurrent), and (H1) holds. Thus the corresponding process is a conservative diffusion by Proposition \ref{consdiff}. 
Moreover, clearly (\ref{semi1}) and (\ref{semi2}) hold with $\gamma=\frac{1-\alpha}{\alpha}$ and $\overline{\gamma}=1$. Hence by Theorem \ref{semimart} $((X_t)_{t\ge 0}, P_x)$ is a semimartingale 
for any $x\in \mathbb{R}$.
By Corollary  \ref{countablyskewbm} we have $N_t=(2\alpha-1)\ell_t^0(X)$, 
since $\alpha_k=\overline{\alpha}_k=\frac{1}{2}$ for all $k\in \mathbb{Z}$. Hence $((X_t)_{t\ge 0}, (P_x)_{x\in \mathbb{R}})$ is the $\alpha$-skew Brownian motion (cf e.g. \cite{hs}, \cite{itomckean}).
\end{exam}

\begin{exam}\label{Bessel}(Resemblance to Bessel processes)\\
In this example, we show that there is a  solution  to (\ref{weak3}) (which by definition is a conservative diffusion that is a semimartingale) with 
$$
\sum_{k\ge 0}|2\alpha_k-1|+\sum_{k\le 0}|2\overline{\alpha}_k-1|=\infty.
$$
Let $-l_k=r_{-k}=\frac{1}{k}$ for $k\ge 1$, $-l_{k}=r_{-k}=-k+2$ for $k\le 0$, and 
$\gamma_k=(-l_k)^{\delta-1}$, $\overline{\gamma}_k=(r_k)^{\delta-1}$, $k\in \mathbb{Z}$, $\delta\in (0,1)\cup (1,2)\cup [2,\infty)$. 
(The case $\delta=1$ corresponds to Brownian motion.) Then $\rho(x)$ is the upper Riemann step function of $\varphi(x):=|x|^{\delta-1}$ corresponding to the partition $(l_k)_{k\in \mathbb{Z}}$ on $(-\infty,0)$, and the 
lower Riemann step function of $\varphi$ corresponding to the partition $(r_k)_{k\in \mathbb{Z}}$ on $(0,\infty)$. We can hence easily see from \cite[Example 2.2.4]{fot} that 
$$
\mbox{cap}(\{0\})>0\Leftrightarrow \delta\in (0,2).
$$
By comparing the underlying Dirichlet form with the Dirichlet form of the Bessel processes (in this case $\rho(x)=|x|^{\delta-1}$), and using \cite[Theorem 4]{stu}, we can see that 
(H0) holds and so Proposition \ref{consdiff} applies. Moreover 
$$
\sum_{k\le -2}|\overline{\gamma}_{k+1}-\overline{\gamma}_{k}|=|1-\lim_{k\to \infty} k^{1-\delta}|   \ \ \mbox{ and }  \ \ \sum_{k\ge 1}|\gamma_{k+1}-\gamma_{k}|=|\lim_{k\to \infty}k^{1-\delta}-1|.
$$ 
Thus by Theorem \ref{semimart} the corresponding process is not a semimartingale if $\delta\in(0,1)$, and a semimartingale with respect to to all starting points that have positive capacity, if $\delta\ge 1$.\\
However (cf. Remark \ref{counterex}), if $\delta\in (1,2)$, then by the mean value theorem for some $\vartheta_k\in [k,k+1]$, $k\ge 1$, 
\begin{eqnarray*}
\sum_{k\le -2}\left |\frac{\overline{\gamma}_{k+1}-\overline{\gamma}_{k}}{\overline{\gamma}_{k+1}+\overline{\gamma}_{k}}\right | 
& = & \sum_{k\ge 1}\frac{ (k+1)^{\delta-1}-(k)^{\delta-1} }{(k+1)^{\delta-1}+(k)^{\delta-1} }\\
& = &\sum_{k\ge 1}\frac{ (\delta-1)\vartheta_k^{\delta-2} }{ (k+1)^{\delta-1}+(k)^{\delta-1} } \ge\sum_{k\ge 1}\frac{\delta-1}{2}(k+1)^{-1}=+\infty.\\
\end{eqnarray*}
Exactly in the same way we can show 
$$
\sum_{k\ge 0}\left |\frac{\gamma_{k+1}-\gamma_k}{\gamma_{k+1}+\gamma_k}\right |=\infty.
$$
Note that in this case $\ell_t^{0}(X)\equiv 0$, and $\ell^{a}_t(X)$, $a\in \{0,l_k,r_k, k\in \mathbb{Z}\}$ is uniquely associated to its Revuz measure $a^{\delta-1}\delta_{a}$. Moreover, $a_n^{\delta-1}\delta_{a_n}\to 0$ weakly whenever $a_n\to 0$.
\end{exam}
Our strategy to construct a solution to (\ref{weak3}) was first to construct a solution to the basic equation (\ref{weak1}) via the underlying Dirichlet form determined by (\ref{closureDF}), and then to rewrite (\ref{weak1}) as (\ref{weak3}) using (\ref{localtime(ii)}). Now, we ask under which assumptions on the underlying parameters a solution to (\ref{weak3}) exists.
\begin{prop}\label{legallweaker0}
Let $(\alpha_k)_{k\in\mathbb{Z}}, (\overline{\alpha}_k)_{k\in\mathbb{Z}}\subset (0,1)$,  
be arbitrarily given. Let $(l_k)_{k\in\mathbb{Z}}, (r_k)_{k\in\mathbb{Z}}$, be a partition of $\mathbb{R}$ as described at the beginning of Section \ref{two}. 
For arbitrarily chosen $\gamma_0>0$ and $\overline{\gamma}_0>0$ define 
\begin{equation}\label{alphagamma2}\tag{Gamdef0}
\gamma_k=\prod_{j=k}^{-1}\frac{1-\alpha_j}{\alpha_j}\gamma_0,\ \ k\le -1,\ \  \ \ \gamma_k=\prod_{j=0}^{k-1}\frac{\alpha_j}{1-\alpha_j}\gamma_0, \ \ k\ge 1.
\end{equation}
and  
\begin{equation}\label{alphagamma1}\tag{Gamdef1}
\overline{\gamma}_k=\prod_{j=k}^{-1}\frac{1-\overline{\alpha}_j}{\overline{\alpha}_j}\overline{\gamma}_0,\ \ k\le -1,\ \  \ \ \overline{\gamma}_k=\prod_{j=0}^{k-1}\frac{\overline{\alpha}_j}{1-\overline{\alpha}_j}\overline{\gamma}_0, \ \ k\ge 1,
\end{equation}
Suppose that (\ref{semi1}) holds for $(\gamma_k)_{k\ge 0}$, $(\overline{\gamma}_k)_{k\le 0}$ defined by (\ref{alphagamma2}), (\ref{alphagamma1}). 
Then the bilinear form in (\ref{closureDF}) with $\rho$ defined through 
$(\gamma_k)_{k\ge 0}$, $(\overline{\gamma}_k)_{k\le 0}$, $(l_k)_{k\in\mathbb{Z}}$, and $(r_k)_{k\in\mathbb{Z}}$ as above, is well defined and closable in $L^2(\mathbb{R};\rho dx)$. Suppose that its 
closure $({\cal E}, D({\cal E}))$ satisfies (H0) and (H1). Then there exists a conservative diffusion  $((X_t)_{t\ge 0}, (P_x)_{x\in \mathbb{R}})$, which is a semimartingale and which weakly solves (\ref{weak3}).
\end{prop}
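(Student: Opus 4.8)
The plan is to reduce the statement to the results already proved in this section, the only genuinely new point being an elementary algebraic identity relating the product ansatz (\ref{alphagamma2})--(\ref{alphagamma1}) to the skewness parameters appearing in Corollary \ref{countablyskewbm}. First I would check that the function $\rho$ associated through (\ref{gammadef}) to the $(\gamma_k)_{k\in\mathbb{Z}}$, $(\overline\gamma_k)_{k\in\mathbb{Z}}$ defined by (\ref{alphagamma2})--(\ref{alphagamma1}) lies in $L^1_{loc}(\mathbb{R};dx)$, since this is the one hypothesis of Section \ref{two} that is not granted here and must be derived. As each factor $\frac{\alpha_j}{1-\alpha_j}$, $\frac{1-\alpha_j}{\alpha_j}$ is strictly positive, all $\gamma_k$ and $\overline\gamma_k$ are strictly positive; away from $0$ only finitely many of the intervals $(l_k,l_{k+1})$, $(r_k,r_{k+1})$ meet a given compact set, so there $\rho$ is a bounded step function; and near $0$, condition (\ref{semi1}) forces $(\gamma_k)_{k\ge0}$ and $(\overline\gamma_k)_{k\le0}$ to be Cauchy, hence bounded, so $\rho$ is bounded on a neighbourhood of $0$. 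Thus $\rho$ is locally bounded, $\rho\,dx$ is a positive Radon measure, and $\rho$ is bounded above and below away from zero on compact subsets of $\mathbb{R}\setminus\{0\}$; consequently, exactly as in the paragraph following (\ref{closureDF}), the bilinear form (\ref{closureDF}) is well defined and closable in $L^2(\mathbb{R};\rho\,dx)$, with regular symmetric Dirichlet form closure $({\cal E},D({\cal E}))$, which by hypothesis satisfies (H0) and (H1).

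Next, under (H0)--(H1) Proposition \ref{consdiff} shows that the associated Hunt process $((X_t)_{t\ge0},(P_x)_{x\in\mathbb{R}})$ is a conservative diffusion. Since (\ref{semi1}) is assumed for the present $(\gamma_k)_{k\ge0}$, $(\overline\gamma_k)_{k\le0}$, Theorem \ref{semimart}(i) yields that $((X_t)_{t\ge0},P_x)$ is a semimartingale for every $x\in\mathbb{R}$, and (\ref{semi2}) provides the limits $\gamma=\lim_{k\to\infty}\gamma_k$ and $\overline\gamma=\lim_{k\to-\infty}\overline\gamma_k$ in $[0,\infty)$. Corollary \ref{countablyskewbm}(i) then gives a Brownian motion $W$ starting at $0$ with $X_t=x+W_t+N_t$, $t\ge0$, $P_x$-a.s., where $N_t$ has the form (\ref{n1}) with coefficients $\tilde\alpha_k=\tfrac{\gamma_{k+1}}{\gamma_{k+1}+\gamma_k}$, $\tilde{\overline\alpha}_k=\tfrac{\overline\gamma_{k+1}}{\overline\gamma_{k+1}+\overline\gamma_k}$ and $\tilde\alpha=\tfrac{\overline\gamma}{\overline\gamma+\gamma}$ (equal to $\tfrac12$ when $\overline\gamma=\gamma$).

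It remains to identify $\tilde\alpha_k=\alpha_k$ and $\tilde{\overline\alpha}_k=\overline\alpha_k$. A direct computation from (\ref{alphagamma2}) shows $\gamma_{k+1}/\gamma_k=\frac{\alpha_k}{1-\alpha_k}$ for every $k\in\mathbb{Z}$ (for $k\ge1$ and $k\le-2$ this is the telescoping of the respective product, and the boundary cases $k=0$ and $k=-1$, which bridge the two formulas through $\gamma_0$, are checked directly); substituting into $\tilde\alpha_k=\gamma_{k+1}/(\gamma_{k+1}+\gamma_k)$ gives $\tilde\alpha_k=\alpha_k$, and (\ref{alphagamma1}) gives $\tilde{\overline\alpha}_k=\overline\alpha_k$ in the same way. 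Hence $N_t$ in (\ref{n1}) is precisely the drift prescribed by (\ref{weak3})--(\ref{n1}) with the given parameters $(\alpha_k),(\overline\alpha_k),\alpha$, so $((X_t)_{t\ge0},(P_x)_{x\in\mathbb{R}})$ weakly solves (\ref{weak3}). I do not expect any real obstacle: all the analytic work is carried by Proposition \ref{consdiff}, Theorem \ref{semimart}(i) and Corollary \ref{countablyskewbm}(i), and the content of the statement is just that (\ref{alphagamma2})--(\ref{alphagamma1}) is, up to the free scalars $\gamma_0,\overline\gamma_0$, the solution of the recursion $\gamma_{k+1}(1-\alpha_k)=\alpha_k\gamma_k$ that the constraint $\tilde\alpha_k=\alpha_k$ forces. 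The only point where a little care is needed is the derivation of $\rho\in L^1_{loc}$, which is where (\ref{semi1}) is used a second time, now to control $\rho$ near the accumulation point.
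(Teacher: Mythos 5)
Your proposal is correct and follows essentially the same route as the paper: derive $\rho\in L^1_{loc}$ from (\ref{semi1}), invoke the closability and regularity argument given after (\ref{closureDF}), and then apply Corollary \ref{countablyskewbm}(i) under (H0) and (H1). The only difference is that you spell out two details the paper leaves implicit — the local boundedness of $\rho$ near the accumulation point via the Cauchy property of $(\gamma_k)_{k\ge0}$, $(\overline\gamma_k)_{k\le0}$, and the telescoping identity $\gamma_{k+1}/\gamma_k=\alpha_k/(1-\alpha_k)$ showing that the skewness parameters produced by Corollary \ref{countablyskewbm}(i) coincide with the prescribed $(\alpha_k)$, $(\overline\alpha_k)$ — both of which are correct and worth making explicit.
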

\begin{proof}
Condition (\ref{semi1}) implies that $\rho$ defined through $(\gamma_k)_{k\ge 0}$, $(\overline{\gamma}_k)_{k\le 0}$, and $(l_k)_{k\in\mathbb{Z}}, (r_k)_{k\in\mathbb{Z}}$ as in the statement 
is in $L^1_{loc}(\mathbbm{R},dx)$. Therefore, exactly as explained after (\ref{closureDF}) the bilinear form (\ref{closureDF}) is well defined and closable in $L^2(\mathbb{R};\rho dx)$. Since the closure $({\cal E}, D({\cal E}))$ is regular by construction and moreover satisfies (H0) and (H1) by assumption, we can apply Corollary \ref{countablyskewbm}(i) to obtain the result.
\end{proof}
\begin{rem}\label{foranyalpha}
Suppose that all the conditions of Proposition \ref{legallweaker0} are satisfied. Let $\gamma, \overline{\gamma}$ be defined as in (\ref{semi2}) 
where $(\gamma_k)_{k\ge 0}$, $(\overline{\gamma}_k)_{k\le 0}$ is given by (\ref{alphagamma2}), (\ref{alphagamma1}). 
If $\gamma=0$, $\overline{\gamma}>0$ or $\gamma>0$, $\overline{\gamma}=0$, then $\alpha\in\{0,1\}$ in (\ref{n1}). If $\gamma=\overline{\gamma}=0$, then $\alpha=\frac{1}{2}$. 
If $\gamma,\overline{\gamma}>0$, then we can obtain a solution to (\ref{weak3}) for any $\alpha\in (0,1)$ by varying $\gamma_0, \overline{\gamma}_0$ in Proposition \ref{legallweaker0}. 
Note that the values of $\alpha_k$, $\overline{\alpha}_k$ are not influenced by varying $\gamma_0, \overline{\gamma}_0$, only $\alpha$ is influenced.
\end{rem}

\section{Pathwise uniqueness, ergodic properties and applications to advection-diffusion}\label{three}
In this section we investigate further properties of the process constructed in Section \ref{two} under more restrictive assumptions on the density $\rho$. 
It turns out that  (\ref{semi1}) and the below (\ref{hbddvargamma}) are the right framework under which this process is to be considered. Starting from these 
two assumptions as a basis, we derive sufficient conditions for pathwise uniqueness and sharp conditions for non-explosion, recurrence and positive recurrence. Having 
developed the necessary tools, we propose an application to advection-diffusions in layered media via the theory of generalized Dirichlet forms.\\ 
In order to fix the final assumptions that will be in force throughout this section (see right after Remark \ref{hbddvarrem} below), we first fix $\rho$ as in (\ref{gammadef}), such that 
(\ref{semi1}) holds. Note that then $\rho\in L^1_{loc}(\mathbb{R};dx)$ by (\ref{semi2}). Furthermore, we assume that 
for  $\gamma, \overline{\gamma}$ as defined in (\ref{semi2}) it holds $\gamma, \overline{\gamma}>0$.
The latter implies that $\frac{1}{\rho}\in L^1_{loc}(\mathbb{R},dx)$ and that (H1) holds (see Remark \ref{caph1}). In contrast to section \ref{two}, {\bf we do not assume (H0)}. In particular, according to Remark \ref{localtime}, we have that $((X_t)_{t\ge 0}, (P_x)_{x\in\mathbb{R}})$ is a semimartingale and a diffusion up to lifetime $\zeta=\inf\{t>0\,|\,X_t\notin \mathbb{R}\}$ and for all $x\in \mathbb{R}$ it holds that 
\begin{eqnarray}\label{weak1zeta}
X_t=x+W_t+ \sum_{k\in \mathbb{Z}}\left \{\frac{\gamma_{k+1}-\gamma_{k}}{2}\ell_t^{l_k}+\frac{\overline{\gamma}_{k+1}-\overline{\gamma}_{k}}{2}\ell_t^{r_k}\right \}+
\frac{\overline{\gamma}-\gamma}{2}\ell^0_t, \ t<\zeta,\ P_x\mbox{-a.s.}
\end{eqnarray}
\subsection{Conservativeness and pathwise uniqueness}\label{3.1}
Let $\alpha:=\frac{\overline{\gamma}}{\overline{\gamma}+\gamma}$. Suppose $h:\mathbb{R}\to \mathbb{R}$ is the difference of two convex functions and piecewise linear with slope 
$\frac{\alpha\gamma}{\gamma_{k+1}}$ on the interval $(l_k,l_{k+1})$ and slope 
$\frac{(1-\alpha)\overline{\gamma}}{\overline{\gamma}_{k+1}}$ on the interval $(r_k, r_{k+1})$, $k\in \mathbb{Z}$. In particular $h$ is continuous and uniquely determined up to a constant. In order to fix a version, we let 
$$
h(0)=0.
$$ 
Let $h'(x)=\frac{h'(x+)+h'(x-)}{2}$ denote the symmetric derivative of $h$. In particular 
$$
h'(0)=\frac{\lim_{k\to -\infty}\frac{(1-\alpha)\overline{\gamma}}{\overline{\gamma}_{k+1}}+\lim_{k\to \infty}\frac{\alpha\gamma}{\gamma_{k+1}}}{2}=\frac12.
$$  
\begin{rem}\label{hbddvarrem}
$h$ with the properties stated above exists, if and only if $h'$ is locally of bounded variation, that is
\begin{equation}\label{hbddvargamma}\tag{S1}
\sum_{k\ge 0} \left |\frac{1}{\gamma_{k}}-\frac{1}{\gamma_{k+1}}\right | +\sum_{k\le 0}\left |\frac{1}{\overline{\gamma}_{k}}-\frac{1}{\overline{\gamma}_{k+1}}\right |<\infty.
\end{equation}
Note further that all our assumptions so far (namely (\ref{semi1}), $\gamma,\overline{\gamma}>0$ and the existence of $h$ as above) are satisfied, if and only if (\ref{semi1}) and (\ref{hbddvargamma}) hold. 
\end{rem}
According to Remark \ref{hbddvarrem} we will {\bf assume} (to the sole exception of Theorem \ref{legallweaker1}) {\bf from now on up to the end of section \ref{three} that (\ref{semi1}) and (\ref{hbddvargamma}) hold and fix $h$ like above}.
\begin{lem}\label{algebraic0dir}
$(Y_t:=h(X_t))_{t\ge 0}$ is a continuous local martingale up to $\zeta$ with quadratic variation
\begin{eqnarray}\label{transformationdir}
\langle Y \rangle_t & = & \int_0^t (h'\circ h^{-1})^2(Y_s)ds,\ \ \ t<\zeta,\ \ P_x\mbox{-a.s.} \\ \nonumber
\end{eqnarray}
for all $x\in \mathbb{R}$.
\end{lem}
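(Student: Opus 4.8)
The plan is to apply the symmetric It\^o--Tanaka (generalized It\^o) formula to $h$. This is legitimate because $h$, being a difference of two convex functions, has a symmetric derivative $h'$ that is locally of bounded variation (Remark \ref{hbddvarrem}), so that $h''$ is a signed Radon measure, and because by Remark \ref{localtime} the process $((X_t)_{t\ge 0},P_x)$ is a continuous semimartingale up to $\zeta$ with $\langle X\rangle_t=t$ and decomposition $X_t=x+W_t+N_t$, where (by (\ref{weak1zeta}) together with (\ref{localtime(ii)}), equivalently by Corollary \ref{countablyskewbm})
$$
N_t=\sum_{k\in\mathbb{Z}}\big\{(2\alpha_k-1)\ell_t^{l_k}(X)+(2\overline{\alpha}_k-1)\ell_t^{r_k}(X)\big\}+(2\alpha-1)\ell_t^0(X),
$$
with $\alpha_k=\frac{\gamma_{k+1}}{\gamma_{k+1}+\gamma_k}$, $\overline{\alpha}_k=\frac{\overline{\gamma}_{k+1}}{\overline{\gamma}_{k+1}+\overline{\gamma}_k}$, $\alpha=\frac{\overline{\gamma}}{\overline{\gamma}+\gamma}$. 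First I would identify $h''$ explicitly: since $h$ is piecewise linear, $h''(da)$ is the sum of the jumps of $h'$ at the break points,
$$
h''(da)=\sum_{k\in\mathbb{Z}}\Big\{\alpha\gamma\Big(\tfrac{1}{\gamma_{k+1}}-\tfrac{1}{\gamma_{k}}\Big)\delta_{l_k}(da)+(1-\alpha)\overline{\gamma}\Big(\tfrac{1}{\overline{\gamma}_{k+1}}-\tfrac{1}{\overline{\gamma}_{k}}\Big)\delta_{r_k}(da)\Big\}+(1-2\alpha)\,\delta_0(da),
$$
which is locally finite precisely because (\ref{hbddvargamma}) holds (away from $0$ only finitely many break points lie in a compact set). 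Localizing along $\tau_n:=\inf\{t>0\,|\,|X_t|\ge n\}\uparrow\zeta$, on $[0,\tau_n]$ the path stays in $[-n,n]$ so $h'$ is bounded and $|h''|([-n,n])<\infty$, and the symmetric It\^o--Tanaka formula yields, for $t<\zeta$ and $P_x$-a.s.,
$$
Y_t=h(X_t)=h(x)+\int_0^t h'(X_s)\,dW_s+\int_0^t h'(X_s)\,dN_s+\frac12\int_{\mathbb{R}}\ell_t^a(X)\,h''(da).
$$

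Next I would show that the last two terms on the right cancel. Since each $d\ell_s^a(X)$ is carried by $\{s\le t\,|\,X_s=a\}$, one has $\int_0^t h'(X_s)\,d\ell_s^a(X)=h'(a)\,\ell_t^a(X)$ with $h'(a)$ the symmetric value, so
$$
\int_0^t h'(X_s)\,dN_s=\sum_{k\in\mathbb{Z}}\big\{(2\alpha_k-1)h'(l_k)\,\ell_t^{l_k}(X)+(2\overline{\alpha}_k-1)h'(r_k)\,\ell_t^{r_k}(X)\big\}+(2\alpha-1)h'(0)\,\ell_t^0(X).
$$
Using $h'(l_k)=\frac{\alpha\gamma}{2}\big(\frac{1}{\gamma_{k+1}}+\frac{1}{\gamma_k}\big)$ and $2\alpha_k-1=\frac{\gamma_{k+1}-\gamma_k}{\gamma_{k+1}+\gamma_k}$ one gets $(2\alpha_k-1)h'(l_k)=\frac{\alpha\gamma}{2}\big(\frac{1}{\gamma_k}-\frac{1}{\gamma_{k+1}}\big)$, which is exactly $-\tfrac12$ times the coefficient of $\delta_{l_k}$ in $h''$; the analogous identities hold at each $r_k$ and, with $h'(0)=\tfrac12$, at $0$. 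Since $a\mapsto\ell_t^a(X)$ is bounded with bounded support for fixed $\omega$ and $t<\zeta$, the series converge absolutely by (\ref{hbddvargamma}), and termwise cancellation gives $\int_0^t h'(X_s)\,dN_s+\frac12\int_{\mathbb{R}}\ell_t^a(X)\,h''(da)=0$. Hence $Y_t=h(x)+\int_0^t h'(X_s)\,dW_s$ for $t<\zeta$, which is a continuous local martingale up to $\zeta$ because $\int_0^{t\wedge\tau_n}h'(X_s)^2\,ds<\infty$.

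It remains to compute the quadratic variation: $\langle Y\rangle_t=\int_0^t h'(X_s)^2\,d\langle W\rangle_s=\int_0^t h'(X_s)^2\,ds$ for $t<\zeta$. Since all slopes of $h$ are strictly positive (here $\gamma,\overline{\gamma}>0$, so $\alpha\in(0,1)$), $h$ is a strictly increasing homeomorphism onto its range, whence $X_s=h^{-1}(Y_s)$; and by the occupation times formula (see the remark following (\ref{n3})) one has $X_s\notin\{0,l_k,r_k\,;\,k\in\mathbb{Z}\}$ for Lebesgue-a.e. $s$, so that $h'(X_s)=(h'\circ h^{-1})(Y_s)$ for a.e. $s$. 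This gives (\ref{transformationdir}).

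The main obstacle I expect is the bookkeeping of the cancellation step: correctly identifying $h''$ (in particular the sign of the jump of $h'$ at $0$, where the left slope is $\alpha$ and the right slope is $1-\alpha$), justifying the interchange of the infinite sums with the stochastic and Stieltjes integrals, and matching the PCAF local times of (\ref{weak1zeta}) with the symmetric semimartingale local times of Corollary \ref{countablyskewbm} via (\ref{localtime(ii)}). Once the symmetric It\^o--Tanaka formula is in place, the remaining steps are routine.
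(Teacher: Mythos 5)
Your proof is correct, but it takes a genuinely different route from the paper's. The paper proves Lemma \ref{algebraic0dir} entirely inside Dirichlet form theory: it shows $h\in D({\cal E})_{loc}$, computes ${\cal E}(h,f)=0$ for all test functions $f$ by a telescoping argument (the sums collapse because $h'\rho\equiv\alpha\gamma$ on $(-\infty,0)$ and $\equiv(1-\alpha)\overline{\gamma}$ on $(0,\infty)$), concludes from \cite[Theorem 5.5.4]{fot} that the zero-energy part $N^{[h]}$ in the Fukushima decomposition vanishes, and then identifies $\langle M^{[h]}\rangle_t=\int_0^t h'(X_s)^2ds$ via the Revuz correspondence ($\mu_{\langle M^{[h]}\rangle}=(h')^2\rho\,dx$). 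You instead run the symmetric It\^o--Tanaka formula on the semimartingale decomposition (\ref{weak3}) and verify the termwise cancellation $(2\alpha_k-1)h'(l_k)=-\tfrac12\,h''(\{l_k\})$ at every break point; this is precisely the computation the paper postpones to Lemma \ref{algebraic0} (equation (\ref{itotanaka})), which there is carried out only under the additional assumption of conservativeness. Your localization along $\tau_n\uparrow\zeta$ correctly removes that assumption, so in effect you merge Lemma \ref{algebraic0dir} with the later Lemma \ref{algebraic0}. The trade-off: the paper's route needs no explicit $h''$, no interchange of infinite sums with Stieltjes integrals, and no passage from PCAF local times to symmetric semimartingale local times via (\ref{localtime(ii)}) (whose proof the paper only sketches by reference to \cite{RuTr4}), whereas your route is more elementary and makes the cancellation mechanism --- the defining property of a scale function --- completely explicit. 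Two cosmetic points: the final appeal to the occupation times formula is unnecessary, since $h$ is a strictly increasing bijection onto its range and hence $(h'\circ h^{-1})(Y_s)=h'(X_s)$ identically, not merely for a.e.\ $s$; and you should note that absolute convergence of the series in $\int_0^t h'(X_s)\,dN_s$ uses that (\ref{semi1}) together with (\ref{hbddvargamma}) is equivalent to (\ref{legallcond1}) (Remark \ref{leGallweaker3}(ii)), since under (\ref{semi1}) alone the coefficients $2\alpha_k-1$ need not be summable (Remark \ref{counterex}).
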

\begin{proof}
(Cf. proof of Theorem \ref{semimart}) Note that $h\in D({\cal E})_{loc}$, since it can be approximated locally in the Dirichlet space by its convolution with a standard Dirac sequence. For any $f\in C_0^{\infty}(\mathbb{R})$ we then calculate
\begin{eqnarray}\label{itotanakadir}
-{\cal E}(h,f) & = & -\frac{1}{2}\lim_{n\to\infty}\sum_{-n\le k\le n}\left \{\int_{l_k}^{l_{k+1}}\alpha \gamma f'(x)dx+\int^{r_{k+1}}_{r_{k}}(1-\alpha)\overline{\gamma}f'(x)dx\right \} \nonumber \\ 
& = & -\frac{1}{2}\lim_{n\to\infty}\left \{\alpha \gamma (f(l_{n+1})-f({l_{-n}})) +(1-\alpha)\overline{\gamma}(f(r_{n+1})-f(r_{-n}))\right \} 
\nonumber \\ 
& = & -\frac{1}{2}\left \{\alpha \gamma f(0)-(1-\alpha)\overline{\gamma}f(0)\right \}=0. \\ \nonumber
\end{eqnarray}	
Therefore the drift $N^{[h]}$ in the Fukushima decomposition of $h(X_t)-h(X_0)$ vanishes on account of \cite[Theorem 5.5.4]{fot}.
It then follows from \cite[Theorem 5.5.1]{fot} that $h(X_t)-h(X_0)=M^{[h]}_t$ is a continuous local martingale up to lifetime. Under (H1) the equality is strict, i.e. it holds $P_x$-a.s. for all $x\in\mathbb{R}$. 
The quadratic variation $\langle M^{[h]} \rangle$ of the local martingale $M^{[h]}$ can be identified by calculating its Revuz measure $\mu_{\langle M^{[h]} \rangle}$. We have
$$
\mu_{\langle M^{[h]} \rangle}=h'(x)^2\rho dx,
$$
which is the same than the Revuz measure of the additive functional $A_t=\int_0^t h'(X_s)^2 ds$. 
We hence obtain by the uniqueness of the Revuz correspondence and (H1) that 
$$
\langle M^{[h]} \rangle_t=\int_0^t h'(X_s)^2 ds, \ t<\zeta
$$
$P_x$-a.s. for all $x\in\mathbb{R}$. Writing $X_t=h^{-1}(Y_t)$ we obtain the final result.
\end{proof}\\ \\
Although, in our case we do not have a classical It\^o-equation, we shall call the function $h$ in Lemma \ref{algebraic0dir} {\it scale function} of the diffusion $((X_t)_{t\ge 0}, (P_x)_{x\in\mathbb{R}})$, and then the corresponding {\it speed measure} is 
$$
\mu(dy)=\frac{2}{h'(y)}dy.
$$
We let further for $x\in \mathbb{R}$
$$
\Phi(x):= \frac{1}{2}\int_0^x h'(z)\int_0^z \mu(dy)\,dz = \int_0^x \frac{h(x)-h(y)}{h'(y)}dy.
$$
Note that $\Phi$ is well defined and continuous, since $h$ is strictly increasing and continuous, and $h'$ is locally bounded and locally bounded away from zero by the assumption   
$\gamma, \overline{\gamma}>0$. Indeed the latter implies $h'(0)=\frac{1}{2}$. \\ \\
For a function $f:\mathbb{R}\to\mathbb{R} $ we define
$$
f(\infty):=\lim_{x\nearrow \infty}f(x)\ \ \mbox{and} \ \ \ f(-\infty):=\lim_{x\searrow -\infty}f(x)
$$
whenever the limits exist in $\mathbb{R}\cup\{\pm \infty\}$.
\begin{prop}\label{conservativenessdir}
The following are equivalent:
\begin{itemize}
 \item[(i)] $((X_t)_{t\ge 0}, (P_x)_{x\in\mathbb{R}})$ is conservative, i.e. $P_x(\zeta=\infty)=1\ \ \forall x\in\mathbb{R}$, where 
$\zeta=\inf\{t>0\,|\, X_t\notin (-\infty,\infty)\}=\inf\{t>0\,|\, Y_t\notin (h(-\infty),h(\infty))\}$
\item[(ii)] $\Phi(-\infty)=\Phi(\infty)=\infty$, i.e. $-\infty$ and $\infty$ are {\it non-exit (inaccessible) boundaries}.
\item[(iii)] There exist $u_n\in D(\cal{E})$, $n\ge 1$, $0\le u_n\nearrow 1$ $dx$-a.e. as $n\to \infty$ such that ${\cal{E}}(u_n,G_1 w)\to 0$ as $n\to \infty$ for some 
$w\in L^2(\mathbb{R},\rho dx)\cap  L^1(\mathbb{R},\rho dx)$ such that $w>0$ a.e. (Here $(G_{\alpha})_{\alpha>0}$ is the resolvent of $({\cal{E}}, D({\cal{E}}))$, see \cite{fot}).
\end{itemize}
\end{prop}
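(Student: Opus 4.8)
I would use (i) as a hub, establishing (i)$\Leftrightarrow$(ii) by Feller's boundary classification applied to the scale transform $Y=h(X)$, and (i)$\Leftrightarrow$(iii) by a short argument based on the resolvent identity. The preliminary step is to reduce (i) to a statement about the form. Exactly as in the proof of Proposition~\ref{consdiff}, $P_{\cdot}(\zeta>t)=T_t1_{\mathbb{R}}$ $dx$-a.e.; since (H1) holds (it is forced by $\gamma,\overline{\gamma}>0$), every point of $\mathbb{R}$ has positive capacity, so any identity valid $dx$-a.e. for the $1$-excessive function $R_11_{\mathbb{R}}$ is valid everywhere, and $T_t1_{\mathbb{R}}=1$ $dx$-a.e. upgrades to $P_x(\zeta=\infty)=1$ for \emph{every} $x$. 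Hence (i) is equivalent to conservativeness of $({\cal E},D({\cal E}))$, i.e. to $T_t1_{\mathbb{R}}=1$, equivalently (using that $t\mapsto T_t1_{\mathbb{R}}$ is non-increasing and $\le 1$) to $G_11_{\mathbb{R}}=1_{\mathbb{R}}$ $dx$-a.e.

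\emph{Step 1: (i)$\Leftrightarrow$(ii).} By Lemma~\ref{algebraic0dir}, $Y_t:=h(X_t)$ is a continuous local martingale up to $\zeta$ with $\langle Y\rangle_t=\int_0^t(h'\circ h^{-1})^2(Y_s)\,ds$. Since $X$ is a regular diffusion up to its lifetime (Proposition~\ref{consdiff} with Remark~\ref{localtime}, using (H1)) and $h:\mathbb{R}\to I:=(h(-\infty),h(\infty))$ is a strictly increasing homeomorphism, $Y$ is a regular one-dimensional diffusion in natural scale on the open interval $I$, with speed measure $m(d\xi)=\frac{2}{(h'\circ h^{-1})^2(\xi)}\,d\xi$, whose exit time from $I$ is $\zeta$. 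By Feller's test for one-dimensional diffusions (see, e.g., \cite{ry05}), $Y$ — hence $X$ — is conservative if and only if both endpoints of $I$ are non-exit, and the right endpoint $h(\infty)$ is non-exit if and only if $v(z):=\int_0^z(z-\xi)\,m(d\xi)\to\infty$ as $z\uparrow h(\infty)$. The substitution $\xi=h(y)$ turns $m(d\xi)$ into $\frac{2}{h'(y)}dy$ and shows $v(h(x))=2\Phi(x)$, so the right endpoint is non-exit iff $\Phi(\infty)=\infty$, and symmetrically the left endpoint is non-exit iff $\Phi(-\infty)=\infty$. (If $h(\infty)=\infty$ then both $v(z)$ and $\Phi(x)$ diverge automatically, consistently with the fact that a continuous local martingale cannot reach $\pm\infty$ in finite time; the case $h(-\infty)=-\infty$ is analogous.) This gives (i)$\Leftrightarrow$(ii).

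\emph{Step 2: (i)$\Leftrightarrow$(iii).} Write $\langle\cdot,\cdot\rangle$ for the $L^2(\mathbb{R};\rho dx)$ inner product. For $w\in L^2\cap L^1$ the resolvent identity $\mathcal{E}_1(G_1w,v)=\langle w,v\rangle$, $v\in D({\cal E})$, gives $\mathcal{E}(v,G_1w)=\langle w-G_1w,\,v\rangle$. If $\{u_n\}$ as in (iii) exists, then letting $n\to\infty$ in $\mathcal{E}(u_n,G_1w)=\langle w-G_1w,u_n\rangle$ and using monotone convergence ($0\le u_n\nearrow1$, $w\ge0$, $G_1w\ge0$) together with self-adjointness of $G_1$ (extended to the $L^1$-$L^\infty$ pairing by truncating $1_{\mathbb{R}}$) yields $0=\langle w,1_{\mathbb{R}}\rangle-\langle w,G_11_{\mathbb{R}}\rangle=\langle w,\,1_{\mathbb{R}}-G_11_{\mathbb{R}}\rangle$; since $w>0$ a.e. and $0\le G_11_{\mathbb{R}}\le1$, this forces $G_11_{\mathbb{R}}=1_{\mathbb{R}}$, i.e. (i). Conversely, if (i) holds, choose $f_n\in L^2\cap L^1$ with $0\le f_n\nearrow1_{\mathbb{R}}$ and set $u_n:=G_1f_n\in D({\cal E})$; then $0\le u_n\nearrow G_11_{\mathbb{R}}=1_{\mathbb{R}}$, and $\mathcal{E}(u_n,G_1w)=\langle f_n-G_1f_n,\,G_1w\rangle\to0$ by dominated convergence, since the integrands are bounded by $2\,G_1w\in L^1$ and tend a.e. to $(1_{\mathbb{R}}-G_11_{\mathbb{R}})G_1w=0$. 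This proves (iii). (Alternatively, (i)$\Leftrightarrow$(iii) is a special case of the general conservativeness criterion for symmetric Dirichlet forms.)

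\textbf{Main obstacle.} The delicate part is Step 1: one must check that $Y=h(X)$ is genuinely a regular diffusion on $I$ up to its exit time, so that Feller's classification applies verbatim; handle the possibly finite endpoints $h(\pm\infty)$; and carry out the change of variables identifying Feller's integral $v$ with $\Phi$. Step 2 is essentially formal once (i) has been reduced to ``$G_11_{\mathbb{R}}=1_{\mathbb{R}}$'' and the resolvent identity is available.
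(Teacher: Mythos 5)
Your proof is correct, and your Step 1 --- Feller's test applied to the natural-scale local martingale $Y=h(X)$ from Lemma \ref{algebraic0dir}, with the substitution $\xi=h(y)$ identifying Feller's integral with $2\Phi$ --- is essentially the paper's own route for (i)$\Leftrightarrow$(ii): the paper simply delegates the details to \cite[Section 6.2]{d}, remarking that only the existence of a good scale function and speed measure is needed. Where you genuinely diverge is in the treatment of (iii). The paper splits that part in two: (iii)$\Rightarrow$(i) is quoted from general Dirichlet form theory (\cite[Theorem 1.6.6 (iii)]{fot}), giving conservativeness quasi-everywhere and then everywhere via (H1); and (ii)$\Rightarrow$(iii) is proved by an explicit construction, with $u_n$ built on $[-n,n]$ from $h$ and the integrals defining $\Phi$, shown to lie in $D({\cal E})$ by mollification, and with ${\cal E}(u_n,G_1w)\to 0$ quoted from \cite[Lemma 3.1]{o92}. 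You instead prove (i)$\Leftrightarrow$(iii) directly from the resolvent identity ${\cal E}(v,G_1w)=\langle w-G_1w,v\rangle$, taking $u_n:=G_1f_n$ with $f_n\nearrow 1_{\mathbb{R}}$ for the forward implication and using $w>0$ a.e.\ together with $0\le G_11_{\mathbb{R}}\le 1$ to force $G_11_{\mathbb{R}}=1_{\mathbb{R}}$ for the converse. Your version is shorter, self-contained, independent of the scale function in that step (it is really the general conservativeness criterion for symmetric Dirichlet forms, as you observe), and replaces two external citations by a short computation; the paper's version buys explicit approximating functions tied to $\Phi$, which makes the bridge between the probabilistic condition (ii) and the analytic condition (iii) visible and is reused in spirit in the proof of Theorem \ref{recurrence}. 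Both arguments rest on the same standard facts you flag as the main obstacle (regularity of the diffusion so that Feller's classification applies, and the $L^1$--$L^\infty$ extension of $G_1$ by truncation), so I see no gap.
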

\begin{proof}
$(i)\Leftrightarrow (ii)$ is the well-known {\it Feller's test of non-explosions}. Although, in our case we do not have a classical It\^o-equation, 
it can be carried out exactly as in \cite[Section 6.2]{d}. Indeed, for its proof we mainly need the existence of a good scale function and speed measure, which is here the case.
Further, it is well-known in the theory of Dirichlet forms that $(iii)$ implies $P_x(\zeta=\infty)=1$ $\forall x\in\mathbb{R}\setminus N$, where $cap(N)=0$ (see \cite[Theorem 1.6.6 (iii)]{fot}). 
Under (H1) we must have $N=\emptyset$, hence $(iii)\Rightarrow (i)$. In order to see $(ii)\Rightarrow (iii)$, we can define $(u_n)_{n\ge 1}$ as follows. Let 
$a_n:=\int_{-n}^{0} \frac{h(y)-h(x)}{h'(y)}dy$, $b_n:=\int_{0}^{n}\frac{h(x)-h(y)}{h'(y)}dy$ and for $n\ge 1$

\[u_n(x):= \left\{ \begin{array}{r@{\quad}l}
 1-\frac{1}{a_n}\int_{x}^{0}  \frac{h(y)-h(x)}{h'(y)}dy&\mbox{if }\ x\in[-n,0], \\ 
 1-\frac{1}{b_n}\int_{0}^{x} \frac{h(x)-h(y)}{h'(y)}dy&\mbox{if }\ x\in [0,n],\\ 
 0&\mbox{elsewhere}. \end{array} \right. \] \\

Clearly $0\le u_n\nearrow 1$ $dx$-a.e. as $n\to \infty$. Fix a standard Dirac sequence $(\varphi_{\varepsilon})_{\varepsilon>0}$ and define 
$u_n^k:=\varphi_{\frac{1}{k}}*u_n$, $k\ge 1$. Then  $u_n^k\in C_0^{\infty}(\mathbb{R})$ and by standard properties of the convolution product 
one can easily see that $u_n^k \to u_n$ in $D({\cal E})$ as $k\to \infty$. Hence $u_n\in D({\cal E})$. For
$$
\lim_{n\to \infty}{\cal E}(u_n, G_1 w)= 0,
$$
see e.g. \cite[Lemma 3.1]{o92}.
\end{proof}
\begin{cor}\label{conservativeness2}
Property $(ii)$ of Proposition \ref{conservativenessdir} holds, if and only if
\begin{eqnarray*}
\lim_{n\to \infty}\sum_{l\le n}(r_{l+1}-r_{l})\left \{\frac{1}{2}(r_{l+1}-r_{l})+
\frac{1}{\overline{\gamma}_{l+1}}\sum_{k\le l-1}\overline{\gamma}_{k+1}(r_{k+1}-r_{k})\right \}=\infty
\end{eqnarray*}
and
\begin{eqnarray*}
\lim_{n\to \infty}\sum_{m\ge -n}(l_{m+1}-l_{m})\left \{ \frac{1}{2}(l_{m+1}-l_{m})+
\frac{1}{{\gamma}_{m+1}}\sum_{k\ge m+1} {\gamma}_{k+1}(l_{k+1}-l_{k})\right \}=\infty,
\end{eqnarray*}
i.e. in this case we have non-explosion for every starting point.
\end{cor}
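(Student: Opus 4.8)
The plan is to evaluate $\Phi(\infty)$ and $\Phi(-\infty)$ explicitly by exploiting that $h'$ is constant on each partition interval, and to check that the two resulting expressions are exactly the series in the statement. First I would record the monotonicity of $\Phi$: since $\mu(dy)=\tfrac{2}{h'(y)}dy$ is a positive measure, the integrand in $\Phi(x)=\tfrac12\int_0^x h'(z)\big(\int_0^z\mu(dy)\big)dz$ is nonnegative for $z>0$, while for $z<0$ we have $\int_0^z\mu(dy)=-\int_z^0\mu(dy)\le 0$ and $\int_0^x=-\int_x^0$, so in both cases $\Phi\ge 0$, $\Phi(0)=0$, and $\Phi$ is nondecreasing on $[0,\infty)$ and nonincreasing on $(-\infty,0]$. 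Hence $\Phi(\pm\infty)$ exist in $[0,\infty]$, and because $r_{n+1}\to\infty$ and $l_{-n}\to-\infty$ they are recovered as the monotone limits $\Phi(\infty)=\lim_n\Phi(r_{n+1})$ and $\Phi(-\infty)=\lim_n\Phi(l_{-n})$.

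Next I would compute $\Phi(r_{n+1})$ by splitting $[0,r_{n+1}]$ along the partition $(r_l)$. On $(r_l,r_{l+1})$ the derivative $h'$ is the constant $c_l:=\tfrac{(1-\alpha)\overline{\gamma}}{\overline{\gamma}_{l+1}}$, so $\int_0^{r_l}\tfrac{2}{h'(y)}\,dy=\tfrac{2}{(1-\alpha)\overline{\gamma}}\sum_{k\le l-1}\overline{\gamma}_{k+1}(r_{k+1}-r_k)=\tfrac{2}{(1-\alpha)\overline{\gamma}}\int_0^{r_l}\rho\,dy<\infty$ under the running hypothesis $\rho\in L^1_{loc}(\mathbb{R};dx)$. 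Writing $\int_0^z\mu(dy)=\int_0^{r_l}\tfrac{2}{h'(y)}dy+\tfrac{2}{c_l}(z-r_l)$ for $z\in(r_l,r_{l+1})$, an elementary integration of (constant)$\times$(affine) gives
\begin{eqnarray*}
\int_{r_l}^{r_{l+1}} h'(z)\Big(\int_0^z\mu(dy)\Big)\,dz
&=& c_l\Big(\int_0^{r_l}\tfrac{2}{h'(y)}\,dy\Big)(r_{l+1}-r_l) + (r_{l+1}-r_l)^2 \\
&=& 2(r_{l+1}-r_l)\Big\{\tfrac12(r_{l+1}-r_l)+\tfrac{1}{\overline{\gamma}_{l+1}}\sum_{k\le l-1}\overline{\gamma}_{k+1}(r_{k+1}-r_k)\Big\},
\end{eqnarray*}
where the factor $c_l$ cancels against the $\tfrac{1}{c_l}$ produced by $\int_0^{r_l}\tfrac{2}{h'}$. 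Summing over $l\le n$, dividing by two, and letting $n\to\infty$ by monotone convergence yields $\Phi(\infty)=\lim_{n\to\infty}\sum_{l\le n}(r_{l+1}-r_l)\{\tfrac12(r_{l+1}-r_l)+\tfrac{1}{\overline{\gamma}_{l+1}}\sum_{k\le l-1}\overline{\gamma}_{k+1}(r_{k+1}-r_k)\}$; in particular $\Phi(\infty)=\infty$ iff the first series diverges.

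The negative half-line is treated identically: one writes $\Phi(l_{-n})=\tfrac12\sum_{k\ge -n}\int_{l_k}^{l_{k+1}}h'(z)\big(\int_z^0\mu(dy)\big)dz$, uses $h'\equiv\tfrac{\alpha\gamma}{\gamma_{k+1}}$ on $(l_k,l_{k+1})$ together with $\int_{l_{k+1}}^0\tfrac{2}{h'(y)}dy=\tfrac{2}{\alpha\gamma}\sum_{j\ge k+1}\gamma_{j+1}(l_{j+1}-l_j)=\tfrac{2}{\alpha\gamma}\int_{l_{k+1}}^0\rho\,dy<\infty$, and obtains after the same cancellation $\int_{l_k}^{l_{k+1}}h'(z)\big(\int_z^0\mu(dy)\big)dz=2(l_{k+1}-l_k)\{\tfrac12(l_{k+1}-l_k)+\tfrac{1}{\gamma_{k+1}}\sum_{j\ge k+1}\gamma_{j+1}(l_{j+1}-l_j)\}$. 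Summing over $k\ge -n$, halving, and passing to the limit expresses $\Phi(-\infty)$ as the second series, so $\Phi(-\infty)=\infty$ iff that series diverges. Combining the two halves with the equivalence (i)$\Leftrightarrow$(ii) of Proposition \ref{conservativenessdir} gives the corollary.

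As for the main obstacle, there is no genuine difficulty here — only careful bookkeeping. The two points that require attention are: (a) the accumulation of the partition at $0$, which makes each inner sum $\sum_{k\le l-1}\overline{\gamma}_{k+1}(r_{k+1}-r_k)$ (resp.\ $\sum_{j\ge k+1}\gamma_{j+1}(l_{j+1}-l_j)$) an infinite series, but these equal $\int_0^{r_l}\rho\,dy$ (resp.\ $\int_{l_{k+1}}^0\rho\,dy$) and are therefore finite under $\rho\in L^1_{loc}$; and (b) the interchange of summation and integration, which is legitimate because every term is nonnegative (monotone convergence). Everything else reduces to the one-line evaluation of $\int_{r_l}^{r_{l+1}}(\text{constant})\cdot(\text{affine})\,dz$ and the matching of indices with the displayed conditions.
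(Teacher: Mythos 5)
Your proposal is correct and follows essentially the same route as the paper: split $\Phi(r_{n+1})$ and $\Phi(l_{-n})$ along the partition intervals where $h'$ is constant, evaluate each piece explicitly (the constants $\tfrac{(1-\alpha)\overline{\gamma}}{\overline{\gamma}_{l+1}}$, $\tfrac{\alpha\gamma}{\gamma_{k+1}}$ cancel exactly as you note), and identify the resulting series with the two displayed conditions via Proposition \ref{conservativenessdir}(ii). Your explicit remarks on monotonicity of $\Phi$ and on the finiteness of the inner sums near the accumulation point are details the paper leaves implicit, but the computation and conclusion coincide.
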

\begin{proof}
We get for any $n\in \mathbb{Z}$
\begin{eqnarray*}
\Phi(r_{n+1}) & = & \int_0^{r_{n+1}} h'(z)\int_0^z \frac{1}{h'(y)}dy\,dz\\
& = & \sum_{l\le n}\int_{r_l}^{r_{l+1}} h'(z) \left \{\sum_{k \le l-1}\int_{r_k}^{r_{k+1}}\frac{1}{h'(y)}dy+\int_{r_l}^{z}\frac{1}{h'(y)}dy\right \}dz\\
& = & \sum_{l\le n}\left \{ \frac{1}{2}(r_{l+1}-r_{l})^2+\sum_{k\le l-1}\frac{\overline{\gamma}_{k+1}}{\overline{\gamma}_{l+1}}(r_{k+1}-r_{k})(r_{l+1}-r_{l})\right \}\\
\end{eqnarray*}
and similarly
\begin{eqnarray*}
\Phi(l_{n}) & = & \int_{l_n}^{0} h'(z)\int_z^0 \frac{1}{h'(y)}dy\,dz\\
& = & \sum_{m\ge n}\left \{ \frac{1}{2}(l_{m+1}-l_{m})^2+\sum_{k\ge m+1}\frac{{\gamma}_{k+1}}{{\gamma}_{m+1}}(l_{k+1}-l_{k})(l_{m+1}-l_{m})\right \}.\\
\end{eqnarray*}
Hence
$$
\Phi(\infty)=\lim_{n\to\infty}\Phi(r_{n+1})=\infty\ \ \mbox{ and } \ \ \Phi(-\infty)=\lim_{n\to -\infty}\Phi(l_{n})=\infty
$$
hold, if and only if the two conditions stated in the lemma are satisfied.
\end{proof}\\
For $l,m\in\mathbb{Z}$ let
$$
\overline{v}_l:=\left \{ \frac{1}{2}(r_{l+1}-r_{l})+\frac{1}{\overline{\gamma}_{l+1}}\sum_{k\le l-1}\overline{\gamma}_{k+1}(r_{k+1}-r_{k})\right \}
$$
and
$$
v_m:=\left \{ \frac{1}{2}(l_{m+1}-l_{m})+\frac{1}{{\gamma}_{m+1}}\sum_{k\ge m+1} {\gamma}_{k+1}(l_{k+1}-l_{k})\right \}.
$$
It follows immediately from Corollary \ref{conservativeness2} that a sufficient condition for conservativeness is given by: $\exists \delta>0$ such that
$$
\mbox{{\it either} } r_{l+1}-r_l\ge \delta  \mbox{ for infinitely many } l
\mbox{ {\it or} } \exists l_0\in\mathbbm{Z} \mbox{ with }  \overline{v}_l\ge \delta \mbox{ for all }l\ge l_0
$$
and
$$
\mbox{{\it either} } l_{k+1}-l_k\ge \delta  \mbox{ for infinitely many } k 
\mbox{ {\it or} } \exists m_0\in\mathbbm{Z} \mbox{ with } v_m\ge \delta \mbox{ for all }m\le m_0.
$$
For instance, if there exists $k_0,l_0\in \mathbb{Z}$ with $\inf_{l\ge l_0}(r_{l+1}-r_l)\ge \delta$ and  $\inf_{k\le k_0}(l_{k+1}-l_k)\ge \delta$, then conservativeness holds. 
However, under the conditions (\ref{semi1}) and (\ref{hbddvargamma}) conservativeness is suitably described as in the following remark.
\begin{rem}\label{conservativeglobalprop}
The conditions (\ref{semi1}) and (\ref{hbddvargamma}) are local conditions as they depend only on the local behavior of $\rho$ around the accumulation point zero. In particular (\ref{hbddvargamma}) is crucial for deriving pathwise uniqueness properties (see Theorem \ref{uniqueness} below). Note that the assumption (H0) (resp. (\ref{semi1})) in Theorem \ref{uniqueness} below can be seen as a formal condition  that are used to ensure uniqueness up to infinity (resp. the semimartingale property). Under the local conditions (\ref{semi1}) and (\ref{hbddvargamma}) the conditions in Corollary \ref{conservativeness2} are global ones and depend only on the behavior of $\rho$ outside arbitrarily large compact sets that contain the accumulation point. In fact, for any $n_0\in \mathbb{N}$, (\ref{semi1}) and (\ref{hbddvargamma}) 
imply that $(\gamma_k)_{k>-n_0}$ and $(\overline{\gamma}_{k})_{k<n_0}$  are bounded below and above by strictly positive constants and moreover 
$\sum_{l<n_0}(r_{l+1}-r_{l})=r_{n_0}$,  $\sum_{k>-n_0}(l_{k+1}-l_{k})=l_{-n_0+1}$. From this it is then not difficult to see that the conditions of Corollary \ref{conservativeness2} are equivalent to the following ones: 
\begin{equation}\label{global1}\tag{C0}
\lim_{n\to \infty}\sum_{l=n_0}^{n}\frac{r_{l+1}-r_{l}}{\overline{\gamma}_{l+1}}\left (\sum_{k=n_0}^{l}\overline{\gamma}_{k+1}(r_{k+1}-r_{k})\right )=\infty
\end{equation}
and
\begin{equation}\label{global2}\tag{C1}
\lim_{n\to \infty}\sum_{m=-n}^{-n_0}\frac{l_{m+1}-l_{m}}{{\gamma}_{m+1}}\left ( \sum_{k= m}^{-n_0} {\gamma}_{k+1}(l_{k+1}-l_{k})\right )=\infty,
\end{equation}
for one and hence any $n_0\in \mathbb{N}$, where as usually $\sum_{k=l}^{m}:=0$ for $m<l$. \\
\end{rem}
\begin{exam}\label{counterexample} 
Let us give an example where we have explosion. Let $r_l:=\sum_{k=1}^l \frac{1}{k}$, $l\ge 1$ and $\overline{\gamma}_{k+1}=C^{k}(k+1)$, $k\ge 1$,  where $C>1$ is some constant, and let the remaining $r_l,\overline{\gamma}_{k+1}, l_k, \gamma_{k+1}$ be just chosen such that conditions (\ref{semi1}) and (\ref{hbddvargamma}) are satisfied. Then 
\begin{eqnarray*}
\sum_{l=1}^{\infty}\frac{r_{l+1}-r_{l}}{\overline{\gamma}_{l+1}}\left (\sum_{k=1}^{l}\overline{\gamma}_{k+1}(r_{k+1}-r_{k})\right ) & = & 
\sum_{l=1}^{\infty}\frac{1}{C^l (l+1)^2}\left (\frac{C^{l+1}-C}{C-1}\right )<\infty,\\
\end{eqnarray*}
and so according to (\ref{global1}) in Remark \ref{conservativeglobalprop} with $n_0=1$ it follows that we cannot have conservativeness.
Note that in this example, $(r_l)_{l\in\mathbb{Z}}$ has an accumulation point at \lq\lq infinity\rq\rq\ and the skew reflection is 
with $\overline{\alpha}_{k}\approx \frac{1}{1+\frac{1}{C}}>\frac{1}{2}+\varepsilon$ for  $k\ge N$ for some $N\in\mathbb{N}$. 
\end{exam}
\begin{lem}\label{algebraic0}
Suppose that additionally to (\ref{semi1}) and (\ref{hbddvargamma}), $({\cal E},D({\cal E}))$ is conservative. Then 
$(Y_t:=h(X_t))_{t\ge 0}$ satisfies $P_x$-a.s
\begin{eqnarray}\label{transformation}
Y_t & = & h(x)+\int_0^t \sum_{k\in \mathbb{Z}}\left (\frac{\alpha\gamma}{\gamma_{k+1}} 1_{[l_k,l_{k+1})}+\frac{(1-\alpha)\overline{\gamma}}{\overline{\gamma}_{k+1}} 1_{[r_k,r_{k+1})}\right )\circ h^{-1}(Y_s)dW_s \\ \nonumber
\end{eqnarray}
for all $t\ge 0$ and all $x\in \mathbb{R}$. 
\end{lem}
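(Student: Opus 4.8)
The plan is to bootstrap Lemma~\ref{algebraic0dir}: there $Y_t=h(X_t)$ was already identified as a continuous local martingale (up to $\zeta$) with bracket $\langle Y\rangle_t=\int_0^t(h'\circ h^{-1})^2(Y_s)\,ds$, and under the extra conservativeness hypothesis I want to realise its martingale part as an It\^o integral against the driving Brownian motion $W=M^{[id]}$ of (\ref{weak1zeta}). First I would record the preliminaries. Since (H1) is in force throughout Section~\ref{three}, conservativeness of $({\cal E},D({\cal E}))$ gives $P_x(\zeta=\infty)=1$ for every $x\in\mathbb{R}$ by Proposition~\ref{consdiff}(i), so Lemma~\ref{algebraic0dir} and the decomposition (\ref{weak1zeta}) hold for all $t\ge 0$ and all $x\in\mathbb{R}$. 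Moreover $h$ is a continuous, strictly increasing bijection of $\mathbb{R}$ onto the open interval $(h(-\infty),h(\infty))$, since all its slopes $\frac{\alpha\gamma}{\gamma_{k+1}}$, $\frac{(1-\alpha)\overline{\gamma}}{\overline{\gamma}_{k+1}}$ are strictly positive under $\gamma,\overline{\gamma}>0$; the process $Y$ never leaves that interval and $h^{-1}$ is well defined and locally Lipschitz there, so $X_s=h^{-1}(Y_s)$ and $h'(X_s)=(h'\circ h^{-1})(Y_s)$ are meaningful.

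Write $h(X_t)-h(X_0)=M^{[h]}_t$ (the zero-energy drift part vanishes by (\ref{itotanakadir}) and \cite[Theorem~5.5.4]{fot}, as in Lemma~\ref{algebraic0dir}) and $X_t-X_0=W_t+N^{[id]}_t$ with $W=M^{[id]}$; recall $\langle M^{[id]}\rangle_t=t$ and $\langle M^{[h]}\rangle_t=\int_0^t h'(X_s)^2\,ds$. The one genuinely new ingredient is the cross bracket $\langle M^{[h]},M^{[id]}\rangle$, which I would obtain by polarisation. Since $h+id\in D({\cal E})_{loc}$ with symmetric derivative $h'+1$, the same Revuz/energy-measure computation as in the proof of Lemma~\ref{algebraic0dir} shows that $\langle M^{[h+id]}\rangle$ has Revuz measure $(h'(x)+1)^2\rho(x)\,dx$, whence $\langle M^{[h+id]}\rangle_t=\int_0^t(h'(X_s)+1)^2\,ds$; subtracting gives
\[
\langle M^{[h]},M^{[id]}\rangle_t=\tfrac12\bigl(\langle M^{[h+id]}\rangle_t-\langle M^{[h]}\rangle_t-\langle M^{[id]}\rangle_t\bigr)=\int_0^t h'(X_s)\,ds .
\]
(Equivalently one may invoke the bilinear energy measure $\mu_{\langle h,\,id\rangle}=h'\rho\,dx$ together with uniqueness of the Revuz correspondence; cf. \cite[Chapter~5]{fot}.)

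Now set $Z_t:=M^{[h]}_t-\int_0^t h'(X_s)\,dW_s$; the It\^o integral is well defined because $h'$ is locally bounded and $X$ continuous, so $\int_0^t h'(X_s)^2\,ds<\infty$ $P_x$-a.s., and it is insensitive to the pointwise version of $h'$ since $X$ spends zero Lebesgue time at any single point. Using $\langle W\rangle_t=t$ and $d\langle M^{[h]},W\rangle_s=h'(X_s)\,ds$ from the previous step,
\[
\langle Z\rangle_t=\int_0^t h'(X_s)^2\,ds-2\int_0^t h'(X_s)^2\,ds+\int_0^t h'(X_s)^2\,ds=0 ,
\]
hence $Z\equiv 0$, i.e. $M^{[h]}_t=\int_0^t h'(X_s)\,dW_s$ $P_x$-a.s. for all $x\in\mathbb{R}$ (strictness for every $x$ uses (H1), exactly as in Lemma~\ref{algebraic0dir}). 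Finally I would substitute $X_s=h^{-1}(Y_s)$ and replace $h'$ by the piecewise constant function $\sum_{k\in\mathbb{Z}}\bigl(\frac{\alpha\gamma}{\gamma_{k+1}}1_{[l_k,l_{k+1})}+\frac{(1-\alpha)\overline{\gamma}}{\overline{\gamma}_{k+1}}1_{[r_k,r_{k+1})}\bigr)$: the two functions coincide off the countable (hence Lebesgue-null) set $\{0,l_k,r_k:k\in\mathbb{Z}\}$, and by the occupation times formula (cf. the end of Corollary~\ref{countablyskewbm}, valid for all $x$ because $\langle X\rangle_t=t$) the paths of $X$ spend zero time there, so the two integrands agree $ds$-a.e. along $P_x$-almost every path; this yields (\ref{transformation}).

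The step I expect to be the main obstacle is the identification of $\langle M^{[h]},M^{[id]}\rangle$, that is, making rigorous the passage from Fukushima's martingale additive functionals to the classical It\^o integral $\int_0^\cdot h'(X_s)\,dW_s$: one has to verify that $h$, $id$ and $h+id$ all lie in $D({\cal E})_{loc}$ and that the energy/Revuz-measure bookkeeping is legitimate there, which is precisely the kind of argument already carried out in the proof of Theorem~\ref{semimart}(iii). Once the cross-bracket relation is in hand, the vanishing of $\langle Z\rangle$ and the substitution step are entirely routine.
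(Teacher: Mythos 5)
Your proof is correct, but it follows a genuinely different route from the paper's. The paper proves (\ref{transformation}) by applying the symmetric It\^o--Tanaka formula to $h$ along the semimartingale equation (\ref{weak3}): one expands $h(X_t)$ into the It\^o integral $\int_0^t h'(X_s)\,dX_s$ plus the measure $\tfrac12\int_{\mathbb{R}}\ell^a_t(X)\,h''(da)$, and then checks by direct computation (display (\ref{itotanaka})) that at every point $l_k$, $r_k$ and $0$ the contribution $(2\alpha_k-1)\tfrac{h'(+)+h'(-)}{2}$ coming from the drift $N$ exactly cancels the jump term $\tfrac{h'(+)-h'(-)}{2}$ from $h''$ --- this is precisely the statement that $h$ puts $X$ in natural scale --- leaving $h(X_t)=h(x)+\int_0^t h'(X_s)\,dW_s$, after which (\ref{n3}) justifies replacing $h'$ by the half-open-interval step function. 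You instead stay entirely on the Dirichlet-form side: starting from Lemma \ref{algebraic0dir} you know $h(X_t)-h(x)=M^{[h]}_t$ is a local martingale, you identify the cross-bracket $\langle M^{[h]},M^{[id]}\rangle_t=\int_0^t h'(X_s)\,ds$ by polarisation of energy measures and uniqueness of the Revuz correspondence (legitimate here, since $h,\,id,\,h+id\in D({\cal E})_{loc}$ and the energy measure of $u$ is $(u')^2\rho\,dx$ for this form, exactly the bookkeeping already done in Theorem \ref{semimart}(iii)), and then conclude $M^{[h]}_t=\int_0^t h'(X_s)\,dW_s$ because the difference is a continuous local martingale with vanishing quadratic variation. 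Your final substitution step, invoking the occupation-times formula and (\ref{n3}) to change the integrand on the Lebesgue-null set $\{0,l_k,r_k\}$, coincides with the paper's. The trade-off: the paper's argument makes the cancellation of the skew local times visible and only uses classical stochastic calculus once (\ref{weak3}) is in hand, whereas yours bypasses all semimartingale local-time manipulations and shows that the representation of $M^{[h]}$ as a stochastic integral against $W=M^{[id]}$ is a purely energy-measure fact; both are complete proofs of the lemma.
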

\begin{proof}
Let $h''(da)$ the signed measure that is induced by the second derivative of $h$.
Then applying the symmetric version of \cite[VI. (1.5) Theorem]{ry05} with $h$ and (\ref{weak3}) we obtain $P_x$-a.s.
\begin{eqnarray}\label{itotanaka}
h(X_t) & = & h(x)+\int_0^t h'(X_s)dX_s +\frac{1}{2}\int_{\mathbb{R}} \ell_t^a(X)h''(da) \nonumber\\
& = & h(x)+ \int_0^t h'(X_s)dW_s+(2\alpha-1)\int_{0}^t h'(X_s)d\ell_s^0(X)\nonumber \\
&+& \sum_{k\in \mathbb{Z}}\left \{(2\alpha_k -1)\int_{0}^t h'(X_s)d\ell_s^{l_k}(X)+(2\overline{\alpha}_k-1)\int_{0}^t h'(X_s)d\ell_s^{r_k}(X)\right \}
\nonumber \\
&+& \sum_{k\in \mathbb{Z}}\left \{\frac{\frac{\alpha\gamma}{\gamma_{k+1}}-\frac{\alpha\gamma}{\gamma_{k}}}{2}\ell_t^{l_k}(X)+\frac{\frac{(1-\alpha)\overline{\gamma}}{\overline{\gamma}_{k+1}}-\frac{(1-\alpha)\overline{\gamma}}{\overline{\gamma}_{k}}}{2}\ell_t^{r_k}(X)\right \}
+ \frac{(1-\alpha)-\alpha}{2}\ell_t^{0}(X)
\nonumber \\
& = & h(x)+ \int_0^t h'(X_s)dW_s+\left \{(2\alpha-1)\frac{1}{2}+\frac{1-2\alpha}{2}\right \}\ell_t^0(X)\nonumber \\
& & + \sum_{k\in \mathbb{Z}}\left ((2\alpha_k -1)\frac{\frac{\alpha\gamma}{\gamma_{k+1}}+\frac{\alpha\gamma}{\gamma_{k}}}{2}+\frac{\frac{\alpha\gamma}{\gamma_{k+1}}-\frac{\alpha\gamma}{\gamma_{k}}}{2}\right )\ell_s^{l_k}(X)
\nonumber \\
& &\ + \sum_{k\in \mathbb{Z}}\left ((2\overline{\alpha}_k-1)\frac{\frac{(1-\alpha)\overline{\gamma}}{\overline{\gamma}_{k+1}}+\frac{(1-\alpha)\overline{\gamma}}{\overline{\gamma}_{k}}}{2}+\frac{\frac{(1-\alpha)\overline{\gamma}}{\overline{\gamma}_{k+1}}-\frac{(1-\alpha)\overline{\gamma}}{\overline{\gamma}_{k}}}{2}\right )\ell_t^{r_k}(X) \nonumber \\ 
& = & h(x)+ \int_0^t h'(X_s)dW_s.\\ \nonumber
\end{eqnarray}
Now the statement follows from (\ref{n3}).
\end{proof}
\begin{theo}[Starting from the Dirichlet form]\label{uniqueness}
Suppose that additionally to (\ref{semi1}) and (\ref{hbddvargamma}), $({\cal E},D({\cal E}))$ is conservative.
Then strong uniqueness holds for (\ref{weak1}) and (\ref{weak3}), i.e pathwise uniqueness holds for (\ref{weak1}) and (\ref{weak3}) and 
there exists a unique strong solution to (\ref{weak1}) and (\ref{weak3}).
\end{theo}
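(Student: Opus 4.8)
The plan is to reduce the uniqueness question for the singular local-time equations (\ref{weak1}) and (\ref{weak3}) to a classical Itô SDE via the scale function $h$, exactly as in the Yamada--Watanabe / Le Gall program for one-dimensional equations with local times. First I would apply Lemma \ref{algebraic0}: since $({\cal E},D({\cal E}))$ is conservative, any (weak) solution $X$ of (\ref{weak3}) gives rise to $Y_t:=h(X_t)$ which solves the driftless equation
\begin{equation}\label{YSDE}
Y_t = h(x) + \int_0^t \sigma(Y_s)\,dW_s,\qquad \sigma:=\Bigl(\sum_{k\in\mathbb Z}\bigl(\tfrac{\alpha\gamma}{\gamma_{k+1}}1_{[l_k,l_{k+1})}+\tfrac{(1-\alpha)\overline\gamma}{\overline\gamma_{k+1}}1_{[r_k,r_{k+1})}\bigr)\Bigr)\circ h^{-1}.
\end{equation}
Conversely, because $h$ is a continuous strictly increasing bijection of $\mathbb R$ (here conservativeness guarantees $h(\pm\infty)=\pm\infty$, so $h^{-1}$ is defined on all of $\mathbb R$) and is the difference of two convex functions, the symmetric Itô--Tanaka formula applied to $h^{-1}$ turns any solution of (\ref{YSDE}) back into a solution of (\ref{weak3}); the bookkeeping of the resulting local-time coefficients is the reverse of the computation (\ref{itotanaka}) in Lemma \ref{algebraic0}. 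Hence pathwise uniqueness (and strong existence) for (\ref{weak1})/(\ref{weak3}) is equivalent to pathwise uniqueness (and strong existence) for (\ref{YSDE}).

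Next I would establish pathwise uniqueness for (\ref{YSDE}) by the one-dimensional Engelbert--Schmidt / Le Gall criterion: it suffices that $\sigma$ be bounded, bounded away from $0$ on compacts, and of locally bounded variation (or, slightly more generally, that $\int |\sigma(y+\cdot)-\sigma(y-\cdot)|$-type moduli be integrable — Le Gall's condition \cite{lg84}). Boundedness of $\sigma$ away from $0$ and $\infty$ on compact sets follows from $\gamma,\overline\gamma>0$ together with (\ref{semi1}) and (\ref{hbddvargamma}), which force $(\gamma_k)_{k\ge 0}$, $(\overline\gamma_k)_{k\le 0}$ to be bounded above and below by positive constants; and (\ref{hbddvargamma}) is precisely the statement that $h'$ — hence $\frac{\alpha\gamma}{\gamma_{k+1}}$, $\frac{(1-\alpha)\overline\gamma}{\overline\gamma_{k+1}}$ as functions on the real line — is locally of bounded variation, which transfers through the bi-Lipschitz-on-compacts map $h^{-1}$ to give $\sigma\in BV_{loc}$. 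With $\sigma$ locally of bounded variation, bounded and non-degenerate, Le Gall's comparison-of-local-times argument (comparing $\ell^a(Y)-\ell^a(Y')$ for two solutions $Y,Y'$ driven by the same $W$, using $\int_{\mathbb R}\frac{1}{\sigma^2(a)}|\ell^a_t(Y)-\ell^a_t(Y')|\,d\mu(a)$ as a functional that one shows to vanish via the occupation times formula and Gronwall) yields $Y\equiv Y'$, i.e. pathwise uniqueness.

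Finally, from weak existence (the Dirichlet-form process of Corollary \ref{countablyskewbm}, run up to $\zeta=\infty$ by conservativeness, provides a weak solution of (\ref{weak1}) and (\ref{weak3})) together with pathwise uniqueness, the Yamada--Watanabe theorem gives existence of a unique strong solution; since $h$ and $h^{-1}$ are deterministic bijections this strong solution property passes between (\ref{YSDE}) and (\ref{weak1})/(\ref{weak3}) without loss, and (\ref{n3}) ensures the stochastic-integral terms against $W$ that appear in the Tanaka bookkeeping genuinely vanish. The main obstacle I expect is the reverse direction of the scale transformation and the precise verification of Le Gall's hypothesis for $\sigma$: one must check carefully that the symmetric Itô--Tanaka formula for the convex-difference function $h^{-1}$ reproduces exactly the local-time coefficients $2\alpha_k-1$, $2\overline\alpha_k-1$, $2\alpha-1$ at the countably many points $l_k,r_k,0$ (including correct handling of the accumulation point $0$, where $h'(0)=\frac12$ and $\ell^0(X)$ may or may not vanish), and that the infinite sum of these contributions converges — this is where (\ref{semi1}) and (\ref{hbddvargamma}) are used in an essential, non-cosmetic way, and where the argument genuinely goes beyond the no-accumulation-point setting of \cite{lg84}.
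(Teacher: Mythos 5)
Your proposal follows essentially the same route as the paper's proof: pass to the driftless local martingale $Y=h(X)$ via Lemma \ref{algebraic0}, apply Le Gall's pathwise uniqueness criterion (the paper cites \cite[Remarques: b), p.~21]{lg83}) to $dY=\widetilde{\sigma}(Y)\,dW$ with $\widetilde{\sigma}$ locally bounded away from zero and locally of finite (quadratic) variation --- which is exactly what $\gamma,\overline{\gamma}>0$ and (\ref{hbddvargamma}) deliver --- then pull back through $h^{-1}$ and invoke Yamada--Watanabe, using (\ref{n3}) for the Tanaka bookkeeping. One small correction: conservativeness is equivalent to $\Phi(\pm\infty)=\infty$ (Proposition \ref{conservativenessdir}), not to $h(\pm\infty)=\pm\infty$, which is the recurrence criterion of Theorem \ref{recurrence}(ii); this slip is harmless here, since $h^{-1}$ only needs to be defined on the image $h(\mathbb{R})=(h(-\infty),h(\infty))$, where $Y$ lives by non-explosion, and the verification of Le Gall's hypothesis is purely local around $0$.
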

\begin{proof}
Let  $\widetilde{\sigma}:=\sum_{k\in \mathbb{Z}}\left (\frac{\alpha\gamma}{\gamma_{k+1}} 1_{[l_k,l_{k+1})}+\frac{(1-\alpha)\overline{\gamma}}{\overline{\gamma}_{k+1}} 1_{[r_k,r_{k+1})}\right )\circ h^{-1}$, $\sigma:=\widetilde{\sigma}\circ h$, and $h$ like in Lemma \ref{algebraic0}. 
By \cite[Remarques: b), p. 21]{lg83} (see also \cite[IX.(3.5) Theorem iii) and (3.13) Exercise]{ry05}, or even \cite{nakao} that we could use with a localization procedure), we know that pathwise uniqueness holds for (\ref{transformation}), if $\widetilde{\sigma}$ is locally bounded away from zero and locally of finite 
quadratic variation. Of course, it is enough to check this in a neighborhood of zero and for $\sigma$ instead of $\widetilde{\sigma}$, since $h^{-1}$ is strictly increasing and continuous in a neighborhood of zero and $h^{-1}(0)=0$.
Since 
$$
\lim_{k\to -\infty}\frac{(1-\alpha)\overline{\gamma}}{\overline{\gamma}_{k+1}}=1-\alpha,\ \ \ \lim_{k\to \infty}\frac{\alpha\gamma}{\gamma_{k+1}}=\alpha,
$$ 
and $\alpha\in (0,1)$, we clearly have that $\sigma$ is locally bounded away from zero in any neighborhood of zero. If (\ref{hbddvargamma}) is satisfied, then $\sigma$ is locally of finite variation around zero, hence in particular locally of finite quadratic variation around zero. Thus the result follows by \cite[Remarques: b), p. 21]{lg83}. 
Since $h$ is a continuous bijection on its image with $h(0)=0$, and 
$X:=h^{-1}(Y)$  with $Y$ like in (\ref{transformation}) solves (\ref{weak3}), pathwise uniqueness also holds for (\ref{weak3}). By the Yamada-Watanabe Theorem there exists a unique strong solution 
$Y$ to (\ref{transformation}), hence strong existence and pathwise uniqueness also holds for $X:=h^{-1}(Y)$. Since (\ref{weak3}) is just (\ref{weak1}) rewritten with the symmetric local times, strong uniqueness also holds for (\ref{weak1}).
\end{proof}\\ \\
In the following theorem, we do not assume from the beginning
(\ref{semi1}) and (\ref{hbddvargamma}), which were in force throughout the subsection. We also do not assume that $({\cal E},D({\cal E}))$ is conservative from the beginning.
\begin{theo}{\bf (Starting from the SDE)}\label{legallweaker1} 
Let $(l_k)_{k\in\mathbb{Z}}, (r_k)_{k\in\mathbb{Z}}$ be a partition of $\mathbb{R}$ as described at the beginning of section \ref{two}.
Let $(\alpha_k)_{k\in\mathbb{Z}}, (\overline{\alpha}_k)_{k\in\mathbb{Z}}\subset (0,1)$. Suppose
\begin{equation}\label{legallcond1}\tag{LGloc}
\sum_{k\ge 0}|2\alpha_k-1|+\sum_{k\le 0}|2\overline{\alpha}_k-1|<\infty
\end{equation}
and that (\ref{global1}), (\ref{global2}) are satisfied for $(\gamma_k)_{k\in \mathbbm{Z}}$,
$(\overline{\gamma}_k)_{k\in \mathbbm{Z}}$ given by (\ref{alphagamma2}), (\ref{alphagamma1}). Then for any $\alpha\in (0,1)$ there exists 
a unique strong solution to (\ref{weak3}).
\end{theo}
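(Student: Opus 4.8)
The plan is to derive Theorem \ref{legallweaker1} from Theorem \ref{uniqueness} by building, out of the data $(l_k),(r_k),(\alpha_k),(\overline{\alpha}_k)$ and the target $\alpha\in(0,1)$, a density $\rho$ of the form (\ref{gammadef}) whose associated Dirichlet form $({\cal E},D({\cal E}))$ satisfies (\ref{semi1}), (\ref{hbddvargamma}), (H0) and (H1), and for which the (unique strong) solution to (\ref{weak3}) produced by Theorem \ref{uniqueness} carries exactly the prescribed skewness parameters.

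First I would set up the density. Define $(\gamma_k)_{k\in\mathbb{Z}}$, $(\overline{\gamma}_k)_{k\in\mathbb{Z}}$ by (\ref{alphagamma2}), (\ref{alphagamma1}) with base values $\gamma_0,\overline{\gamma}_0>0$ to be fixed at the very end. Since $\gamma_{k+1}/\gamma_k=\alpha_k/(1-\alpha_k)$ and $\overline{\gamma}_{k}/\overline{\gamma}_{k+1}=\overline{\alpha}_k/(1-\overline{\alpha}_k)$, the hypothesis (\ref{legallcond1}) is equivalent to the absolute convergence of $\sum_{k\ge0}\log\frac{\alpha_k}{1-\alpha_k}$ and $\sum_{k\le0}\log\frac{1-\overline{\alpha}_k}{\overline{\alpha}_k}$; hence the relevant partial products converge and $\gamma:=\lim_{k\to\infty}\gamma_k$, $\overline{\gamma}:=\lim_{k\to-\infty}\overline{\gamma}_k$ exist in $(0,\infty)$, with $\gamma,\overline{\gamma}>0$ and with $\alpha_k,\overline{\alpha}_k$ unaffected by the choice of $\gamma_0,\overline{\gamma}_0$. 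From $|\gamma_{k+1}-\gamma_k|=\gamma_k|2\alpha_k-1|/(1-\alpha_k)$, the boundedness of $(\gamma_k)_{k\ge0}$ and of $(1-\alpha_k)^{-1}$ gives (\ref{semi1}); then $|1/\gamma_k-1/\gamma_{k+1}|\le C|\gamma_{k+1}-\gamma_k|$ (using that $\gamma_k$ is bounded away from $0$) gives (\ref{hbddvargamma}); the same estimates apply to the $\overline{\gamma}$-sums. This is essentially the implication announced in Remark \ref{leGallweaker3}(ii). Consequently $\rho$ in (\ref{gammadef}) lies in $L^1_{loc}(\mathbb{R};dx)$ with $1/\rho\in L^1_{loc}(\mathbb{R};dx)$, the bilinear form (\ref{closureDF}) is closable and its closure $({\cal E},D({\cal E}))$ is a regular Dirichlet form satisfying (H1) by Remark \ref{caph1}.

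Next I would establish (H0). By hypothesis the global conditions (\ref{global1}), (\ref{global2}) hold for the $(\gamma_k),(\overline{\gamma}_k)$ just constructed. Since (\ref{semi1}) and (\ref{hbddvargamma}) force $(\gamma_k)_{k>-n_0}$ and $(\overline{\gamma}_k)_{k<n_0}$ to be bounded above and below by strictly positive constants for every $n_0\in\mathbb{N}$, and since $\sum_{l<n_0}(r_{l+1}-r_l)=r_{n_0}$, $\sum_{k>-n_0}(l_{k+1}-l_k)=l_{-n_0+1}$, Remark \ref{conservativeglobalprop} shows that (\ref{global1}), (\ref{global2}) are equivalent to the two telescoping conditions of Corollary \ref{conservativeness2}, i.e.\ to $\Phi(-\infty)=\Phi(\infty)=\infty$. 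By Proposition \ref{conservativenessdir} (Feller's test, $(i)\Leftrightarrow(ii)$) this is precisely conservativeness of $((X_t)_{t\ge0},(P_x)_{x\in\mathbb{R}})$, i.e.\ (H0) holds.

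Finally, (\ref{semi1}), (\ref{hbddvargamma}) and conservativeness being in force, Theorem \ref{uniqueness} yields a unique strong solution to (\ref{weak3}) (and to (\ref{weak1})). By Corollary \ref{countablyskewbm}(i) this solution solves (\ref{weak3}) with skewness parameters $\widehat{\alpha}_k=\gamma_{k+1}/(\gamma_{k+1}+\gamma_k)$, $\widehat{\overline{\alpha}}_k=\overline{\gamma}_{k+1}/(\overline{\gamma}_{k+1}+\overline{\gamma}_k)$, $\widehat{\alpha}=\overline{\gamma}/(\overline{\gamma}+\gamma)$; but (\ref{alphagamma2}), (\ref{alphagamma1}) are designed exactly so that $\widehat{\alpha}_k=\alpha_k$, $\widehat{\overline{\alpha}}_k=\overline{\alpha}_k$ for every $k$, regardless of $\gamma_0,\overline{\gamma}_0$. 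Since $\gamma,\overline{\gamma}>0$, Remark \ref{foranyalpha} shows that $\widehat{\alpha}$ can be made equal to any prescribed $\alpha\in(0,1)$ by a suitable choice of $\gamma_0,\overline{\gamma}_0$, which does not alter $\alpha_k,\overline{\alpha}_k$. This produces the desired unique strong solution to (\ref{weak3}) and finishes the proof. The main obstacle is the first paragraph: converting the crude summability hypothesis (\ref{legallcond1}) into the structural conditions (\ref{semi1}), (\ref{hbddvargamma}) together with $\gamma,\overline{\gamma}\in(0,\infty)$, and checking that the reduction of (\ref{global1}), (\ref{global2}) to the conditions of Corollary \ref{conservativeness2} via Remark \ref{conservativeglobalprop} is legitimate (in particular independent of $n_0$) once the $\gamma$'s have been pinned down; everything beyond that is a direct invocation of the results already proved.
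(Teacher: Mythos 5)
Your proposal is correct and follows essentially the same route as the paper's own proof: reduce (\ref{legallcond1}) to (\ref{semi1}) and (\ref{hbddvargamma}) via convergence of the products (\ref{alphagamma2}), (\ref{alphagamma1}) (the paper does this with the power-series expansion of $\log(1+z)$, you with the equivalent log-sum convergence plus the identities for $|\gamma_{k+1}-\gamma_k|$), then get (H1) from Remark \ref{caph1}, (H0) from Remark \ref{conservativeglobalprop}, existence from Proposition \ref{legallweaker0}/Corollary \ref{countablyskewbm}(i) with Remark \ref{foranyalpha}, and uniqueness from Theorem \ref{uniqueness}. The only blemish is the harmless slip $\overline{\gamma}_k/\overline{\gamma}_{k+1}=\overline{\alpha}_k/(1-\overline{\alpha}_k)$ (it should be the reciprocal), which does not affect the absolute-convergence argument.
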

\begin{proof}
Assume we can show (\ref{semi1}) and (\ref{hbddvargamma}) for $(\gamma_k)_{k\in \mathbbm{Z}}$,
$(\overline{\gamma}_k)_{k\in \mathbbm{Z}}$ given by (\ref{alphagamma2}), (\ref{alphagamma1}). Then $\exists\lim_{k\to \infty}\gamma_k, \exists\lim_{k\to -\infty}\overline{\gamma}_k>0$ and so (H1) holds by Remark \ref{caph1} for the regular Dirichlet form $({\cal E}, D({\cal E}))$ corresponding to $\rho$ in (\ref{gammadef}) with the above data. Conditions (\ref{global1}), (\ref{global2}) are equivalent to (H0) according to Remark \ref{conservativeglobalprop}. Hence we obtain existence of a solution to (\ref{weak3}) for any $\alpha\in (0,1)$ by Proposition \ref{legallweaker0} and Remark \ref{foranyalpha}. Strong uniqueness then follows from Theorem \ref{uniqueness}.\\
Now, we show that (\ref{semi1}) and (\ref{hbddvargamma}) hold. By symmetry it is enough to show that $\sum_{k\ge 0}|2\alpha_k-1|<\infty$ implies $\sum_{k\ge 0}\left (|\gamma_{k+1}-\gamma_k|+\left |\frac{1}{\gamma_{k+1}}-\frac{1}{\gamma_k}\right |\right )<\infty$. We have 
$$
\gamma_k=\gamma_0\prod_{j=0}^{k-1}\left ( 1+\beta_j\right ), \ \mbox{ where }\ \beta_j:= \frac{2\alpha_j-1}{1-\alpha_j}, \ j\ge 0.
$$
Since $\sum_{k\ge 0}|2\alpha_k-1|<\infty$ it follows easily $\sum_{k\ge 0}|\beta_k|<\infty$. Let $N\in \mathbbm{N}$ be such that $|\beta_k|<1$ for all $k\ge N$. For $|z|<1$ we have
\begin{eqnarray*}
\log(1+z)=z+z^2\underbrace{\left (-\frac12+\frac{z}{3}-\frac{z^2}{4}+...\right )}_{:=f(z)}
\end{eqnarray*}
and $f$ is continuous at $0$ with $\lim_{z\to 0}f(z)=-\frac12$. Thus $(f(\beta_k))_{k\ge N}$ converges to $-\frac12$ and is therefore bounded. It follows that $\sum_{k\ge N}\beta_k^2 f(\beta_k)$ converges absolutely.
Since $\log(1+\beta_j)=\beta_j+\beta_j^2 f(\beta_j)$ for $j\ge N$, we have that $\sum_{j\ge N}\log(1+\beta_j)$ converges absolutely. In particular $(\gamma_k)_{k\ge 0}$ converges. But then 
$$
\sum_{k\ge 0}|\gamma_{k+1}-\gamma_k|=\sum_{k\ge 0}|2\alpha_k-1|(\gamma_{k+1}+\gamma_k)<\infty.
$$
Since
$$
\frac{1}{\gamma_k}=\frac{1}{\gamma_0}\prod_{j=0}^{k-1}\frac{1-\alpha_j}{\alpha_j}=
\frac{1}{\gamma_0}\prod_{j=0}^{k-1}\left (1+\frac{1-2\alpha_j}{\alpha_j}\right ),
$$
we obtain similarly that $\left (\frac{1}{\gamma_k}\right )_{k\ge 0}$ converges and then
$$
\sum_{k\ge 0}\left |\frac{1}{\gamma_{k+1}}-\frac{1}{\gamma_k}\right |
=\frac{1}{\gamma_0}\sum_{k\ge 0}\left |\prod_{j=0}^{k-1}\left (\frac{1-\alpha_j}{\alpha_j}\right )\left (\frac{1-\alpha_k}{\alpha_k}-1\right )\right |
=\frac{1}{\gamma_0}\sum_{k\ge 0}\frac{1}{\gamma_k}\left |\frac{1-2\alpha_k}{\alpha_k}\right |<\infty.
$$
\end{proof}\\ \\
In \cite{lg84} Le Gall considered equations of type
\begin{eqnarray}\label{legall0}
X_t=x + \int_0^t \sigma(X_t)dW_t + \int_{\mathbb{R}} \ell_t^a(X)\mu(da)
\end{eqnarray}
where $\sigma$ is of bounded variation, bounded away from zero and right continuous, and $\mu$ is a signed measure of bounded total variation such that
$|\mu(\{a\})|<1$ for any $a\in \mathbb{R}$. Under these global assumptions (that imply in particular conservativeness) Le Gall showed weak existence and pathwise uniqueness for (\ref{legall0}). Hence by Le Gall's results we know that weak existence and pathwise uniqueness holds for (\ref{dist}), if (for $(\alpha_k)_{k\in \mathbb{Z}}$ as in (\ref{dist}))
\begin{eqnarray}\label{legall1}
\sum_{k\in \mathbb{Z}}|2\alpha_k-1|<\infty.
\end{eqnarray}
Le Gall's results do not cover in whole generality equation (\ref{dist}), since in equation (\ref{dist}) no assumption on the finiteness of $\sum_{k\in \mathbb{Z}}|2\alpha_k-1|$ is made. On the other hand, the results in \cite{lg84} allow for an accumulation point of the sequence $(z_k)_{k\in \mathbb{Z}}$ in (\ref{dist}), because (\ref{legall1}) is sufficient for weak existence and pathwise uniqueness of (\ref{legall0}) with $\sigma\equiv 1$. 
But (\ref{legall1}) is qualitatively stronger than our assumptions in Theorems \ref{uniqueness} and \ref{legallweaker1} as we explain in the following remark. 
\begin{rem}\label{leGallweaker3}
(i) If (\ref{legallcond1}) holds globally, i.e. if
\begin{equation}\label{legallcond2}\tag{LG}
\sum_{k\in \mathbb{Z}}(|2\alpha_k-1|+|2\overline{\alpha}_k-1|)<\infty,
\end{equation}
then (\ref{global1}), (\ref{global2}) automatically hold, because $(\gamma_k)_{k\in \mathbbm{Z}}$,
$(\overline{\gamma}_k)_{k\in \mathbbm{Z}}$ are bounded below and above by strictly positive constants. 
Indeed, this can be shown exactly as in the proof of Theorem \ref{legallweaker1}. 
Hence we recover qualitatively Le Gall's strong uniqueness results according to (\ref{legall1}) by Theorem \ref{legallweaker1}.
Here, we used the word \lq\lq qualitatively\rq\rq\ because of the following. Condition (\ref{legall1}) covers also the case of multiple accumulation points, as long as only the sum in (\ref{legall1}) remains finite. However, we could have considered this situation even with no finiteness condition on the sums in a straightforward manner. But since apart from notational complication no new phenomena will occur locally by considering even countably many isolated accumulation points, 
we excluded the case of multiple accumulation points for the convenience of the reader. \\
(ii) It can be seen from the proof of Theorem \ref{legallweaker1} that (\ref{semi1}) together with (\ref{hbddvargamma}) are equivalent to (\ref{legallcond1}) and then under either one of these equivalent assumptions, (H0) is equivalent to
(\ref{global1}), (\ref{global2}) for $(\gamma_k)_{k\in \mathbbm{Z}}$,
$(\overline{\gamma}_k)_{k\in \mathbbm{Z}}$ given by (\ref{alphagamma2}), (\ref{alphagamma1})
(cf. Proposition \ref{conservativenessdir}, Remark \ref{conservativeglobalprop}). 
Therefore, the assumptions of Theorem \ref{uniqueness} and Theorem \ref{legallweaker1} are equivalent. But Le Gall's global condition  (\ref{legallcond2}) is stronger than our assumptions in Theorem \ref{legallweaker1}. One can say that the assumptions in Theorem \ref{legallweaker1} consist of two types of assumptions. A local assumption (\ref{legallcond1}), to ensure pathwise uniqueness, and a global assumption (\ref{global1}) together with (\ref{global2}) to ensure non-explosion of the solution. Indeed, our strategy is similar to the one used in \cite{lg84}. With the help of a nice function, we transform our equation into a local martingale (see (\ref{transformation})) and then 
obtain uniqueness (cf. proof of Theorem \ref{uniqueness}). Since our assumptions are only local, we need some global control, i.e. non-explosion. This is our additional contribution to the work of Le Gall in \cite{lg84}.\\
\end{rem}

\subsection{Recurrence and transience}
In this subsection, we assume throughout that (\ref{semi1}) and (\ref{hbddvargamma}) hold. We define 
$$
D_y :=\inf \{t\ge0\,|\,X_t=y\},\ \ \ y\in \mathbb{R}.
$$
Under the assumptions (\ref{semi1}) and (\ref{hbddvargamma}), the scale function $h$ always exists. Therefore, exactly as in \cite[Chapter 6, Lemma (3.1)]{d} we can show that
\begin{eqnarray}\label{rec0}
P_x(D_a \wedge D_b<\infty)=1, \ \ \ \forall x\in (a,b).
\end{eqnarray}
It follows in particular that $((h(X_{t\wedge D_a \wedge D_b}))_{t\ge 0}, (P_x)_{x\in\mathbb{R}})$, with $h$ like in Lemma \ref{algebraic0}, is a uniformly bounded local 
martingale and by standard calculations it is well-known that for any $x\in (a,b)$
\begin{eqnarray}\label{rec1}
P_x(D_a < D_b)& = & \frac{h(b)-h(x)}{h(b)-h(a)} \\ \nonumber
\end{eqnarray}
and
\begin{eqnarray}\label{rec2}
P_x(D_b < D_a)& = & \frac{h(x)-h(a)}{h(b)-h(a)}. \\ \nonumber
\end{eqnarray}
\begin{theo}\label{recurrence}
The following are equivalent:
\begin{itemize}
 \item[(i)] $((X_t)_{t\ge 0}, (P_x)_{x\in\mathbb{R}})$ is recurrent, i.e. $P_x(D_y<\infty)=1\ \ \forall x,y\in\mathbb{R}$.
\item[(ii)] $h(-\infty)=-\infty$ and $h(\infty)=\infty$.
\item[(iii)] $\sum_{k\in\mathbb{Z}}\frac{l_{k+1}-l_k}{\gamma_{k+1}}=\infty$ and $\sum_{k\in\mathbb{Z}}\frac{r_{k+1}-r_k}{\overline{\gamma}_{k+1}}=\infty$.
\item[(iv)] $\int_{-\infty}^0 \frac{1}{\rho(x)}dx=\infty$ and  $\int_0^{\infty} \frac{1}{\rho(x)}dx=\infty$.
\item[(v)] There exist $u_n\in D(\cal{E})$, $n\ge 1$, $0\le u_n\nearrow 1$ $dx$-a.e. as $n\to \infty$ such that ${\cal{E}}(u_n,u_n)\to 0$ as $n\to \infty$.
\end{itemize}
\end{theo}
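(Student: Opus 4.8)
The plan is to run the equivalences by first noting that (ii), (iii), (iv) are literally the same statement rewritten, then linking them to the probabilistic recurrence (i) through the one--dimensional diffusion calculus on natural scale, and finally recognizing (v) as the Dirichlet--form version of recurrence. For (ii) $\Leftrightarrow$ (iii) $\Leftrightarrow$ (iv) I would simply compute $h(\pm\infty)$ from the definition of $h$: since $h(0)=0$, $h$ is piecewise linear with slope $\frac{\alpha\gamma}{\gamma_{k+1}}$ on $(l_k,l_{k+1})$ and slope $\frac{(1-\alpha)\overline{\gamma}}{\overline{\gamma}_{k+1}}$ on $(r_k,r_{k+1})$, and $r_k\downarrow 0$ as $k\to-\infty$, $r_k\uparrow\infty$ as $k\to\infty$ (likewise for $l_k$), so summing the increments gives
$$
h(\infty)=(1-\alpha)\overline{\gamma}\sum_{k\in\mathbb{Z}}\frac{r_{k+1}-r_k}{\overline{\gamma}_{k+1}}=(1-\alpha)\overline{\gamma}\int_0^{\infty}\frac{dx}{\rho(x)},\qquad h(-\infty)=-\alpha\gamma\sum_{k\in\mathbb{Z}}\frac{l_{k+1}-l_k}{\gamma_{k+1}}=-\alpha\gamma\int_{-\infty}^{0}\frac{dx}{\rho(x)},
$$
where we used $\rho\equiv\overline{\gamma}_{k+1}$ on $(r_k,r_{k+1})$ and $\rho\equiv\gamma_{k+1}$ on $(l_k,l_{k+1})$. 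Since $\alpha\in(0,1)$ and $\gamma,\overline{\gamma}>0$ by the standing assumptions of this subsection, the prefactors lie in $(0,\infty)$, so (ii), (iii) and (iv) all amount to the assertion that both displayed quantities equal $+\infty$.

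Next, for (i) $\Leftrightarrow$ (ii) I would first observe that either of (i), (ii) forces conservativeness. Indeed, $h(\pm\infty)=\pm\infty$ gives $\Phi(x)\ge\bigl(h(x)-h(1)\bigr)\int_0^1\frac{dy}{h'(y)}\to\infty$ as $x\to\infty$, and similarly at $-\infty$, so $\Phi(\pm\infty)=\infty$ and Proposition \ref{conservativenessdir} applies; while if (i) holds the process oscillates between two fixed points at an unbounded sequence of times, hence $\zeta=\infty$. Working then with the conservative process, $Y_t:=h(X_t)$ is by Lemma \ref{algebraic0dir} a continuous local martingale on $[0,\infty)$ with strictly increasing quadratic variation, taking values in $(h(-\infty),h(\infty))$; thus $X$ is point--recurrent iff $Y$ is, which by the classical dichotomy for diffusions in natural scale (carried out exactly as in \cite[Chapter 6]{d} from (\ref{rec0}), (\ref{rec1}), (\ref{rec2})) holds iff $h(-\infty)=-\infty$ and $h(\infty)=\infty$. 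Concretely: if these hold, letting $b\uparrow\infty$ in $P_x(D_y<D_b)=\frac{h(b)-h(x)}{h(b)-h(y)}$ for $y<x$, and $a\downarrow-\infty$ in $P_x(D_y<D_a)=\frac{h(x)-h(a)}{h(y)-h(a)}$ for $y>x$, yields $P_x(D_y<\infty)=1$; whereas if, say, $h(\infty)<\infty$, the first formula gives $P_x(D_y<\infty)=\frac{h(\infty)-h(x)}{h(\infty)-h(y)}<1$ for $y<x$.

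Finally, for the equivalence with (v): since $\gamma,\overline{\gamma}>0$, (H1) holds by Remark \ref{caph1} and $\mbox{cap}(\{x\})>0$ for every $x\in\mathbb{R}$, and $({\cal E},D({\cal E}))$ is irreducible (a one--dimensional strongly local form with $\rho>0$ a.e.). By the recurrence theory for Dirichlet forms — \cite[Theorem 1.6.3]{fot} for the characterization via $u_n$, and \cite[Theorem 4.6.6]{fot} for the process — the existence of $u_n\in D({\cal E})$ with $0\le u_n\nearrow 1$ $dx$-a.e. and ${\cal E}(u_n,u_n)\to 0$ is equivalent to recurrence of $({\cal E},D({\cal E}))$, which, the form being irreducible with every point of positive capacity, is in turn equivalent to $P_x(D_y<\infty)=1$ for all $x,y\in\mathbb{R}$, i.e. to (i). Alternatively, one can prove (iv)$\Rightarrow$(v) directly: choose $m(n)>n$ with $\int_{r_n}^{r_{m(n)}}\frac{dx}{\rho(x)}\ge n$ and $\int_{l_{-m(n)}}^{l_{-n}}\frac{dx}{\rho(x)}\ge n$, let $u_n$ be $1$ on $[l_{-n},r_n]$, affine in the scale coordinate $h$ from $1$ down to $0$ on $[r_n,r_{m(n)}]$ and on $[l_{-m(n)},l_{-n}]$, and $0$ elsewhere; then a direct computation gives ${\cal E}(u_n,u_n)\le\frac1n$, and monotonicity is obtained by replacing $u_n$ with $\max(u_1,\dots,u_n)$. \textbf{The main obstacle} is precisely this last equivalence: one must ensure the process is not killed before comparing it with Brownian motion on $(h(-\infty),h(\infty))$ — which the reduction to $\Phi(\pm\infty)=\infty$ takes care of — and one must upgrade ``recurrence of $({\cal E},D({\cal E}))$'' from a quasi--everywhere statement to the pointwise statement (i) valid \emph{for all} $x,y$, which is exactly where (H1) and $\mbox{cap}(\{x\})>0$ for every $x$ are needed.
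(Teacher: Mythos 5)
Your proposal is correct and follows essentially the same route as the paper: (ii)$\Leftrightarrow$(iii)$\Leftrightarrow$(iv) by direct computation of $h(\pm\infty)$, (i)$\Leftrightarrow$(ii) via the hitting probabilities (\ref{rec1})--(\ref{rec2}) with $D_a\to\infty$ as $a\to\pm\infty$, and the link to (v) through the Dirichlet-form recurrence criterion together with an explicit sequence $u_n$ that is affine in the scale coordinate (the paper's $u_n$ uses $\int 1/\rho$, which is the same up to constants on each half-line). The only rough edge is your monotonization step: ${\cal E}(\max(u_1,\dots,u_n),\max(u_1,\dots,u_n))$ is controlled only by $\sum_{j\le n}{\cal E}(u_j,u_j)$, which need not tend to $0$ — but this is harmless, since choosing $m(n)$ so that $h(r_{m(n)})-h(r_n)$ and $h(l_{-n})-h(l_{-m(n)})$ increase in $n$ (as in the paper's choice $m(n)=n$ with $a_n,b_n$ increasing) already makes the $u_n$ themselves nondecreasing.
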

\begin{proof}
(H1) implies that $P_x$-a.s. $D_a \to \infty$ as $a \to + \infty$ or $a\to -\infty$. Hence by (\ref{rec1}) $h(\infty)=\infty$ is equivalent to  $P_x(D_a<\infty)=1$ for any $x\in (a,b)$, and 
by (\ref{rec2}) $h(-\infty)=-\infty$ is equivalent to  $P_x(D_b<\infty)=1$ for any $x\in (a,b)$. This is clearly equivalent to the recurrence of $((X_t)_{t\ge 0}, (P_x)_{x\in\mathbb{R}})$, 
hence $(i)\Leftrightarrow (ii)$. $(ii)\Leftrightarrow (iii)\Leftrightarrow (iv)$ is obvious. $(iv)\Rightarrow (v)$ is a special case of \cite[Theorem 2.2 (i)]{GiTr13}. For the reader's convenience, we include the proof. Let $a_n:=\int_{-n}^{0} \frac{1}{\rho(x)}dx$, $b_n:=\int_{0}^{n} \frac{1}{\rho(x)}dx$ and for $n\ge 1$

\[u_n(x):= \left\{ \begin{array}{r@{\quad}l}
 1-\frac{1}{a_n}\int_{x}^{0} \frac{1}{\rho(x)}dx&\mbox{if }\ x\in[-n,0], \\ 
 1-\frac{1}{b_n}\int_{0}^{x} \frac{1}{\rho(x)}dx&\mbox{if }\ x\in [0,n],\\ 
 0&\mbox{elsewhere}. \end{array} \right. \] \\
Clearly $0\le u_n\nearrow 1$ $dx$-a.e. as $n\to \infty$. Fix a standard Dirac sequence $(\varphi_{\varepsilon})_{\varepsilon>0}$ and define 
$u_n^k:=\varphi_{\frac{1}{k}}*u_n$, $k\ge 1$. Then  $u_n^k\in C_0^{\infty}(\mathbb{R})$ and by standard properties of the convolution product 
one can easily see that $u_n^k \to u_n$ in $D({\cal E})$ as $k\to \infty$. Hence $u_n\in D({\cal E})$. Furthermore
$$
{\cal E}(u_n,u_n)=\frac{1}{2}\int_{-n}^0\frac{1}{a_n^2}\frac{1}{\rho(x)}dx+\frac{1}{2}\int_{0}^n\frac{1}{b_n^2}\frac{1}{\rho(x)}dx=
\frac{1}{2}\left (\frac{1}{a_n}+\frac{1}{b_n}\right )\to 0
$$
as $n\to \infty$. $(v)\Rightarrow (i)$ is well known (see e.g. \cite{fot}).
\end{proof}
\begin{lem}\label{strongergodicity}
Let one of the conditions of Theorem \ref{recurrence} be satisfied. Let $(\theta_t)_{t\ge 0}$ be the shift operator of $((X_t)_{t\ge 0}, (P_x)_{x\in\mathbb{R}})$. 
Then for any $x,y\in \mathbb{R}$
\begin{eqnarray}
\lim_{t\to\infty}\sup_{A\in \cal{F}}|P_x\circ \theta_t^{-1}(A)-P_y\circ \theta_t^{-1}(A)|=0. 
\end{eqnarray}
\end{lem}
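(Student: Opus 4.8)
The plan is to prove this loss-of-memory statement by a coalescent coupling. Recall that $P_z\circ\theta_t^{-1}$ is the law on path space of $(X_{t+s})_{s\ge 0}$ started at $z$. Suppose we have constructed, on one probability space with expectation $E$, processes $\widehat X\sim P_x$, $\widehat X'\sim P_y$ and a random time $T$ with $T<\infty$ a.s.\ and $\widehat X_{T+s}=\widehat X'_{T+s}$ for all $s\ge 0$. Then on $\{T\le t\}$ the shifted paths $\widehat X_{t+\cdot}$ and $\widehat X'_{t+\cdot}$ coincide, so for every $A\in\mathcal F$
$$
\big|P_x\circ\theta_t^{-1}(A)-P_y\circ\theta_t^{-1}(A)\big|=\Big|E\big[(1_A(\widehat X_{t+\cdot})-1_A(\widehat X'_{t+\cdot}))1_{\{T>t\}}\big]\Big|\le P(T>t),
$$
and the right-hand side is independent of $A$ and tends to $0$ as $t\to\infty$. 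Hence everything reduces to producing such a coupling.

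For the construction I would start from two \emph{independent} copies $\widetilde X\sim P_x$, $\widetilde X'\sim P_y$ on a product space and set $T:=\inf\{s\ge 0:\widetilde X_s=\widetilde X'_s\}$; since $\widetilde X-\widetilde X'$ is continuous this is a stopping time for the augmented joint filtration, with respect to which both $\widetilde X$ and $\widetilde X'$ remain strong Markov. Conditionally on $\mathcal F_T$, draw $(\zeta_s)_{s\ge 0}$ from $P_{\widetilde X_T}$ (independently of the past) and define $\widehat X_s:=\widetilde X_s$, $\widehat X'_s:=\widetilde X'_s$ for $s<T$ and $\widehat X_s:=\widehat X'_s:=\zeta_{s-T}$ for $s\ge T$. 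The strong Markov property at $T$, together with $\widetilde X_T=\widetilde X'_T$, then gives $\widehat X\sim P_x$ and $\widehat X'\sim P_y$, and the two processes agree from time $T$ on by construction. Thus it only remains to show that two independent copies of the diffusion meet in finite time, i.e.\ $P(T<\infty)=1$.

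This is the heart of the matter and I would handle it on natural scale. Under the standing hypotheses (\ref{semi1}), (\ref{hbddvargamma}) and recurrence, recurrence forces conservativeness (by Proposition~\ref{conservativenessdir}, since $h(\pm\infty)=\pm\infty$ from Theorem~\ref{recurrence}(ii) yields $\Phi(\pm\infty)=\infty$) and makes $h:\mathbb R\to\mathbb R$ an increasing homeomorphism; hence, by Lemma~\ref{algebraic0dir}, $Y:=h(\widetilde X)$ and $Y':=h(\widetilde X')$ are continuous local martingales with $\langle Y\rangle_t=\int_0^t h'(\widetilde X_s)^2\,ds$ and similarly for $Y'$, and by independence $D:=Y-Y'$ is a continuous local martingale with $\langle D\rangle=\langle Y\rangle+\langle Y'\rangle$ and $D_0=h(x)-h(y)$. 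Now $h'$ is bounded below by some $c_0>0$ on a fixed bounded interval $I\ni 0$ (near $0$ one has $h'\to\alpha$ from the left and $h'\to 1-\alpha$ from the right, both in $(0,1)$, and only finitely many further values of $h'$ occur on $I$), while $\widetilde X$, being recurrent with $\langle\widetilde X\rangle_t=t$, spends infinite total time in $I$ almost surely: by the occupation times formula $\int_0^\infty 1_I(\widetilde X_s)\,ds=\int_I \ell^z_\infty(\widetilde X)\,dz=\infty$, the local times of a recurrent diffusion being a.s.\ infinite at every level. Therefore $\langle Y\rangle_\infty\ge c_0^2\int_0^\infty 1_I(\widetilde X_s)\,ds=\infty$, and likewise $\langle Y'\rangle_\infty=\infty$, so $\langle D\rangle_\infty=\infty$. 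By the Dambis--Dubins--Schwarz theorem, $D_t=D_0+\beta_{\langle D\rangle_t}$ for a Brownian motion $\beta$ on a possibly enlarged space, and since $\langle D\rangle_t\uparrow\infty$ the process $D$ attains every real value; in particular it hits $0$ at an a.s.\ finite time, which is exactly $T$. I expect the main obstacle to be precisely this identity $\langle D\rangle_\infty=\infty$ — that is, turning recurrence of $\widetilde X$ into the statement that the clock of the natural-scale local martingale runs to infinity; the coupling construction and the tail bound are then routine.
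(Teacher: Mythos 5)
Your argument is correct, but it takes a different (much more self-contained) route than the paper. The paper's entire proof consists of observing that, under any of the conditions of Theorem \ref{recurrence}, $((X_t)_{t\ge 0},(P_x)_{x\in\mathbb{R}})$ is a regular recurrent diffusion in the sense of \cite{ka02} and then invoking \cite[Lemma 23.17]{ka02}, which is exactly this loss-of-memory statement. What you have written is, in effect, a proof of that cited lemma adapted to the present setting: the coalescent coupling of two independent copies at their first meeting time, the coupling inequality $|P_x\circ\theta_t^{-1}(A)-P_y\circ\theta_t^{-1}(A)|\le P(T>t)$, and the reduction of $P(T<\infty)=1$ to the divergence of the quadratic variation of $D=h(\widetilde X)-h(\widetilde X')$ via Lemma \ref{algebraic0dir} and Dambis--Dubins--Schwarz. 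All the individual steps check out: recurrence does force conservativeness (from $h(\pm\infty)=\pm\infty$ one gets $\Phi(\pm\infty)=\infty$, e.g. $\Phi(x)\ge (h(x)-h(1))\int_0^1 h'(y)^{-1}dy$ for $x>1$, so Proposition \ref{conservativenessdir} applies); $h'$ is indeed locally bounded away from zero under (\ref{hbddvargamma}) and $\gamma,\overline\gamma>0$; independence kills the cross-variation so $\langle D\rangle=\langle Y\rangle+\langle Y'\rangle$; and the one step you flag as the crux, $\langle Y\rangle_\infty=\infty$, follows from recurrence exactly as you indicate (alternatively, without invoking infinitude of the local times, one can note that the successive passages from $0$ to $1$ each contribute i.i.d.\ strictly positive occupation time to a fixed compact interval, so $\int_0^\infty 1_I(\widetilde X_s)\,ds=\infty$ a.s.). What the paper's route buys is brevity and reuse of a standard classification (regular recurrent diffusion); what yours buys is a proof that is independent of \cite{ka02} and makes explicit precisely where recurrence, the scale function, and the identity $\langle X\rangle_t=t$ enter.
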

\begin{proof}
By Theorem \ref{recurrence}(i) $((X_t)_{t\ge 0}, {\cal F}, ({\cal F}_t)_{t\ge 0}, \zeta, (P_x)_{x\in \mathbb{R}})$ is a regular, recurrent diffusion in the sense of \cite{ka02}. Therefore the statement follows from
\cite[Lemma 23.17]{ka02}.
\end{proof}
\begin{rem}
The Dirichlet form $({\cal E}, D({\cal E}))$ is irreducible (see \cite{fot} for the definition). 
Therefore, by \cite[Lemma 1.6.4.(iii)]{fot}, it is either recurrent or transient. Thus 
Theorem \ref{recurrence} provides also sharp conditions about transience in the sense of \cite{fot}. 
\end{rem}
Let $(p_t(x,dy))_{t\ge 0}$ be the transition kernels corresponding to $((X_t)_{t\ge 0}, (P_x)_{x\in\mathbb{R}})$. Let $A\subset \mathbb{R}$ be Borel measurable such that $\int_A \rho(x)dx<\infty$. 
Since {\bf (H1)} holds, (we may assume that) $p_t 1_A(x):=p_t(x,A)\in D(\cal{E})$ is continuous in $x$ for any $t>0$. (If not we could choose continuous versions and construct 
a process via Kolmogorov's method. This process would then be indistinguishable form  $((X_t)_{t\ge 0}, (P_x)_{x\in\mathbb{R}})$). 
In particular the transition kernels have a density with respect to reference measure 
$m(dx):=\rho(x)dx$, since $m(A)=0$ implies $p_t(x,A)=0$ for $m$-a.e. $x$, hence every $x$ by continuity and full support of $m$. \\
Let $\cal{B}(\mathbb{R})$ be the Borel $\sigma$-algebra of $\mathbb{R}$. For a positive measure $\mu$ on $(\mathbb{R},\cal{B}(\mathbb{R}))$ and $t>0$, we define
$$
\mu p_t(A):=\int_{\mathbb{R}} p_t(x,A) \mu(dx),\ \ \ A\in \cal{B}(\mathbb{R}).
$$
$\mu$ is called an {\it invariant measure}, if $\mu p_t =\mu$ for any $t>0$. It is called an {\it invariant distribution}, if additionally $\mu(\mathbb{R})=1$. Clearly, the reference measure 
$m=\rho dx$ is invariant since by symmetry of $(p_t)_{t\ge 0}$ with respect to $m$ and conservativeness
$$
mp_t(A)=\int_{\mathbb{R}} p_t 1_A(x) m(dx)=\int_{\mathbb{R}} 1_A(x) P_x(X_t\in \mathbb{R})m(dx)=m(A),\ \ \ t>0.
$$
Suppose $((X_t)_{t\ge 0}, (P_x)_{x\in\mathbb{R}})$ is recurrent. Then $((X_t)_{t\ge 0}, (P_x)_{x\in\mathbb{R}})$ 
is called {\it null-recurrent} if 
$$
\lim_{t\to \infty}p_t 1_K (x)=\lim_{t\to \infty}P_x(X_t \in K)=0
$$ 
for any $x\in\mathbb{R}$ and any compact set 
$K$ with non-empty interior. Otherwise it is called {\it positive recurrent}.\\
It follows from the proof of Theorem \ref{recurrence2} $(iv)\Rightarrow (i)$ below, that if $((X_t)_{t\ge 0}, (P_x)_{x\in\mathbb{R}})$ is recurrent, then it is positive recurrent, if and only if 
$P_x(X_t \in K)$ does not converge to zero as $t\to\infty$ for any $x\in\mathbb{R}$ and any compact set 
$K$ with non-empty interior.
\begin{theo}\label{recurrence2}
Suppose $((X_t)_{t\ge 0}, (P_x)_{x\in\mathbb{R}})$ is recurrent. 
Then the following are equivalent:
\begin{itemize}
 \item[(i)] $((X_t)_{t\ge 0}, (P_x)_{x\in\mathbb{R}})$ is positive recurrent.
\item[(ii)] $\int_{-\infty}^{\infty}\frac{1}{h'(x)}dx<\infty$.
\item[(iii)] The invariant measure $\rho dx$ is finite, i.e. $\sum_{k\in\mathbb{Z}}\{\gamma_{k+1}(l_{k+1}-l_k)+\overline{\gamma}_{k+1}(r_{k+1}-r_k)\}<\infty$.
\item[(iv)] $p_t(x,dy)=P_x(X_t\in \cdot)$ converges weakly to the invariant distribution $\frac{\rho dx}{\int_{\mathbb{R}}\rho(x)dx}$ as $t\to \infty$ for any $x\in \mathbb{R}$.
\item[(v)] $E_x[D_y]<\infty$ $\forall x,y\in\mathbb{R}$.
\end{itemize}
\end{theo}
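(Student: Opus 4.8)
Throughout the argument I would treat $X$ as a regular, recurrent diffusion in the sense of \cite{ka02} (this is already how Lemma~\ref{strongergodicity} is proved), with scale function $h$ and speed measure $\mu(dy)=\frac{2}{h'(y)}dy$, and I would use that recurrence forces conservativeness (cf.\ \cite{fot}), so that, as recorded just before the theorem, the reference measure $m:=\rho\,dx$ is a $\sigma$-finite invariant measure. The whole plan is to show that the finiteness of $m$ is the common content of (i)--(v). The equivalence $(ii)\Leftrightarrow(iii)$ is a direct computation: on $(l_k,l_{k+1})$ one has $h'\equiv\frac{\alpha\gamma}{\gamma_{k+1}}$ and $\rho\equiv\gamma_{k+1}$, while on $(r_k,r_{k+1})$ one has $h'\equiv\frac{(1-\alpha)\overline\gamma}{\overline\gamma_{k+1}}$ and $\rho\equiv\overline\gamma_{k+1}$; since $\alpha=\frac{\overline\gamma}{\gamma+\overline\gamma}$ gives $\alpha\gamma=(1-\alpha)\overline\gamma=\frac{\gamma\overline\gamma}{\gamma+\overline\gamma}>0$, it follows that $\frac{1}{h'}=\frac{\gamma+\overline\gamma}{\gamma\overline\gamma}\,\rho$ a.e., whence $\mu=\frac{2(\gamma+\overline\gamma)}{\gamma\overline\gamma}\,m$ and
\[
\int_{\mathbb{R}}\frac{1}{h'(x)}\,dx=\frac{\gamma+\overline\gamma}{\gamma\overline\gamma}\sum_{k\in\mathbb{Z}}\bigl\{\gamma_{k+1}(l_{k+1}-l_k)+\overline\gamma_{k+1}(r_{k+1}-r_k)\bigr\}.
\]
So (ii) and (iii) both express $\mu(\mathbb{R})<\infty$.

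Next I would close the cycle $(iii)\Rightarrow(iv)\Rightarrow(i)\Rightarrow(ii)$. For $(iii)\Rightarrow(iv)$: let $\pi:=m/m(\mathbb{R})$ be the invariant probability; invariance gives $\int p_t(y,\cdot)\,\pi(dy)=\pi$, while Lemma~\ref{strongergodicity} gives $\sup_{B\in\mathcal{B}(\mathbb{R})}|p_t(x,B)-p_t(y,B)|\to0$ as $t\to\infty$ for all $x,y$, so integrating against $\pi(dy)$ and using dominated convergence yields $\sup_{B\in\mathcal{B}(\mathbb{R})}|p_t(x,B)-\pi(B)|\to0$, which is even stronger than the weak convergence claimed in (iv). For $(iv)\Rightarrow(i)$: weak convergence of $p_t(x,\cdot)$ to $\pi$ together with the full support of $\pi$ gives $\liminf_{t\to\infty}p_t1_K(x)>0$ for any compact $K$ with non-empty interior, so $X$ is not null recurrent, hence positive recurrent. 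Finally $(i)\Rightarrow(ii)$ is the contrapositive of the classical null-recurrence statement for one-dimensional diffusions: if $\mu(\mathbb{R})=\infty$ then $p_t1_K(x)\to0$ for every such $K$ and every $x$ (see \cite{ka02}, \cite{d}; alternatively this follows from $L^2(m)$-ergodicity of the symmetric semigroup, whose generator has trivial kernel when the invariant measure is infinite). This yields $(i)\Leftrightarrow(ii)\Leftrightarrow(iii)\Leftrightarrow(iv)$.

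It then remains to add $(iii)\Leftrightarrow(v)$. Since $X$ has a good scale function and speed measure, the mean hitting times admit the usual Green-function representation: for $x<y$ the Green function of $X$ killed at $y$ on $(-\infty,y)$ equals, up to a positive constant, $G_y(x,z)=h(y)-h(x\vee z)$, and hence by the occupation-time formula $E_x[D_y]$ equals, up to the same constant, $(h(y)-h(x))\,m((-\infty,x])+\int_{(x,y)}(h(y)-h(z))\,m(dz)$, with the symmetric formula for $x>y$. As $h$ is continuous and $m$ locally finite, the integral term is always finite, so $E_x[D_y]<\infty$ for all $x,y\in\mathbb{R}$ holds if and only if $m((-\infty,a])<\infty$ and $m([a,\infty))<\infty$ for every $a$, i.e.\ if and only if $m(\mathbb{R})<\infty$, i.e.\ (iii).

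The only genuinely non-elementary ingredients are the two classical one-dimensional diffusion facts — the null-recurrence statement used in $(i)\Rightarrow(ii)$ and the Green-function expression for $E_x[D_y]$ — which I would either quote from \cite{ka02}, \cite{d} or re-derive from the explicit $h$ and $\mu$ above; and the passage from abstract ergodicity to the pointwise (in fact total-variation) convergence of the transition kernels in (iv), which is precisely what Lemma~\ref{strongergodicity} supplies. Everything else, in particular $(ii)\Leftrightarrow(iii)$, is the short computation indicated here, so I expect the write-up to be short.
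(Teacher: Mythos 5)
Your proof is correct, and it reorganizes the equivalences in a way that differs from the paper at the two places where real work is needed. The paper disposes of $(ii)\Leftrightarrow(iii)$ as ``obvious'' (your computation $\frac{1}{h'}=\frac{\gamma+\overline{\gamma}}{\gamma\overline{\gamma}}\,\rho$ a.e., using $\alpha\gamma=(1-\alpha)\overline{\gamma}$, is exactly the content), quotes Mandl for $(ii)\Leftrightarrow(iv)$ and Karatzas--Shreve/Mandl for $(ii)\Leftrightarrow(v)$, and then invests its effort in a self-contained proof of $(i)\Rightarrow(iv)$: Helly's theorem produces a subprobability limit $\mu$ along a subsequence, Lemma \ref{strongergodicity} makes the limit independent of the starting point, and a Portemanteau/Fatou argument combined with the symmetry of $p_t$ with respect to $\rho\,dx$ identifies $\mu$ with the normalized invariant measure and forces $\int\rho\,dx<\infty$. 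You instead prove $(iii)\Rightarrow(iv)$ directly by integrating the uniform ergodicity of Lemma \ref{strongergodicity} against the invariant probability $\pi$, which yields total variation convergence --- stronger than the weak convergence asserted in $(iv)$ and giving Corollary \ref{uniqueinvariantdist} for free --- and you close the loop with $(i)\Rightarrow(ii)$ via the classical dichotomy that an infinite speed measure forces $p_t\mathbbm{1}_K(x)\to0$. That last step is the one genuinely quoted ingredient in your cycle; it is on the same footing as the paper's own citation of Mandl IV.4 Theorem 7 (which is precisely this dichotomy in density form), and it is legitimate here because $X$ is a regular recurrent diffusion with scale $h$ and speed measure $\frac{2}{h'}\,dx$. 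Likewise your Green-function derivation of $(iii)\Leftrightarrow(v)$, $E_x[D_y]\asymp(h(y)-h(x))\,m((-\infty,x])+\int_{(x,y)}(h(y)-h(z))\,m(dz)$, re-derives what the paper cites from \cite[Exercise 5.40]{ks91} and \cite{man66}. In short: the paper's route minimizes reliance on the null-recurrence half of the classical theory at the cost of the Helly/Fatou argument; yours minimizes that argument at the cost of quoting the dichotomy, and in exchange obtains a quantitatively stronger form of $(iv)$.
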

\begin{proof}
$(ii)\Leftrightarrow (iii)$ is obvious. $(ii)\Leftrightarrow (iv)$ follows from \cite[IV.4. Theorem 7]{man66}. (In order to facilitate comparison we note that the $m$ of \cite{man66} 
writes as $m(s)= \int_{0}^{s}\frac{2}{h'(x)}dx$, and that the $p$ of \cite{man66} is just our $h$). 
$(iv)\Rightarrow (i)$ follows easily from the Portemanteau-Theorem and we may use \cite[Chapter 5.5. D, Exercise 5.40 (i)]{ks91} or \cite[IV.4 (55), IV.3 (46)]{man66}
to obtain $(ii)\Leftrightarrow (v)$.\\
If $(i)$ is satisfied then we can find $t_n\nearrow \infty$ as $n\to \infty$, $x\in \mathbb{R}$, and a compact set 
$K_0$ such that $\inf_{n\ge 1}p_{t_n}(x,K_0)>0$. By Helly's Theorem we can find another subsequence, again denoted by $(t_n)_{n\ge 1}$ and a 
subprobability measure $\mu$, such that
$$
p_{t_n}(x,\cdot)\longrightarrow \mu \ \ \mbox{ weakly as }\ n\to \infty.
$$
The weak convergence holds indeed for any $x\in \mathbb{R}$ by Lemma \ref{strongergodicity}.
Thus for any open set $U$ and any compact set $K$, we have by the Portemanteau-Theorem that 
$\liminf_{n\to\infty}p_{t_n}1_{U}(x)\ge \mu(U)$, and $\limsup_{n\to\infty}p_{t_n}1_{K}(x)\le \mu(K)$ for any $x\in \mathbb{R}$. 
In particular, $\mu(U_0)\ge \mu(K_0)>0$ for any relatively compact (open) set $U_0$ containing $K_0$. 
Then, by Fatou's lemma, conservativeness, and symmetry of $(p_t)_{t\ge 0}$ with respect to $\rho dx$
\begin{eqnarray}\label{U}
\int_{\mathbb{R}} 1_U \rho(x)dx & = & \liminf_{n\to \infty}\int_{\mathbb{R}} p_{t_n}1_U (x)\rho(x)dx\nonumber \\
& \ge & \int_{\mathbb{R}} \liminf_{n\to \infty}p_{t_n}1_U (x)\rho(x)dx\nonumber\\
& \ge & \mu(U)\int_{\mathbb{R}}\rho(x)dx.
\end{eqnarray}
Applying (\ref{U}) with $U=U_0$ we conclude that $\int_{\mathbb{R}}\rho(x)dx<\infty$ and then $\mu(U)\le\frac{\int_{U}\rho(x)dx}{\int_{\mathbb{R}}\rho(x)dx}$ for any open set $U$. Similarly to (\ref{U}) we derive
\begin{eqnarray}\label{K}
\mu(K)\ge\frac{\int_{K}\rho(x)dx}{\int_{\mathbb{R}}\rho(x)dx}
\end{eqnarray}
for any compact set $K$. Hence by inner regularity of the measures it follows $\mu(B)\ge\frac{\int_{B}\rho(x)dx}{\int_{\mathbb{R}}\rho(x)dx}$ 
for any Borel set $B$, which further implies that $\mu=\rho dx$. Since our arguments hold for any subsequence $(t_n)_{n\ge 1}$ it follows
$$
p_{t}(x,\cdot)\longrightarrow \frac{\rho dx}{\int_{\mathbb{R}}\rho(x)dx} 
$$
weakly as $t\to \infty$ for any $x\in \mathbb{R}$. Hence $(i)\Rightarrow (iv)$ and our proof is complete.
\end{proof}
\begin{rem}
Similarly to Remark \ref{conservativeglobalprop} one can see that under (\ref{semi1}) and (\ref{hbddvargamma}), properties (iii), (iv) of Theorem \ref{recurrence} and properties (ii), (iii) of Theorem \ref{recurrence2} are global assumptions and hence do not depend on the local behavior around the accumulation point.
\end{rem}

\begin{cor}\label{uniqueinvariantdist} 
Assume $((X_t)_{t\ge 0}, (P_x)_{x\in\mathbb{R}})$ is positive recurrent. 
Then $\frac{\rho dx}{\int_{\mathbb{R}}\rho(x)dx}$ is the unique invariant distribution.
\end{cor}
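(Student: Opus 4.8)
The plan is to deduce the statement directly from the convergence established in Theorem~\ref{recurrence2}(iv). Write $\pi := \frac{\rho\,dx}{\int_{\mathbb{R}}\rho(x)dx}$ for the candidate invariant distribution, which is well defined and finite under the positive recurrence assumption by Theorem~\ref{recurrence2}(iii). We already know from the discussion preceding Theorem~\ref{recurrence2} that $\rho\,dx$, and hence $\pi$, is invariant; it therefore remains only to show that it is the unique invariant distribution.

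First I would fix an arbitrary invariant distribution $\nu$, i.e. a probability measure on $(\mathbb{R},\mathcal{B}(\mathbb{R}))$ with $\nu p_t=\nu$ for every $t>0$. Since (H1) holds, $x\mapsto p_t(x,A)$ may be taken continuous, in particular Borel measurable, so $\nu p_t$ is well defined and, for every bounded continuous $f:\mathbb{R}\to\mathbb{R}$, Fubini's theorem gives
\[
\int_{\mathbb{R}} f\,d\nu \;=\; \int_{\mathbb{R}} f\,d(\nu p_t) \;=\; \int_{\mathbb{R}} \Big(\int_{\mathbb{R}} f(y)\,p_t(x,dy)\Big)\nu(dx) \;=\; \int_{\mathbb{R}} p_t f(x)\,\nu(dx)
\]
for every $t>0$, where $p_t f(x):=\int_{\mathbb{R}} f(y)\,p_t(x,dy)$.

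Next I would let $t\to\infty$ on the right-hand side. By Theorem~\ref{recurrence2}(iv), $p_t(x,\cdot)\to\pi$ weakly as $t\to\infty$ for \emph{every} $x\in\mathbb{R}$, so $p_t f(x)\to\int_{\mathbb{R}} f\,d\pi$ pointwise in $x$; moreover $|p_t f(x)|\le\|f\|_\infty$, and the constant $\|f\|_\infty$ is $\nu$-integrable because $\nu$ is a probability measure. Dominated convergence therefore yields $\int_{\mathbb{R}} p_t f(x)\,\nu(dx)\to\int_{\mathbb{R}} f\,d\pi$. Since the left-hand side of the displayed identity does not depend on $t$, we conclude $\int_{\mathbb{R}} f\,d\nu=\int_{\mathbb{R}} f\,d\pi$ for all bounded continuous $f$, and hence $\nu=\pi$.

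I do not expect a serious obstacle here: the corollary is essentially a soft consequence of the weak convergence in Theorem~\ref{recurrence2}(iv) together with the finiteness of $\nu$. The only points requiring a little care are the measurability needed to apply Fubini (guaranteed by (H1)) and the fact that the convergence in Theorem~\ref{recurrence2}(iv) holds pointwise in the starting point rather than merely quasi-everywhere — which is precisely why (H1) was imposed throughout this subsection.
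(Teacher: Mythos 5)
Your proof is correct and takes essentially the same route as the paper: both arguments fix an arbitrary invariant distribution $\nu$, write $\int f\,d\nu=\int p_tf\,d\nu$, and let $t\to\infty$ using the everywhere-pointwise weak convergence $p_t(x,\cdot)\to\frac{\rho\,dx}{\int_{\mathbb{R}}\rho(x)dx}$ supplied by Theorem \ref{recurrence2}(iv). The only (cosmetic) difference is that you pass to the limit by dominated convergence against bounded continuous test functions, while the paper reruns the Fatou/Portemanteau estimates (\ref{U}) and (\ref{K}) with $\rho\,dx$ replaced by $\nu$ and concludes via inner regularity.
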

\begin{proof}
Let $\nu$ be an invariant distribution. Then using Theorem \ref{recurrence2}(iv), (\ref{U}), (\ref{K}) with $\rho dx$ replaced by $\nu$, and $\mu$ replaced by $\frac{\rho dx}{\int_{\mathbb{R}}\rho(x)dx}$,
we obtain similarly to the proof of $(i)\Rightarrow (iv)$ in Theorem \ref{recurrence2} that
$$
\nu(B)=\frac{\int_{B}\rho(x)dx}{\int_{\mathbb{R}}\rho(x)dx}
$$
for any Borel set $B$. The result hence follows.
\end{proof}

\subsection{Advection-diffusion in layered media}\label{33}
Let $(l_k)_{k\in\mathbb{Z}}, (r_k)_{k\in\mathbb{Z}}\subset \mathbb{R}$ be as at the beginning of section \ref{two}.
For $\alpha\in (0,1)$ consider the sequences
\begin{eqnarray*}
\gamma_{k+1}:=c_{\alpha}\sqrt{D_k}, \ \ \overline{\gamma}_{k+1}:=\overline{c}_{\alpha}\sqrt{\overline{D}_k}, \ \ k\in \mathbb{Z},
\end{eqnarray*}
where $(D_k)_{k\in\mathbb{Z}}, (\overline{D}_k)_{k\in\mathbb{Z}}\subset (0,\infty)$ and $c_{\alpha}, \overline{c}_{\alpha}>0$ are some constants that will be stated precisely below. We suppose that (\ref{semi1}), (\ref{hbddvargamma}), (\ref{global1}), and (\ref{global2}) hold. Then 
$$
\exists D:=\lim_{k\to\infty}D_k,\ \  \exists \overline{D}:=\lim_{k\to-\infty}\overline{D}_k \   \text{ and }\ D,\overline{D}>0. 
$$ 
Let 
\begin{eqnarray*}
\alpha_k:=\frac{\sqrt{D_k}}{\sqrt{D_k}+\sqrt{D_{k-1}}}, \ \ \overline{\alpha}_k:=\frac{\sqrt{\overline{D}_k}}{\sqrt{\overline{D}_k}+\sqrt{\overline{D}_{k-1}}}, \ \ k\in \mathbb{Z},
\end{eqnarray*}
and define 
$$
c_{\alpha}:=\frac{\alpha}{\sqrt{D}}, \ \ \overline{c}_{\alpha}:=\frac{1-\alpha}{\sqrt{\overline{D}}}.
$$
By Remark \ref{leGallweaker3}(ii), we know that (\ref{semi1}) together with (\ref{hbddvargamma}) are equivalent to (\ref{legallcond1}). Then, by Theorem \ref{legallweaker1} there exists 
a unique strong solution $Z^{\alpha}$ to 
\begin{eqnarray*}
Z^{\alpha}_t=x+W_t+ \sum_{k\in \mathbb{Z}}\left \{(2\alpha_k -1)\ell_t^{l_k}(Z^{\alpha})+(2\overline{\alpha}_k-1)\ell_t^{r_k}(Z^{\alpha})\right \}+(2\alpha-1)\ell_t^0(Z^{\alpha}),
\end{eqnarray*}
which is constructed with the help of the Dirichlet form that is determined by (\ref{gammadef}) and (\ref{closureDF}).\\
We now fix $\alpha\in (0,1)$. By (\ref{semi1}) and (\ref{hbddvargamma}), there exists $\Psi:\mathbb{R}\to \Psi(\mathbb{R})$ which is the difference of two convex functions, (continuous), piecewise linear with $\Psi(0)=0$ such that
\begin{equation*}
        \Psi'(x) = \begin{cases}
                       c_{\alpha}\sqrt{D_k} & \text{on $(l_k,l_{k+1})$}\\
                        \overline{c}_{\alpha}\sqrt{\overline{D}_k} & \text{on $(r_k,r_{k+1})$}.
                      \end{cases}
      \end{equation*}
Applying the It\^o-Tanaka-formula and formulas about local times from \cite{oukrut}, we obtain after a long calculation that $X:=\Psi(Z^{\alpha})$ 
is a strong solution to
\begin{eqnarray}\label{solutionX}
X_t=\Psi(x)+M_t+N_t,
\end{eqnarray}
where
$$
N_t=\sum_{k\in \mathbb{Z}}\left \{\frac{D_k-D_{k-1}}{D_{k}+D_{k-1}}\ell_t^{\Psi(l_k)}(X)+
\frac{\overline{D}_k-\overline{D}_{k-1}}{\overline{D}_{k}+\overline{D}_{k-1}}\ell_t^{\Psi(r_k)}(X)\right \}+\frac{\overline{D}-D}{\overline{D}+D}\ell_t^0(X)
$$
and 
$$
M_t=\int_0^t \sum_{k\in \mathbb{Z}}\left (c_{\alpha}\sqrt{D_k}\, 1_{[\Psi(l_k),\Psi(l_{k+1}))}+ \overline{c}_{\alpha}\sqrt{\overline{D}_k} \,1_{[\Psi(r_k),\Psi(r_{k+1}))}\right)(X_s)dW_s
$$
Note that we do not have to worry about the endpoints of the intervals $(\Psi(l_k),\Psi(l_{k+1}))$ and $(\Psi(r_k),\Psi(r_{k+1}))$ by  (\ref{n3}).
Define
$$
\sigma_1(x):= \sum_{k\in \mathbb{Z}}\left (c_{\alpha}\sqrt{D_k}\, 1_{[\Psi(l_k),\Psi(l_{k+1}))}+ \overline{c}_{\alpha}\sqrt{\overline{D}_k} \,1_{[\Psi(r_k),\Psi(r_{k+1}))}\right).
$$
Then $\sigma_1^2$ is locally uniformly strictly elliptic and so by results of \cite{fot}, we have that 
\begin{eqnarray}\label{solutionXDF}
{\cal A}^0(f,g) := \frac{1}{2}\int_{\Psi(\mathbb{R})} \sigma_1^2 f'g' \,dx, \ \ \  f,g\in C_0^{\infty}(\Psi(\mathbb{R}))
\end{eqnarray}
is closable in $L^2(\Psi(\mathbb{R}); dx)$. Denote the closure by $({\cal A}^0, D({\cal A}^0))$. Following the lines of arguments in this article (as for the Dirichlet form defined through 
(\ref{gammadef}), (\ref{closureDF})) one can verify that the unique solution $X$ to (\ref{solutionX}) is associated to the regular Dirichlet form
$({\cal A}^0, D({\cal A}^0))$. Let $(L^{{\cal A}^0},D(L^{{\cal A}^0}))$ be its generator.\\
In item (ii) of the following remark we point out a minor inconsistency in \cite{ram}. It can however easily be spotted.
\begin{rem}\label{inconsistencies}
(i) The state space of $X$ is $\Psi(\mathbb{R})$ and might be different from $\mathbb{R}$ if $\Psi$ has a low growth rate when approaching to 
$\pm \infty$. As an example consider the case where $r_k=k$ and $\sqrt{\overline{D}_k}=\frac{1}{k^2}$ for $k\ge 1$. \\
(ii) An invariant measure for $X$, is the Lebesgue measure restricted to $\Psi(\mathbb{R})$. This is directly visible from the definition of the corresponding Dirichlet form in (\ref{solutionXDF}). It will be finite, if and only if $\Psi(\mathbb{R})$ is bounded. 
It is evident from (\ref{solutionXDF}) that a scale function $\overline{h}$ for $X$, i.e. a piecewise linear function $\overline{h}:\Psi(\mathbb{R})\to \mathbb{R}$ with $\overline{h}(0)=0$ and ${\cal A}^0(\overline{h},g)=0$ for all $g$ is given by defining its derivatives as
\begin{equation*}
        \overline{h}'(x) = \begin{cases}
                       \frac{1}{c_{\alpha}^2 D_k} & \text{on $(\Psi(l_k),\Psi(l_{k+1}))$}\\
                       \frac{1}{\overline{c}_{\alpha}^2\overline{D}_k} & \text{on $(\Psi(r_k),\Psi(r_{k+1}))$}.
                      \end{cases}
      \end{equation*}
Then exactly as in Theorem \ref{recurrence}, we can see that $X$ is recurrent, if and only if
\begin{eqnarray}\label{recX}
\sum_{k\le 0}\frac{\Psi(l_{k+1})-\Psi(l_k)}{D_k}=\sum_{k\ge 0}\frac{\Psi(r_{k+1})-\Psi(r_k)}{\overline{D}_k}=\infty,
\end{eqnarray}
that is $-\overline{h}(\Psi(-\infty))=\infty=\overline{h}(\Psi(\infty))$. Noting that the speed measure of (\ref{solutionX}) 
is 
$$
\frac{2dx}{\sigma_1^2(x)\overline{h}'(x)}=2dx
$$ 
exactly as in Theorem \ref{recurrence2}, we can see that $X$ is
positive recurrent, if additionally to (\ref{recX})
\begin{eqnarray}\label{rec2X}
\sum_{k\le 0}(\Psi(l_{k+1})-\Psi(l_k))+\sum_{k\ge 0}(\Psi(r_{k+1})-\Psi(r_k))<\infty.
\end{eqnarray}
If (\ref{recX}) and (\ref{rec2X}) hold, then the normalized Lebesgue measure, i.e. the uniform distribution on (the bounded set !) $\Psi(\mathbb{R})$, is the unique invariant probability measure for $X$. 
This statement can be shown analogously to Corollary \ref{uniqueinvariantdist}. In conclusion, in case of an unbounded domain $\Psi(\mathbb{R})$ there is no invariant probability measure possible as stated 
in \cite[Remark 3.2]{ram}. But if $\Psi(\mathbb{R})$ is bounded the normalized Lebesgue measure is the unique invariant distribution 
as opposed to the statement of its non-existence in \cite[Remark 3.2]{ram}.\\
\end{rem}
Take an independent copy $(B_t)_{t\ge 0}$ of  $(W_t)_{t\ge 0}$ and let  $\sigma_2, {\beta}_2:\Psi(\mathbb{R})\to \mathbb{R}$ be locally bounded Borel-measurable functions. Assume additionally that 
$\sigma_2$ is strictly positive and locally uniformly bounded away from zero.
For $y\in \mathbb{R}$ consider the It\^o-process 
\begin{eqnarray*}
Y^y_t:=y+\int_0^t \sigma_2(X_s)dB_s +\int_0^t \beta_2(X_s)ds, \ \ t\ge 0.
\end{eqnarray*}
Clearly, $Y:=(Y^y)_{y\in \mathbb{R}}$ is non-explosive, since the paths of $X$ are continuous. \\
Below, we will show that $(X,Y)$ is associated to a generalized Dirichlet form, stationary and \lq\lq reversible\rq\rq\ 
(see Remark \ref{advection} and in particular \cite{Tr9}) with respect to 
the two dimensional Lebesgue measure $dxdy$. For this, we need some preparations.\\
Let
\begin{equation}
A=\begin{pmatrix}
\sigma_1^2 & 0\\
0& \sigma_2^2
\end{pmatrix}.
\end{equation}
Let $\partial_x f(x,y)$ denote the partial derivative in the $x$-coordinate and $\partial_y f(x,y)$ denote the partial derivative in the $y$-coordinate. 
Since $A$ is locally uniformly strictly elliptic it is well-known, that
\begin{eqnarray}
{\cal E}^0(f,g) &:= &\frac{1}{2}\int_{\Psi(\mathbb{R})}\int_{\mathbb{R}}\langle A\nabla f, \nabla g\rangle dx dy \nonumber \\
& = & \frac{1}{2}\int_{\Psi(\mathbb{R})}\int_{\mathbb{R}}\sigma_1^2\partial_{x}f\partial_x g \,dxdy +\frac{1}{2}\int_{\Psi(\mathbb{R})}\int_{\mathbb{R}}\sigma_2^2\partial_{y}f\partial_y g \,dxdy, 
\end{eqnarray}
with $f,g\in C_0^{\infty}(\Psi(\mathbb{R})\times\mathbb{R})$ is closable in $L^2(\Psi(\mathbb{R})\times\mathbb{R}; dxdy)$ and that the closure $({\cal E}^0, D({\cal E}^0))$ is a regular 
symmetric Dirichlet form. Let $(L^0,D(L^0))$ be the corresponding generator and for ${\cal{D}}\subset L^2(\Psi(\mathbb{R})\times\mathbb{R}; dxdy)$ set 
${\cal{D}}_b:={\cal{D}}\cap L^{\infty}(\Psi(\mathbb{R})\times\mathbb{R}; dxdy)$ and ${\cal{D}}_{0,b}:={\cal{D}}_b\cap \{f\text{ has compact support in } \Psi(\mathbb{R})\times\mathbb{R}  \}$. \\
Since $\beta_2$ only depends on the $x$-coordinate, the vector field $\beta=(\beta_1,\beta_2):\Psi(\mathbb{R})\times\mathbb{R}\to \mathbb{R}^2$, with $\beta_1\equiv 0$ is divergence free with respect to $dxdy$, i.e.
\begin{eqnarray}\label{divfrei}
\int_{\Psi(\mathbb{R})}\int_{\mathbb{R}}\langle \beta, \nabla f\rangle \, dx dy = \int_{\Psi(\mathbb{R})}\int_{\mathbb{R}}\beta_2(x)\partial_{y}f \,dxdy=0, \ \ \ 
\forall f\in C_0^{\infty}(\Psi(\mathbb{R})\times\mathbb{R}). 
\end{eqnarray}
Clearly, (\ref{divfrei}) extends to all $f$ in $D({\cal E}^0)_{0,b}$. Let 
$$
U_n:=(\Psi(l_{-n}), \Psi(r_{n}))\times (-n,n),\ \  n\ge 1.
$$
Then the $U_n$ are relatively compact open subsets of $\Psi(\mathbb{R})\times\mathbb{R}$ and we can consider the part 
Dirichlet forms $({\cal E}^{0,U_n} D({\cal E}^{0, U_n}))$, $n\ge 1$, as given in \cite[Theorem 4.4.3]{fot}.
Let $(L^{0, U_n},D(L^{0, U_n}))$ be the corresponding generators. Furthermore, since  $D({\cal E}^{0, U_n})_b \subset D({\cal E}^0)_{0,b}$, (\ref{divfrei}) 
holds for all $f\in  D({\cal E}^{0, U_n})_b$. Following the line of arguments in \cite{St1}, there exists for each $n\ge 1$, a closed 
extension $(\overline{L}^{U_n},D(\overline{L}^{U_n}))$ on $L^1(U_n;dxdy)$ of
\begin{eqnarray}\label{closedextUn}
L^{U_n}u:=L^{0, U_n}u+\beta_2\partial_y u,\ \ \  u\in  D(L^{0, U_n})_b
\end{eqnarray}
that generates a submarkovian $C_0$-semigroup of contractions on $L^1(U_n;dxdy)$. The part $(L^{U_n},D(L^{U_n}))$ of $(\overline{L}^{U_n},D(\overline{L}^{U_n}))$ on $L^2(U_n;dxdy)$ is then associated to a generalized Dirichlet form (cf. \cite[Section 1a)]{St1}). Then using a localization procedure by following \cite[Section 1b), Theorem 1.5]{St1} one can show, that there exists a closed 
extension $(\overline{L},D(\overline{L}))$ on $L^1(\Psi(\mathbb{R})\times\mathbb{R};dxdy)$ of
\begin{eqnarray}\label{closedext}
Lu:=L^{0}u+\beta_2\partial_y u,\ \ \  u\in  D(L^{0})_{0,b},
\end{eqnarray}
that generates a submarkovian $C_0$-semigroup of contractions on $L^1(\Psi(\mathbb{R})\times\mathbb{R};dxdy)$ and whose resolvent can be approximated by the resolvents of $(\overline{L}^{U_n},D(\overline{L}^{U_n}))$, $n\ge 1$. Note that for this one has to verify that $D(L^{0})_{0,b}\subset L^2(\Psi(\mathbb{R})\times\mathbb{R}; dxdy)$ densely, which holds since $D(L^{0, U_n})_b\subset D(L^{0})_{0,b}$  for any $n$, hence $D(L^{0})_{0,b}$ is dense in 
$L^2(U_n; dxdy)$ for any $n$. Then again analogously to the line of arguments in  \cite{St1}), one shows that the part $(L,D(L))$ of $(\overline{L},D(\overline{L}))$ on $L^2(\Psi(\mathbb{R})\times\mathbb{R};dxdy)$ is associated to a quasi-regular generalized Dirichlet form that has a nice additional structure which is known as condition D3 (for this we refer the interested reader to \cite[Section 4]{Tr4} and references therein). The identification of the associated process can then be performed similarly to what is done in \cite{Tr4} and here.
\begin{rem}\label{advection}
The process $(X,Y)$ has been constructed in \cite{ram} under the stronger additional assumptions that $\sigma_1, \sigma_2$, and  $\frac{\beta_2}{\sigma_2}$ are bounded. In particular $\beta_2$ is then also bounded. The components $X$, (resp. $Y$), represent the transversal, (resp. longitudinal) directions of advection-diffusion in layered media and the fundamental solution corresponding to the underlying Kolmogorov operator serves as a model for the concentration of a solute undergoing advection-diffusion there (see \cite{ram2}, \cite[section 3]{ram}). Having constructed $(X,Y)$ in a more general setting, one can study the asymptotic properties of $X$ and $Y$ as in \cite{ram} and \cite{ram2}. Our generalization for $X$ may be interpreted as increased heterogeneity of the layered media. The weaker assumptions on $\beta_2$ allow higher speed of transportation (advection) of the solute particles in the respective layers. We have seen in Remark \ref{inconsistencies}(ii) that $X$ has a unique invariant distribution if (\ref{recX}) and (\ref{rec2X}) hold. So in this case one may hope to obtain a central limit theorem 
for $(X,Y)$ as in \cite{ram2} (cf. also \cite[Remark 3.2]{ram}), i.e. one may hope to solve the Taylor-Aris problem. Heterogeneous dispersion in a longitudinal flow was carried out in \cite[section 2, 3]{ram2} with respect to a compact domain $G$ with finitely many layers and normal reflecting boundary condition at $\partial G$. In this regard, it could also be interesting to investigate the effect of replacing $G$ with the bounded domain $\Psi(\mathbb{R})$, 
thus allowing increased heterogeneity and inaccessible boundaries. Finally, we note that the (non-sectorial) Lyons-Zheng decomposition holds for $(X,Y)$ (see \cite{Tr9}). Hence one can use it as an additional tool to derive ergodic properties like for instance in \cite{Ta89}.
\end{rem}
{\bf Acknowledgment}: We would like to thank the anonymous referees, especially the second referee, for their valuable suggestions which helped to improve this manuscript significantly.

Youssef Ouknine\\ 
Department of Mathematics\\
Faculty of Sciences Semlalia\\
Cadi Ayyad University\\
Marrakesh, Morocco\\
E-mail: ouknine@uca.ma\\ \\
Francesco Russo\\
Ecole Nationale Sup\'erieure des Techniques Avanc\'ees, ENSTA-ParisTech\\
Unit\'e de Math\'ematiques appliqu\'ees,\\
828, boulevard des Mar\'echaux, \\
F-91120 Palaiseau, France \\
E-mail: francesco.russo@ensta-paristech.fr \\ \\ 
Gerald Trutnau\\
Department of Mathematical Sciences and \\
Research Institute of Mathematics of Seoul National University,\\
San56-1 Shinrim-dong Kwanak-gu, \\
Seoul 151-747, South Korea,  \\
E-mail: trutnau@snu.ac.kr

\end{document}